\newtheorem{thm}{Theorem}[section]
\newtheorem{lem}[thm]{Lemma}
\newtheorem{prop}[thm]{Proposition}
\newtheorem{coro}[thm]{Corollary}
\newtheorem{Def}[thm]{Definition}
\newtheorem{rem}[thm]{Remark}
\def\R{\mathbb{R}}
\def\be{\begin{equation}}
\def\ee{\end{equation}}
\def\g{{\bf g}}
\def\n{{\boldsymbol n}}
\def\p{\partial}
\def\grad{\boldsymbol{\nabla}}
\def\div{\grad\cdot}
\def\O{\Omega}
\def\G{\Gamma}
\def\sig{\sigma}
\def\a{\alpha}
\def\eps{\varepsilon}
\def\dive{\operatorname{div}}
\def\x{{\boldsymbol x}}
\def\d{{\rm d}}
\def\ov#1{\overline{#1}}
\def\un#1{\underline{#1}}
\def\wh#1{\widehat{#1}}
\def\wt#1{\widetilde{#1}}
\def\bga{\boldsymbol{\gamma}}
\def\balpha{\boldsymbol{\alpha}}
\def\bpi{{\boldsymbol{\pi}}}
\def\bxi{{\boldsymbol{\xi}}}
\def\bphi{{\boldsymbol{\phi}}}
\def\bUpsilon{\boldsymbol{\Upsilon}}
\def\bPsi{\boldsymbol{\Psi}}
\def\bF{{\boldsymbol  F}}
\def\bW{{\boldsymbol  W}}
\def\bh{{\boldsymbol  h}}
\def\bs{{\boldsymbol  s}}
\def\bp{{\boldsymbol  p}}
\def\bv{{\boldsymbol v}}
\def\bz{{\boldsymbol z}}
\def\by{{\boldsymbol y}}
\def\0{{\bf 0}}
\def\1{{\bf 1}}
\def\bbG{{\mathbb G}}
\def\bbI{{\mathbb I}}
\def\bbJ{{\mathbb J}}
\def\bbK{{\mathbb K}}
\def\bbS{{\mathbb S}}
\def\bAa{\boldsymbol{\mathcal A}}
\def\bXx{\boldsymbol{\mathcal X}}
\def\Aa{\mathcal{A}}
\def\Cc{\mathcal{C}}
\def\Ee{\mathcal{E}}
\def\Ff{\mathcal{F}}
\def\Hh{\mathcal{H}}
\def\Mm{\mathcal{M}}
\def\Oo{\mathcal{O}}
\def\Uu{\mathcal{U}}
\def\ICI{\begin{center} \color{red} \rule{\textwidth} {1pt} \\ICI \\\rule{\textwidth} {1pt} \end{center} } 
\begin{document}

\title[Multiphase flows in porous media: a variational approach]{Incompressible immiscible multiphase flows in porous media: a variational approach}
\author{Cl\'ement Canc\`es}
\address{
Cl\'ement Canc\`es (\href{mailto:clement.cances@inria.fr}{\tt clement.cances@inria.fr}).
Team RAPSODI, Inria Lille -- Nord Europe, 40 av. Halley, F-59650 Villeneuve d'Ascq, France.
} 
\author{Thomas O. Gallou\"et}
\address{
Thomas O. Gallou\"et (\href{mailto:thomas.gallouet@ulg.ac.be}{\tt thomas.gallouet@ulg.ac.be}).
D\'epartement de math\'ematiques, Universit\'e de Li\`ege, All\'ee de la d\'ecouverte 12, B-4000 Li\`ege, Belgique.
}
\author{L\'eonard Monsaingeon}
\address{
L\'eonard Monsaingeon (\href{mailto:leonard.monsaingeon@univ-lorraine.fr}{\tt leonard.monsaingeon@univ-lorraine.fr}).
Institut \'Elie Cartan de Lorraine, Universit\'e de Lorraine, Site de Nancy B.P. 70239, F-54506 Vandoeuvre-l\`ess-Nancy Cedex
}

\begin{abstract}
We describe the competitive motion of $(N+1)$ incompressible 
immiscible phases within a porous medium as the gradient flow 
of a singular energy in the space of non-negative measures with 
prescribed masses, endowed with some tensorial Wasserstein distance. 
We show the convergence of the approximation obtained by a 
minimization scheme {\em \`a la}  [R. Jordan, D. Kinderlehrer \& 
F. Otto, SIAM J. Math. Anal, 29(1):1--17, 1998]. This allow to obtain 
a new existence result for a physically well-established system of 
PDEs consisting in the Darcy-Muskat law for each phase, $N$ capillary 
pressure relations, and a constraint on the volume occupied by the fluid. 
Our study does not require the introduction of any global or 
complementary pressure. 
\end{abstract}

\maketitle

\noindent
{\small {\bf Keywords.}
Multiphase porous media flows, Wasserstein gradient flows, 
constrained parabolic system, minimizing movement scheme
\vspace{5pt}

\noindent
{\bf AMS subjects classification. }
35K65, 35A15, 49K20, 76S05
}

\tableofcontents

\newpage

\section{Introduction}\label{sec:intro}

\subsection{Equations for multiphase flows in porous media}\label{ssec:equation}
We consider a {convex} open bounded set $\O\subset \R^d$ representing a porous medium. 
$N+1$ incompressible and immiscible phases, labeled by subscripts $i\in \{0,\dots, N\}$ are supposed to flow within the pores. 
Let us present now some classical equations that describe the motion of such a mixture. The physical justification 
of these equations can be found for instance in~\cite[Chapter 5]{BB90}.
We denote by $s_i: \O\times(0,T)=:Q \to [0,1]$ the content of the phase $i$, i.e., the volume ratio of the phase $i$ compared to all the phases and the solid matrix, 
and by $\bv_i$ the filtration speed of the phase $i$.
Then the conservation of the volume of each phase writes
\be\label{eq:cons-mass}
\p_t s_i + \div\left(s_i \bv_i\right) = 0 \quad \text{in\;} Q, \quad \forall i \in \{0,\dots, N\}, 
\ee
where $T>0$ is an arbitrary finite time horizon.
The filtration speed of each phase is assumed to be given by Darcy's law
\be\label{eq:Darcy}
\bv_i = - \frac{1}{\mu_i} \bbK \left( \grad p_i - \rho_i \g\right) \quad \text{in\;} Q, \quad \forall i \in \{0,\dots, N\}.
\ee
In the above relation, $\g$ is the gravity vector, $\mu_i$ denotes the constant viscosity of the phase $i$, $p_i$ its pressure, and $\rho_i$ its density. 
The intrinsic permeability tensor $\bbK: \ov \O \to \R^{d\times d}$ is supposed to be smooth, symmetric
$\bbK = \bbK^T$, 
and uniformly positive definite: there exist $\kappa_\star, \kappa^\star > 0$ such that:
\be\label{eq:Kelliptic}
\kappa_\star |\bxi|^2 \le \bbK(\x) \bxi\cdot\bxi \le \kappa^\star |\bxi|^2, \qquad \forall \bxi \in \R^d, 
\; \forall \x \in \ov \O.
\ee
The pore volume is supposed to be saturated by the fluid mixture
\be\label{eq:sature} 
\sigma:= \sum_{i=0}^N s_i = \omega(\x) \quad \text{a.e. in}\; Q, 
\ee
where the porosity  $\omega: \ov\O \to (0,1)$ of the {surrounding porous matrix} is assumed to be smooth.
In particular, there exists $0<\omega_\star\leq \omega^\star$ such that $\omega_\star\leq \omega(\x) \leq \omega^\star$ for all $\x \in \ov \O$.
In what follows, we denote by 
$\bs = (s_0, \dots, s_N)$, by
$$
\Delta(\x) = \left\{\bs \in (\R_+)^{N+1} \middle| \; \sum_{i=0}^N s_i = \omega(\x) \right\}, \; 
$$
and by 
$$
\bXx = \left\{ \bs \in L^1(\O; \R_+^{N+1})\; \middle| \; 
\bs(\x) \in \Delta(\x) \; \text{a.e. in}\; \O \right\}.
$$
There is an obvious one-to-one mapping between the sets $\Delta(\x)$ and 
$$\Delta^*(\x) = \left\{\bs^\ast = (s_1, \dots, s_N) \in (\R_+)^{N}  \middle| \; 
\sum_{i=1}^N s_i \le \omega(\x) \right\}, $$
and consequently also between $\bXx$ and 
$$
\bXx^\ast = \left\{ \bs^\ast \in L^1(\O; \R_+^{N})\; \middle| \; 
\bs^*(\x) \in \Delta^*(\x) \; \text{a.e. in}\; \O \right\}.
$$
In what follows, we denote by 
$\bUpsilon = \bigcup\limits_{\x \in \ov \O} \Delta^*(\x) \times \{\x\}.$

In order to close the system,  we impose $N$ capillary pressure relations
\be\label{eq:pi_i}
p_i - p_0 = \pi_i(\bs^*, \x)\quad \text{a.e in}\; Q, \quad \forall i \in \{1,\dots,N\}, 
\ee
where the capillary pressure functions 
$\pi_i: \bUpsilon  \to \R$ are assumed to be continuously differentiable 
and to derive from a strictly convex potential $\Pi: \bUpsilon \to \R_+$:
\begin{equation*}
\label{eq:pi-pot}
\pi_i(\bs^*,\x) = \frac{\p\Pi}{\p s_i}(\bs^*,\x) 
\qquad \forall i \in \{1,\dots, N\}.
\end{equation*}
We assume that $\Pi$ is uniformly convex w.r.t. its first variable. 
More precisely, we assume that there exist two positive constants $\varpi_\star$ and $\varpi^\star$ 
such that, for all $\x \in \ov \O$ and all $\bs^*, \wh \bs^* \in \Delta^*(\x)$, one has 
\be\label{eq:Pi-unif-conv}
\frac{\varpi^\star}{2} {|\wh \bs^* - \bs^*|}^2 \ge 
\Pi(\wh \bs^*,\x) - \Pi(\bs^*,\x) - \bpi(\bs^*,\x)\cdot(\wh \bs^* - \bs^*)
\ge \frac{\varpi_\star}{2} {|\wh \bs^* - \bs^*|}^2, 
\ee
where we introduced the notation
$$
\bpi: \begin{cases}
\bUpsilon \to \R^N\\
(\bs^*,\x) \mapsto \bpi(\bs^*,\x) = \left(\pi_1(\bs^*,\x), \dots,\pi_N(\bs^*,\x)\right).
\end{cases}
$$
The relation~\eqref{eq:Pi-unif-conv} implies that $\bpi$
is monotone and injective w.r.t. its first variable.
Denoting by
$$
\bz\mapsto \boldsymbol\phi(\bz,\x)=(\phi_1(\bz,\x),\dots,\phi_N(\bz,\x)) \in \Delta^*(\x)
$$
the inverse of $\bpi(\cdot,\x)$, it follows from~\eqref{eq:Pi-unif-conv} that
\be
\label{eq:H_pi_1}
0<\frac1{\varpi^\star}\leq \bbJ_\bz \boldsymbol\phi(\bz,\x)\leq \frac1{\varpi_\star}
\qquad \text{for all }\x\in\ov\O\text{ and all }\bz\in \bpi(\Delta^*(\x),\x),
\ee
where $\bbJ_\bz$ stands for the Jacobian with respect to $\bz$ and 
the above inequality should be understood in the sense of positive definite 
matrices. 
Moreover, due to the regularity of $\bpi$ w.r.t. the space variable, there 
exists $M_\bphi>0$ such that 
\be
\label{eq:H_pi_2}
|\grad_\x \boldsymbol \phi (\bz,\x)|\leq M_\bphi
\qquad \text{for all }\x\in\ov\O\text{ and all }\bz\in \bpi(\Delta^*(\x),\x),
\ee
where $\grad_\x$ denote the gradient w.r.t. to the second variable only.
\smallskip

The problem is complemented with no-flux boundary conditions
\be\label{eq:no-flux}
\bv_i \cdot \n = 0 \quad \text{on} \; \p\O \times (0,T), \quad \forall i \in \{0,\dots, N\},
\ee
and by the initial content profile $\bs^0 = \left(s_0^0, \dots, s_N^0\right) \in \bXx$:
\be\label{eq:init}
s_i(\cdot, 0) = s_i^0 \quad \forall i \in \{0,\dots, N\}, \quad \text{with}\quad
\sum_{i=0}^N s_i^0 = \omega \; \text{a.e. in}\ \O.
\ee

Since we did not consider sources, and since we imposed no-flux boundary conditions, 
the volume of each phase is conserved along time
\be\label{eq:mi}
\int_\O s_i(\x,t) \d\x = \int_\O  s_i^0(\x) \d\x =:m_i>0, \qquad \forall i \in \{0,\dots, N\}.
\ee

We can now give a proper definition of what we call a weak solution to the problem
\eqref{eq:cons-mass}--\eqref{eq:Darcy}, \eqref{eq:sature}--\eqref{eq:pi_i}, and
\eqref{eq:no-flux}--\eqref{eq:init}.
\begin{Def}[Weak solution]\label{Def:weak}
A measurable function $\bs: Q \to (\R_+)^{N+1}$ is said to be a weak solution if 
$\bs \in \Delta$ a.e. in $Q$, if there exists $\bp = (p_0, \dots, p_N) \in L^2((0,T);H^1(\O))^{N+1}$ 
such that the relations~\eqref{eq:pi_i} hold, and such that, for all 
$\phi \in C^\infty_c(\ov \O \times [0,T))$ and all $i \in \{0,\dots, N\}$, one has  
\be\label{eq:weak}
\iint_Q  s_i \p_t \phi \d\x \d t + \int_\O s_i^0 \phi(\cdot, 0) \d\x 
- \iint_Q \frac{s_i}{\mu_i} \bbK \left(\grad p_i - \rho_i \g\right)\cdot \grad \phi \d\x \d t = 0.
\ee
\end{Def}

\subsection{Wasserstein gradient flow of the energy}\label{ssec:GFW}

\subsubsection{Energy of a configuration}\label{sssec:energy}

First, we extend the convex function $\Pi: \bUpsilon \to [0,+\infty]$, called 
\emph{capillary energy density}, to a convex function 
(still denoted by) $\Pi: \R^{N+1}\times \ov \O \to [0,+\infty]$ by setting 
$$
\Pi(\bs,\x) = \begin{cases}
\Pi\left(\omega \frac{\bs^*}{\sigma},\x\right) = \Pi\left(\omega \frac{s_1}{\sigma}, \dots, 
\omega\frac{s_N}{\sigma},\x\right) & \text{if } \bs \in \R_+^{N+1} \; \text{and}\;  \sigma \le \omega(\x), \\
+ \infty & \text{otherwise},
\end{cases}
$$
$\sigma$ being defined by~\eqref{eq:sature}. The extension of $\Pi$ by $+\infty$ 
where $\sigma >\omega$ is natural because of the incompressibility of the fluid mixture.
The extension to $\{\sigma <\omega\}\cup \R_+^{N+1}$ is designed so that the 
energy density 
only depends on the relative composition of the fluid mixture. However, this extension is somehow 
arbitrary, and, as it will appear in the sequel, it has no influence on the flow since 
the solution $\bs$ remains in $\bXx$ {(i-e $\sum_{i = 0}^N s_i=\omega$)}.
In our previous note~\cite{CGM15} the appearance of void $\sigma<\omega$ was directly
prohibited by a penalization in the energy.
\smallskip

The second part in the energy comes from the gravity. 
In order to lighten the notations, we introduce the functions 
\begin{equation*}
\label{eq:Psi_i}
\Psi_i: \left\{\begin{array}{rcl}
\ov \O &\to& \R_+, \\
\x &\mapsto & - \rho_i \g\cdot \x, 
\end{array}\right.
\qquad \forall i \in \{0,\dots, N\}, 
\end{equation*}
and
$$\bPsi:
\left\{\begin{array}{rcl}
\ov \O &\to& \R_+^{N+1}, \\
\x &\mapsto & \left(\Psi_0(\x), \dots, \Psi_N(\x)\right).
\end{array}\right.
$$
The fact that $\Psi_i$ can be supposed to be positive come from the fact that 
$\O$ is bounded.
Even though the physically relevant potentials are indeed the gravitational $\Psi_i(\x)=-\rho_i \g\cdot \x$, the subsequent analysis allows for a broader class of external potentials and for the sake of generality we shall therefore consider arbitrary $\Psi_i\in\mathcal C^1(\overline{\Omega})$ in the sequel.
\smallskip

We can now define the convex energy functional $\Ee:  L^1(\O,\R^{N+1}) \to \R\cup\{+\infty\}$ 
by adding the capillary energy to the gravitational one:
\be\label{eq:Ee}
\Ee(\bs) = \int_\O \left(\Pi(\bs,\x) + \bs\cdot \bPsi\right)  \d\x \ge 0, 
\qquad \forall \bs \in L^1(\O; \R^{N+1}).
\ee
Note moreover that $\Ee(\bs)<\infty$ iff $\bs \ge 0$ and $ \sigma \le \omega$ a.e. in $\O$.
It follows from the mass conservation~\eqref{eq:mi} that 
$$
\int_\O \sigma(\x) \d\x = \sum_{i=0}^N m_i = \int_\O \omega(\x) \d\x. 
$$
Assume that there exists a non-negligible subset $A$ of $\O$ such that $\sig < \omega$ on $A$, 
then necessarily, there must be a non-negligible subset $B$ of $\O$ such that $\sig > \omega$ 
so that the above equation holds, hence $\Ee(\bs) = +\infty$. 
Therefore, 
\be
\label{eq:E-finie}
\Ee(\bs) < \infty \quad \Leftrightarrow \quad \bs \in \bXx.
\ee

Let $\bp = (p_0, \dots, p_N): \O \to \R^{N+1}$  be such that 
$\bp \in \p_\bs\Pi(\bs,\x)$ for a.e. $\x$ in $\O$, then, defining 
$h_i = p_i + \Psi_i(\x)$ for all $i \in \{0,\dots, N\}$ and $\bh = \left(h_i\right)_{0\le i \le N}$, 
$\bh$ belongs to the subdifferential $\p_\bs \Ee(\bs)$ of $\Ee$ at $\bs$, i.e.,
$$
\Ee(\wh \bs) \ge \Ee(\bs) + \sum_{i=0}^N \int_\O  h_i (\wh s_i - s_i) \d\x, \qquad 
\forall \wh \bs \in L^1(\O;\R^{N+1}).
$$
The reverse inclusion also holds, hence
\be\label{eq:dE}
\p_\bs\Ee(\bs) = \left\{ \bh:\O \to \R^{N+1}\; \middle| \; 
h_i - \Psi_i(\x) \in \p_\bs \Pi(\bs,\x) \;\text{for a.e.}\; \x \in \O\right\}.
\ee

Thanks to~\eqref{eq:E-finie}, we know that a configuration $\bs$ has finite energy 
iff $\bs\in\bXx$. 
Since we are interested in finite energy configurations, it is relevant to consider the 
restriction of $\Ee$ to $\bXx$.
Then using the one-to-one mapping between $\bXx$ and $\bXx^\ast$, 
we define the energy of a configuration $\bs^*\in \bXx^\ast$, that we denote by $\Ee(\bs^*)$ 
by setting $\Ee(\bs^*) = \Ee(\bs)$ where $\bs$ is the unique element of $\bXx$ corresponding 
to $\bs^*\in\bXx^*$.

\subsubsection{Geometry of $\O$ and Wasserstein distance}\label{sssec:W2}

Inspired by the paper of Lisini~\cite{Lisini09}, where heterogeneous anisotropic 
degenerate parabolic equations are studied from a variational point of view, 
we introduce $(N+1)$ distances on $\O$ that take into account the permeability of the porous medium 
and the phase viscosities.
Given two points $\x,\by$ in $\O$, we denote by 
$$
P(\x,\by) = \left\{\bga \in C^1([0,1]; \O)\, \middle| \, \bga(0) = \x\;  \text{and}\; 
\bga(1) = \by\right\} $$
the set of the smooth paths joining $\x$ to $\by$, 
and we introduce distances $d_i$, $ i \in \{0,\dots, N\}$ between elements on $\O$ by setting
\begin{equation}
\label{eq:def_d_i}
d_i(\x,\by) = \inf_{\bga \in P(\x,\by)}\left( \int_0^1 \mu_i\bbK^{-1}(\bga(\tau)) 
\bga'(\tau) \cdot \bga'(\tau) \d \tau \right)^{1/2}, \qquad \forall (\x,\by)\in \ov \O.
\end{equation}
It follows from~\eqref{eq:Kelliptic} that 
\be\label{eq:d_i-euclid}
\sqrt{\frac{\mu_i}{\kappa^\star}}|\x-\by| \le d_i(\x,\by) \le \sqrt{\frac{\mu_i}{\kappa_\star}} |\x-\by|, 
\qquad \forall (\x,\by)\in \ov \O^2.
\ee

For $i \in \{0,\dots, N\}$ we define 
$$\Aa_i = \left\{ s_i \in L^1(\O; \R_+) \, \middle| \, \int_\O s_i \d\x = m_i \right\}.$$
Given $s_{i}, \wh s_{i}\in\Aa_i$, the set of admissible transport plans between $s_i$ and $\wh s_i$ 
is given by
$$
\Gamma_i(s_{i}, \wh s_{i}) = \left\{ \theta_i \in \Mm_+(\O\times\O)\, \middle| 
\,\theta_i(\Omega\times\Omega)=m_i,\, 
\theta_i^{(1)} = s_i \; \text{and} \; \theta_i^{(2)} = \wh s_i \;
\right\},
$$
where $\Mm_+(\O\times\O)$ stands for the set of Borel measures on $\O\times \O$ 
and $\theta_i^{(k)}$ is the $k^\text{th}$ marginal of the measure $\theta_i$.
We define the quadratic Wasserstein distance $W_i$ on $\Aa_i$ by setting
\be\label{eq:Wi}
W_i(s_{i}, \wh s_{i}) = \left(\inf_{\theta_i \in \Gamma(s_{i}, \wh s_{i})} \iint_{\O\times\O} d_i(\x,\by)^2 \d \theta_i(\x,\by)\right)^{1/2}.
\ee
Due to the permeability tensor $\mathbb K(\x)$, the porous medium $\O$ might be heterogeneous and anisotropic. 
Therefore, some directions and areas might me privileged by the fluid motions. 
This is encoded in the distances $d_i$ we put on $\O$. 
Moreover, the more viscous the phase is, the more costly are its displacements, 
hence the $\mu_i$ in the definition~\eqref{eq:def_d_i} of $d_i$. But it follows from~\eqref{eq:d_i-euclid} 
that 
\be\label{eq:W_i-equiv}
\sqrt{\frac{\mu_i}{\kappa^\star}} W_{\rm ref}(s_i,\wh s_i) \le
W_{i}(s_i,\wh s_i)
\le \sqrt{\frac{\mu_i}{\kappa_\star}} W_{\rm ref}(s_i,\wh s_i)., 
\qquad \forall s_i, \wh s_i \in \Aa_i, 
\ee
where $W_{\rm ref}$ denotes the classical quadratic Wasserstein distance defined by 
\be\label{eq:Wref}
W_{\rm ref}(s_i,\wh s_i) = 
 \left(\inf_{\theta_i \in \Gamma(s_{i}, \wh s_{i})} \iint_{\O\times\O} |\x-\by|^2 \d \theta_i(\x,\by)\right)^{1/2}.
\ee

\smallskip

With the phase Wasserstein distances $\left(W_i\right)_{0\le i \le N}$ at hand, 
we can define the global Wasserstein distance $\bW$ on 
$\bAa:=\Aa_0 \times \dots \times \Aa_N$ by setting 
$$
\bW(\bs, \wh \bs) = \left( \sum_{i=0}^N W_i(s_i, \wh s_i)^2\right)^{1/2}, \quad \forall \bs, \wh \bs \in \boldsymbol{\mathcal A}.
$$

{
Finally for technical reasons we also assume that there exist smooth extensions $ \widetilde \bbK$ and $ \widetilde \omega$ to $\R^d$ of the tensor and the porosity, respectively, such that \eqref{eq:Kelliptic} holds on $\R^d$ for $ \widetilde \bbK$,  and such that $ \tilde \omega$ is strictly bounded from below. This allows to define distances $\wt d_i$ on the whole $\R^d$by 
\begin{equation}
\label{eq:def_d_i-tilde}
\wt d_i(\x,\by) = \inf_{\bga \in \wt P(\x,\by)}\left( \int_0^1 \mu_i\wt \bbK^{-1}(\bga(\tau)) 
\bga'(\tau) \cdot \bga'(\tau) \d \tau \right)^{1/2}, \qquad \forall \x,\by\in \R^d
\end{equation}
where 
$
\wt P(\x,\by) = \left\{\bga \in C^1([0,1]; \R^d)\, \middle| \, \bga(0) = \x\;  \text{and}\; 
\bga(1) = \by\right\}.
$
In the sequel, we assume that the extension $\wt \bbK$ of $\bbK$ is such that 
 \begin{equation}
\label{eq:hyp_Omega_convex}
\O \text{ is geodesically convex in } \mathcal M_i = (\R^d,  \widetilde d_i )  \text{ for all }i.
\end{equation}
In particular $\widetilde d_i =d_i$ on $\O\times \O $. Since $\widetilde \bbK^{-1}$ is smooth, at least $C_b^2(\R^d)$, the Ricci curvature of the smooth complete Riemannian manifold $ \mathcal M_i$ is 
uniformly bounded, i.e., there exists $C$ depending only on $\left(\mu_i\right)_{0\le i \le N}$ 
and $\widetilde \bbK$ such that 
\be\label{eq:Ricci}
| {\rm Ric}_{\mathcal M_i,\x} (\bv) | \le C  \mu_i \bbK^{-1}\bv \cdot \bv, 
\qquad 
\forall \x \in \R^d, \; \forall \bv \in \R^d.
\ee
Combined with the assumptions on $ \widetilde \omega$ we deduce that $\mathcal H_{\widetilde \omega}$ is $\widetilde \lambda_i$ displacement convex on $\mathcal P_2^{ac}(\mathcal M_i)$ for some $\widetilde \lambda_i \in \R$. 
Then
\eqref{eq:hyp_Omega_convex} and mass scaling implies that $\mathcal H_{ \omega}$ is $\lambda_i$ displacement convex on $( \mathcal A_i,W_i)$ for some $ \lambda_i \in \R$.
We refer to  \cite[Chap. 14 \& 17]{Villani09} for further details on the Ricci curvature and its links with
optimal transportation.

In the homogeneous and isotropic case $\bbK(\x)=\operatorname{Id}$, Condition~\eqref{eq:hyp_Omega_convex} simply amounts to assuming that $\Omega$ is convex.
A simple sufficient condition implying~\eqref{eq:hyp_Omega_convex} is given in Appendix~\ref{app:convex} in the isotropic but heterogeneous case $\bbK(\x) = \kappa(\x) \bbI_d$.}

\subsubsection{Gradient flow of the energy}\label{sssec:GF}

The content of this section is formal. 
Our aim is to write the problem as a gradient flow, i.e. 
\be\label{eq:GF}
\frac{\d\bs }{\d t}  \in - \textbf{grad}_{\bW} \Ee(\bs) = - \left(\text{grad}_{W_0} \Ee(\bs), \dots , \text{grad}_{W_N} \Ee(\bs)\right)
\ee
where $\textbf{grad}_\bW  \Ee(\bs)$ denotes the full Wasserstein gradient of $\Ee(\bs)$, 
and $\text{grad}_{W_i}\Ee(\bs)$ stands for the partial gradient of $s_i \mapsto \Ee(\bs)$ with respect to the Wasserstein distance $W_i$.
The Wasserstein distance $W_i$ was built so that 
$\dot \bs = \left(\dot s_i\right)_{i} \in \textbf{grad}_{\bW} \Ee(\bs)$ iff there exists $\bh \in \p_\bs\Ee(\bs)$ such that  
$$
\partial_t s_i = -  \div\left(s_i\frac{\bbK}{\mu_i}  \grad h_i   \right), \qquad \forall i \in \{0,\dots, N\}.$$
Such a construction was already performed by Lisini in the case of a single equation.
Owing to the definitions~\eqref{eq:Ee} and \eqref{eq:dE} of the energy $\Ee(\bs)$ and its subdifferential $\p_\bs\Ee(\bs)$, the partial differential 
equations can be (at least formally) recovered. This was roughly speaking to purpose of our note~\cite{CGM15}.
\smallskip

In order to define rigorously the gradient $\textbf{grad}_\bW \Ee$ in~\eqref{eq:GF}, $\bAa$ has to be a Riemannian manifold. 
The so-called Otto's calculus (see \cite{Otto01} and~\cite[Chapter 15]{Villani09}) 
allows to put a formal Riemannian structure on $\bAa$. But as far as we know, this structure 
cannot be made rigorous and $\bAa$ is a mere metric space. 
This leads us to consider generalized gradient flows in metric spaces (cf. \cite{AGS08}).
We won't go deep into details in this direction, but we will prove that weak solutions 
can be obtained as limits of a minimizing movement scheme presented in the next section. 
This characterizes the gradient flow structure of the problem.

\subsection{Minimizing movement scheme and main result}\label{ssec:JKO}

\subsubsection{The scheme and existence of a solution}\label{sssec:scheme}
For a fixed time-step $\tau>0$, the so-called minimizing movement scheme~\cite{DeGiorgi93, AGS08}
or JKO scheme~\cite{JKO98} consists in computing recursively $\left(\bs^n\right)_{n\ge 1}$ as 
the solution to the minimization problem  
\be\label{eq:JKO}
\bs^{n} = \underset{\bs\in \bAa}{\operatorname{Argmin}}\left(\frac{\bW(\bs, \bs^{n-1})^2}{2\tau} + \Ee(\bs) \right), 
\ee
the initial data $\bs^0$ being given \eqref{eq:init}.

\subsubsection{Approximate solution and main result}\label{sssec:main}
Anticipating that the JKO scheme \eqref{eq:JKO} is well posed (this is the purpose of Proposition~\ref{prop:exist-JKO} below), we can now define the 
piecewise constant interpolation
$\bs^\tau \in L^\infty((0,T);\bXx\cap \bAa)$ by 
\be\label{eq:stau}
\bs^\tau(0,\cdot) = \bs^0, \quad \text{and}\quad 
\bs^\tau(t,\cdot) = \bs^n \quad\forall  t \in ((n-1)\tau,n\tau], \, \forall n \ge 1.
\ee
The main result of our paper is the following. 
\begin{thm}\label{thm:main}
Let $\left(\tau_k\right)_{k\ge 1}$ be a sequence of time steps tending to $0$, 
then there exists one weak solution $\bs$ in the sense of Definition~\ref{Def:weak} 
such that, up to an unlabeled subsequence, $\left(\bs^{\tau_k}\right)_{k\ge 1}$ converges a.e. 
in $Q$ towards $s$ as $k$ tends to $\infty$. 
\end{thm}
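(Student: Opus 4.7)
The plan is to follow the classical three-step program for convergence of JKO schemes: extract a priori estimates from the energy dissipation built into~\eqref{eq:JKO}, obtain compactness of the piecewise constant interpolants $\bs^\tau$, and pass to the limit in a discrete weak formulation derived from the first-order optimality conditions of the minimization step.

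\textbf{A priori estimates and discrete Euler--Lagrange relations.} Using the competitor $\bs=\bs^{n-1}$ in~\eqref{eq:JKO} yields the one-step dissipation inequality
\be
\Ee(\bs^n)+\frac{\bW(\bs^n,\bs^{n-1})^2}{2\tau}\le \Ee(\bs^{n-1}),
\ee
hence both the monotonicity $\Ee(\bs^n)\le \Ee(\bs^0)$ and the total square bound $\sum_{n\ge 1}\bW(\bs^n,\bs^{n-1})^2 \le 2\tau\,\Ee(\bs^0)$. The former, combined with the uniform convexity~\eqref{eq:Pi-unif-conv}, provides an $L^\infty(0,T;L^2(\O))$ bound on $\bs^{\tau,*}$; the latter, combined with~\eqref{eq:W_i-equiv}, gives a uniform $1/2$-H\"older-in-time modulus of continuity for $\bs^\tau$ in $W_{\rm ref}$. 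Inner variations $\bs\mapsto(I+\eps\boldsymbol{\zeta}_i)_\#\bs$ in~\eqref{eq:JKO} then produce, for each $n$ and each $i$, a ``Darcy pressure'' $p_i^n$ satisfying
$$
\mu_i\bbK^{-1}(\x)\,\frac{\x-T_i^n(\x)}{\tau}=-\grad p_i^n(\x)\quad s_i^n\text{-a.e.},\qquad p_i^n-p_0^n=\pi_i(\bs^{n,*},\x),
$$
where $T_i^n$ is the optimal map from $s_i^n$ to $s_i^{n-1}$ and $p_0^n$ appears as the Lagrange multiplier enforcing $\sum_i s_i^n = \omega$. Integrating the first relation against $s_i^n$ and summing over $n$ converts the total square bound into the discrete flux estimate
$$
\sum_{n\ge 1}\tau \sum_{i=0}^N\int_\O s_i^n\,\bbK(\x)\grad p_i^n\cdot\grad p_i^n\,\d\x \;\lesssim\; \Ee(\bs^0).
$$

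\textbf{Compactness and limit passage.} From the time equicontinuity in $W_{\rm ref}$ and the spatial/flux bounds above, a refined Aubin--Lions-type argument (in the spirit of Rossi--Savar\'e) extracts a subsequence along which $\bs^{\tau_k}\to\bs$ strongly in $L^1(Q)^{N+1}$ and almost everywhere in $Q$; the closed constraint $\bs\in\bXx$ is preserved under a.e.\ convergence. Rewriting the discrete Euler--Lagrange relations as a discrete analogue of~\eqref{eq:weak} and passing to the limit---using a.e.\ convergence of $\bs^{\tau_k}$ together with the continuity of $\bpi$ to recover~\eqref{eq:pi_i}, and weak $L^2(Q)$ convergence of the discrete pressure gradients for the flux term---produces a weak solution in the sense of Definition~\ref{Def:weak}. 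The most delicate point, in my view, is obtaining enough spatial compactness to identify the fluxes, because the pressure $p_0^n$ is only a Lagrange multiplier for incompressibility and is not directly controlled by the energy. I would address this through a \emph{flow interchange} argument \`a la Matthes--McCann--Savar\'e, using an auxiliary displacement-convex functional such as $\Hh_\omega(\bs)=\sum_i\int_\O s_i\log(s_i/\omega)\,\d\x$, whose $\lambda_i$-displacement convexity along $W_i$-geodesics is precisely what the geometric hypotheses~\eqref{eq:hyp_Omega_convex}--\eqref{eq:Ricci} are designed to guarantee. This step is also where the anisotropy and heterogeneity of $\bbK$ must be carefully absorbed into the estimates.
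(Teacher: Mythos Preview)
Your outline captures the broad architecture, but there is a genuine gap in the derivation of the Euler--Lagrange relations and the construction of the phase pressures.

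The inner variation $\bs\mapsto(I+\eps\boldsymbol{\zeta}_i)_\#\bs$ you propose does not stay in the admissible set: pushing forward a single component $s_i^n$ destroys the saturation constraint $\sum_j s_j=\omega$, so the perturbed configuration has infinite energy by~\eqref{eq:E-finie} and yields no information. Pushing all components by the same map fares no better, since $(I+\eps\boldsymbol\zeta)_\#\omega\neq\omega$ in general. This is precisely why the paper avoids pushforward perturbations and instead uses \emph{linear} interpolations $\bs^\eps=(1-\eps)\bs^n+\eps\bs$ with $\bs\in\bXx\cap\bAa$, combined with the Kantorovich dual characterization of $W_i^2$, to obtain the linearized optimality condition (Lemma~\ref{lem:Euler_Lagrange_lin}). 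From there, the sentence ``$p_0^n$ appears as the Lagrange multiplier enforcing $\sum_i s_i^n=\omega$'' hides the main difficulty of the multiphase case $N\ge 2$: one must produce a \emph{single} scalar field that serves simultaneously as multiplier for all $N+1$ phases. The paper does this via a multicomponent bathtub principle (Theorem~\ref{theo:bathtub}), which yields constants $\alpha_i^n$ and the function $\lambda^n=\min_j(F_j^n+\alpha_j^n)$ with $F_i^n+\alpha_i^n=\lambda^n$ on $\{s_i^n>0\}$; only then can one set $h_i^n=-\varphi_i^n/\tau+F_i^n-\lambda^n$ and recover the capillary relations~\eqref{eq:discrete_capillary}.

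A second issue is the ordering and strength of your estimates. The weighted flux bound $\sum_n\tau\int s_i^n\bbK\grad p_i^n\cdot\grad p_i^n\lesssim\Ee(\bs^0)$ is degenerate where $s_i^n$ vanishes and does not give weak $L^2(Q)$ compactness of $\grad p_i^\tau$, which you need to pass to the limit in~\eqref{eq:weak}. The paper runs the flow interchange \emph{first} (Lemma~\ref{lem:flow_interchange_one_step}) to obtain an unweighted bound $\|\grad\bpi^n\|_{L^2}^2$, and then leverages the covering $\O=\bigcup_i\{s_i^n\ge\omega_\star/(N{+}1)\}$ together with the relations $h_i^n-h_0^n=\pi_i^n+\Psi_i-\Psi_0$ to upgrade to a full $\|\bp^n\|_{H^1}^2$ estimate (Proposition~\ref{ajustement} and Corollary~\ref{cor:discrete_phase_pressure}). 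Your proposal treats the flow interchange as an afterthought for compactness, but it is in fact a prerequisite for even defining the phase pressures with the required regularity.
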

As a direct by-product of Theorem~\ref{thm:main}, the continuous problem admits 
(at least) one solution in the sense of Definition~\ref{Def:weak}.
As far as we know, this existence result is new.

\begin{rem}
It is worth stressing that our final solution will satisfy {\em a posteriori} 
$\partial_t s_i\in L^2((0,T);H^1(\Omega)')$,  $s_i \in L^2((0,T);H^1(\O))$, and thus
$s_i\in \mathcal C([0,T];L^2(\O))$. This regularity is enough to retrieve the so-called 
\emph{Energy-Dissipation-Equality}
$$
\frac{d}{dt}\mathcal E(\bs(t))=-\sum_{i=0}^N \int_\Omega\bbK\frac{s_i(t)}{\mu_i}\nabla (p_i(t)+\Psi_i)\cdot\nabla (p_i(t)+\Psi_i) \d\x \le 0 \quad \text{for a.e. }t\in (0,T),
$$
which is another admissible formulation of gradient flows in metric spaces \cite{AGS08}.
\end{rem}

\subsection{Goal and positioning of the paper}\label{ssec:positioning}

The aims of the paper are twofolds.
First, we aim to provide rigorous foundations 
to the formal variational approach exposed in the authors' recent note~\cite{CGM15}. 
This gives new 
insights into the modeling of complex porous media flows and their numerical 
approximation. Our approach appears to be very natural since only physically motivated 
quantities appear in the study. Indeed, we manage to avoid the introduction 
of the so-called Kirchhoff transform and global pressure, which classically appear in 
the mathematical study of multiphase flows in porous media (see for instance 
\cite{Chavent76, AM78, CJ86, FS93, GMT96, Chen01, Chavent09, AJV12, AJZK14}). 
\smallskip

Second, the existence result that we deduce from the convergence 
of the variational scheme is new as soon as there are at least three 
phases ($N \ge 2$). Indeed, since our study does not require the 
introduction of any global pressure, we get rid of many structural 
assumptions on the data among which the so-called \emph{total 
differentiability condition},  see for instance Assumption~{\bf (H3)} 
in the paper by Fabrie and Saad~\cite{FS93}. This structural condition is not naturally satisfied 
by the models, and suitable algorithms have to be employed in order 
to adapt the data to this constraint~\cite{GS85}. However, our approach 
suffers from another technical difficulty: we are stuck to the case 
of linear relative permeabilities. The extension to the case of nonlinear 
concave relative permeabilities, i.e., where~\eqref{eq:cons-mass} is replaced by 
$$
\p_t s_i + \div (k_i(s_i) \bv_i) = 0,
$$
may be reachable thanks to the contributions of Dolbeault, Nazaret, and 
Savar\'e~\cite{DNS09} {(see also \cite{ZM15_mob}}), but we did not push in this direction since 
the relative permeabilities $k_i$ are in general supposed to be convex
in models coming from engineering. 
\smallskip

Since the seminal paper of Jordan, Kinderlehrer, and Otto~\cite{JKO98}, 
gradient flows in metric spaces (and particularly in the space of probability measures endowed with 
the quadratic Wasserstein distance) were the object of many studies. Let us for 
instance refer to the monograph of Ambrosio, Gigli, and Savar\'e~\cite{AGS08} 
and to Villani's book~\cite[Part II]{Villani09} for a complete overview. Applications are numerous. 
We refer for instance to~\cite{Otto98} for an application to magnetic fluids, to~\cite{SS04, AS08, AMS11}
for applications to supra-conductivity, to~\cite{BCC08, Bla14_KS, ZM15} for applications to chemotaxis, 
to~\cite{LMS12} for phase field models, to~\cite{MRS10} for a macroscopic model of crowd motion, 
to~\cite{BGG13} for an application to granular media, to~\cite{CDFLS11} for aggregation equations,
or to~\cite{KMX17} for a model of ionic transport that applies in semi-conductors.
In the context of porous media flows, this framework has been used by Otto~\cite{Otto01} 
to study the asymptotic behavior of the porous medium equation, that is a simplified model 
for the filtration of a gas in a porous medium. The gradient flow approach in Wasserstein metric spaces 
was used more recently by Lauren\c{c}ot and Matioc~\cite{LM13_Muskat} on a thin film approximation 
model for two-phase flows in porous media.
{Finally, let us mention that similar ideas were successfully applied for multicomponent systems, see e.g. \cite{CarlierLaborde15,Laborde_PHD,ZM15_mob,zinsl14}.}

The variational structure of the system governing 
incompressible immiscible two-phase flows in porous media was recently depicted 
by the authors in their note~\cite{CGM15}. Whereas the purpose of~\cite{CGM15} is formal, our 
goal is here to give a rigorous foundation to the variational approach for complex flows in porous 
media. Finally, let us mention the work of Gigli and Otto~\cite{GO13} where it was noticed that 
multiphase linear transportation with saturation constraint (as we have here thanks 
to~\eqref{eq:cons-mass} and~\eqref{eq:sature}) yields nonlinear transport with mobilities that appear 
naturally in the two-phase flow context. 
\smallskip

The paper is organized as follows.
In Section~\ref{sec:one-step}, we derive estimates on the solution $\bs^\tau$ for a fixed $\tau$.
Beyond the classical energy and distance estimates detailed 
in \S\ref{ssec:energy}, we obtain enhanced regularity estimates thanks to an adaptation of the so-called 
{\em flow interchange} technique of Matthes, McCann, and Savar\'e~\cite{matthes2009family} 
to our inhomogeneous context in~\S\ref{ssec:flow-interchange}. 
Because of the constraint on the pore volume~\eqref{eq:sature}, the auxiliary flow we use is 
no longer the heat flow, and a drift term has to be added. An important effort is then done 
in \S\ref{ssec:Euler-Lagrange} to derive the Euler-Lagrange equations that follow from the 
optimality of $\bs^n$. Our proof is inspired from the work of Maury, Roudneff-Chupin, and 
Santambrogio~\cite{MRS10}. It relies on an intensive use of the dual characterization of the 
optimal transportation problem and the corresponding Kantorovitch potentials.
However, additional difficulties arise from the multiphase aspect of our problem, in particular 
when there are at least three phases (i.e., $N\ge2$). 
These are overpassed using a {generalized multicomponent bathtub principle 
(Theorem~\ref{theo:bathtub} in Appendix)} and computing the associated Lagrange multipliers
in \S\ref{subsec:decomposition}. This key step then allows to define the notion of discrete 
phase and capillary pressures in \S\ref{subsec:pressures}. Then 
Section~\ref{sec:convergence} is devoted to the convergence of the approximate solutions 
$\left(\bs^{\tau_k}\right)_k$ towards a weak solution $\bs$ as $\tau_k$ tends to $0$. 
The estimates we obtained in Section~\ref{sec:one-step} are integrated w.r.t. time 
in~\S\ref{ssec:estimates-time}. In \S\ref{ssec:compact}, we show that these estimates 
are sufficient to enforce the relative compactness of $\left(\bs^{\tau_k}\right)_{k}$ in the 
strong $L^1(Q)^{N+1}$ topology. Finally, it is shown in \S\ref{ssec:identify} that any limit 
$\bs$ of $\left(\bs^{\tau_k}\right)_{k}$ is a weak solution in the sense of Definition~\ref{Def:weak}.

\section{One-step regularity estimates}\label{sec:one-step}
The first thing to do is to show that the JKO scheme \eqref{eq:JKO} is well-posed.
This is the purpose of the following Proposition.

\begin{prop}\label{prop:exist-JKO}
Let $n \ge 1$ and $\bs^{n-1} \in \bXx \cap \bAa$, then there exists a unique solution $\bs^n$ 
to the scheme~\eqref{eq:JKO}. Moreover, one has $\bs^n \in \bXx \cap \bAa$.
\end{prop}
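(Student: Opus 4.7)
The plan is to apply the direct method of the calculus of variations, and to obtain uniqueness from the uniform strict convexity of $\Pi$ combined with the convexity of the squared Wasserstein distance under arithmetic means.

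For existence, I would first note that $\bs^{n-1}$ itself is admissible, and by \eqref{eq:E-finie} the functional
$$
\Ff(\bs) := \frac{\bW(\bs,\bs^{n-1})^2}{2\tau} + \Ee(\bs)
$$
is finite at $\bs^{n-1}$ and non-negative everywhere, so the infimum is finite. For any minimizing sequence $(\bs^k)_k\subset\bAa$, the bound on $\Ee(\bs^k)$ combined with \eqref{eq:E-finie} forces $\bs^k\in\bXx$, which yields the pointwise bound $0\le s_i^k(\x)\le\omega^\star$ a.e.\ in $\O$. Hence $(\bs^k)_k$ is bounded in $L^\infty(\O;\R^{N+1})$ and, up to a subsequence, converges weakly in every $L^p(\O)$ ($1\le p<\infty$) towards some $\bs^n$. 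The constraints defining $\bXx\cap\bAa$, namely $s_i\ge 0$, $\sum_{i=0}^N s_i=\omega$, and $\int_\O s_i=m_i$, are all linear hence stable under weak convergence, so $\bs^n\in\bXx\cap\bAa$. The energy $\Ee$ is convex on $L^1(\O;\R^{N+1})$ with convex lower-semicontinuous integrand (the capillary part by convexity of $\Pi$, the gravity part being linear) and is therefore weakly lower semicontinuous. On the bounded domain $\O$, weak $L^1$ convergence of non-negative densities with fixed mass upgrades to narrow convergence of the associated measures; since each $d_i$ is continuous on $\ov\O\times\ov\O$ (cf.\ \eqref{eq:d_i-euclid}), the classical lower semicontinuity of optimal transport costs gives $\liminf_k W_i(s_i^k,s_i^{n-1})\ge W_i(s_i^n,s_i^{n-1})$, hence the same for $\bW$. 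Consequently $\bs^n$ minimizes $\Ff$.

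For uniqueness, suppose $\bs^a,\bs^b\in\bXx\cap\bAa$ are two minimizers, and set $\bs^c:=\tfrac{1}{2}(\bs^a+\bs^b)\in\bXx\cap\bAa$ (again by linearity of the constraints). The uniform convexity assumption \eqref{eq:Pi-unif-conv}, applied pointwise and integrated over $\O$, together with the linearity of the gravitational contribution $\bs\cdot\bPsi$, yields
$$
\Ee(\bs^c)\;\le\;\tfrac{1}{2}\Ee(\bs^a)+\tfrac{1}{2}\Ee(\bs^b)-\tfrac{\varpi_\star}{8}\|\bs^{a*}-\bs^{b*}\|_{L^2(\O)}^2.
$$
Moreover, given optimal plans $\theta_i^a\in\Gamma_i(s_i^a,s_i^{n-1})$ and $\theta_i^b\in\Gamma_i(s_i^b,s_i^{n-1})$, the convex combination $\tfrac{1}{2}(\theta_i^a+\theta_i^b)$ is an admissible plan between $s_i^c$ and $s_i^{n-1}$, which gives the convexity inequality $\bW(\bs^c,\bs^{n-1})^2\le\tfrac{1}{2}\bW(\bs^a,\bs^{n-1})^2+\tfrac{1}{2}\bW(\bs^b,\bs^{n-1})^2$. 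Summing, $\Ff(\bs^c)<\tfrac{1}{2}\Ff(\bs^a)+\tfrac{1}{2}\Ff(\bs^b)$ unless $\bs^{a*}=\bs^{b*}$, and the one-to-one correspondence $\bXx^*\leftrightarrow\bXx$ then forces $\bs^a=\bs^b$.

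The main obstacle is the lower semicontinuity of $\bW$ under the available compactness, but this is resolved cleanly by the two observations that weak $L^1$ compactness on the bounded domain $\O$ with fixed marginal masses implies narrow convergence of the associated measures, and that the cost $d_i^2$ is continuous on $\ov\O\times\ov\O$ thanks to \eqref{eq:d_i-euclid}. The other potentially delicate point, the preservation of the saturation constraint $\sigma=\omega$ in the limit, is immediate because this constraint is linear.
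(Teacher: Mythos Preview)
Your proof is correct and follows essentially the same approach as the paper: the direct method with weak compactness coming from the pointwise bound $0\le s_i^k\le\omega$, weak lower semicontinuity of both $\Ee$ (convexity) and $\bW$ (narrow convergence and continuity of the cost), and uniqueness from the strict convexity of $\Pi$ together with the convexity of $\bW^2$ under linear interpolation of transport plans. The paper invokes Dunford--Pettis for compactness and cites a reference for the l.s.c.\ of $\bW$, while you extract compactness directly from the $L^\infty$ bound and spell out the narrow-convergence argument; both are equivalent here. One cosmetic point: the constraint $s_i\ge 0$ is not linear but closed and convex, which is what you actually use for its weak stability.
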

\begin{proof}
Any $\bs^{n-1}\in \bXx \cap \bAa$ has finite energy thanks to~\eqref{eq:E-finie}. 
Let ${(\bs^{n,k})}_k\subset\bAa$ be a minimizing sequence in \eqref{eq:JKO}.
Testing $\bs^{n-1}$ in \eqref{eq:JKO} it is easy to see that $\Ee(\bs^{n,k}) \le \Ee(\bs^{n-1}) < \infty$ for large $k$, thus ${(\bs^{n,k})}_k \subset \bXx\cap \bAa$ thanks to~\eqref{eq:E-finie}.  
Hence, one has $0\leq s^{n,k}_i(\x)\leq \omega(\x)$ for all $k$.
By Dunford-Pettis theorem, we can therefore assume that 
$s_i^{n,k}\rightharpoonup s_i^n$ weakly in $L^1(\Omega)$.
It is then easy to check that the limit $\bs^n$ of $\bs^{n,k}$ belongs to $\bXx\cap \bAa$.
The lower semi-continuity of the Wasserstein distance with respect to weak $L^1$ convergence is well known (see,  e.g.,  \cite[Prop. 7.4]{Santambrogio_OTAM}), and since the energy functional is convex thus l.s.c., we conclude that $\bs^n$ is indeed a minimizer.
Uniqueness follows from the strict convexity of the energy as well as from the convexity of the Wasserstein distances (w.r.t. linear interpolation $\bs_\theta=(1-\theta)\bs_0+\theta\bs_1$).
\end{proof}

The rest of this section is devoted to improving the regularity of the successive minimizers.

\subsection{Energy and distance estimates}\label{ssec:energy}
Testing $\bs=\bs^{n-1}$ in \eqref{eq:JKO} we obtain
\be
\label{eq:square_distance_one_step}
\frac{\bW(\bs^n, \bs^{n-1})^2}{2\tau} + \Ee(\bs^n)\leq \Ee(\bs^{n-1}),
\ee
As a consequence we have the monotonicity
$$
\ldots \leq \Ee(\bs^{n})\leq \Ee(\bs^{n-1})\leq \ldots \leq \Ee(\bs^0)<\infty
$$
at the discrete level, thus $\bs^n \in \bXx$ for all $n \ge 0$ thanks to~\eqref{eq:E-finie}.
Summing \eqref{eq:square_distance_one_step} over $n$ we also obtain the classical \emph{total square distance} estimate
\be
\label{eq:tot_square_dist}
\frac{1}{\tau}\sum\limits_{n\geq 0}\bW^2(\bs^{n+1},\bs^n)\; {\leq}\; 2\Ee(\bs^0) 
\le C\left(\O, \Pi, \bPsi \right), 
\ee
the last inequality coming from the fact that $\bs^0$ is uniformly bounded since it belongs to $\bXx$, 
thus so is $\Ee(\bs^0)$.
This readily gives the approximate $1/2$-H\"older estimate 
\be
\label{eq:discrete_12_holder}
\bW(\bs^{n_1},\bs^{n_2})\leq C\sqrt{|n_2-n_1|\tau}.
\ee

\subsection{Flow interchange, entropy estimate and enhanced regularity}\label{ssec:flow-interchange}

The goal of this section is to obtain some additional Sobolev regularity on the capillary pressure 
field $\bpi(\bs^{n*}, \x)$, where $\bs^{n*}=(s_1^n,\dots,s_N^n)$ is the unique element of $\bXx^*$ corresponding 
to the minimizer $\bs^n$ of~\eqref{eq:JKO}. 
In what follows, we denote by 
$$
\pi_i^n: \left\{\begin{array}{rcl}
\O & \to & \R, \\
\x &\mapsto& \pi_i(\bs^{n*}(\x), \x), 
\end{array}\right. 
\qquad \forall i \in \{1,\dots, N\}
$$
and $\bpi^n = \left(\pi_1^n, \dots, \pi_N^n\right)$.
Bearing in mind that $\omega(\x)\geq \omega_\star>0$ in $\overline\Omega$, we can define 
the relative Boltzmann entropy $\Hh_\omega$ with respect to $\omega$ by 
$$
\mathcal H_\omega(s):=\int_\Omega s(\x)\log\left(\frac{s(\x)}{\omega(\x)}\right)d \x,
\quad \text{for all measurable}\; s:\O \to \R_+.
$$
\begin{lem}
\label{lem:flow_interchange_one_step}
There exists $C$ depending only on $\O, \Pi, \omega, \bbK,
{(\mu_i)}_i,$ and $\bPsi$
such that, for all $n\ge 1$ and all $\tau>0$, 
one has 
 \be
 \label{eq:flow_interchange_one_step}
\sum \limits_{i=0}^N\|\nabla \pi_i^n\|^2_{L^2(\Omega)}
\leq C\left(1 + \frac{\bW^2(\bs^n,\bs^{n-1})}{\tau} + \sum\limits_{i=0}^N \frac{\mathcal H_\omega(s_i^{n-1})-\mathcal H_\omega(s_i^{n})}{\tau}\right).
 \ee
\end{lem}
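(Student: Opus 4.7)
The proof uses the flow-interchange technique of Matthes, McCann, and Savar\'e, adapted to this weighted multiphase setting. I would choose as auxiliary functional the sum of relative Boltzmann entropies $\mathcal F(\bs):=\sum_{i=0}^N \Hh_\omega(s_i)$, with each component $s_i$ evolving via its $W_i$-gradient flow $S_h^i s_i^n$ of $\Hh_\omega$ on $\Aa_i$. Explicitly, $u(h)=S_h^i s_i^n$ solves the weighted Fokker--Planck equation
\begin{equation*}
\p_h u=\frac{1}{\mu_i}\dive\bigl(\bbK\grad u - u\,\bbK\grad\log\omega\bigr),
\end{equation*}
with no-flux boundary conditions; the drift $-u\,\bbK\grad\log\omega$ is the extra term over the plain heat flow alluded to in the excerpt, reflecting the natural reference measure $\omega\,\d\x$ adapted to the pore-volume constraint. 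Each $S_h^i$ satisfies an EVI because $\Hh_\omega$ is $\lambda_i$-displacement convex on $(\Aa_i,W_i)$, a fact already granted in the excerpt via the Ricci bound~\eqref{eq:Ricci}.

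Plugging $\tilde\bs^h:=(S_h^0 s_0^n,\dots,S_h^N s_N^n)$ as a competitor in the minimality~\eqref{eq:JKO}, dividing by $h>0$, and sending $h\to 0^+$, the EVIs collectively yield
\begin{equation*}
-\frac{d}{dh}\Ee(\tilde\bs^h)\bigg|_{0^+}\le \frac{1}{\tau}\sum_{i=0}^N\bigl[\Hh_\omega(s_i^{n-1})-\Hh_\omega(s_i^n)\bigr]+C\,\frac{\bW^2(\bs^n,\bs^{n-1})}{\tau},
\end{equation*}
where $C$ depends only on $\max_i|\lambda_i|$. The crux of the proof is identifying the dissipation on the left-hand side. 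Formally, taking $h_i=p_i^n+\Psi_i\in\p_\bs\Ee(\bs^n)$ as in \eqref{eq:dE} and integrating by parts,
\begin{equation*}
-\frac{d}{dh}\Ee(\tilde\bs^h)\bigg|_{0^+}=\sum_{i=0}^N\frac{1}{\mu_i}\int_\O s_i^n\,\bbK\grad(p_i^n+\Psi_i)\cdot\grad\log\frac{s_i^n}{\omega}\,\d\x.
\end{equation*}
Using $s_i^n\grad\log(s_i^n/\omega)=\grad s_i^n-s_i^n\grad\log\omega$, the capillary pressure relation $p_i^n-p_0^n=\pi_i^n$ valid on the saturated set $\{\sigma=\omega\}$, and the identity $\sum_i s_i^n=\omega$, the dominant term reduces to $\sum_{i=1}^N\mu_i^{-1}\int_\O \bbK\grad\pi_i^n\cdot\grad s_i^n\,\d\x$. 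Since $\grad\bs^{n*}=\bbJ_\bz\bphi(\bpi^n,\x)\grad\bpi^n+\grad_\x\bphi$ by the chain rule and \eqref{eq:H_pi_1}, the uniform convexity~\eqref{eq:Pi-unif-conv} together with the ellipticity~\eqref{eq:Kelliptic} bounds this quadratic form below by $c\sum_i\|\grad\pi_i^n\|_{L^2}^2$, the inhomogeneity contributing only lower-order terms absorbed into $C$ through~\eqref{eq:H_pi_2}, the smoothness of $\bPsi$ and $\bbK$, and the uniform bound $s_i^n\le\omega^\star$.

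The proof must overcome three technical difficulties. First, $\bs^n$ has no a priori Sobolev regularity, so the integration by parts above is purely formal. I would remedy this by writing $\Ee(\tilde\bs^h)-\Ee(\bs^n)=-\int_0^h\mathcal D(\theta)\,\d\theta$ where $\mathcal D(\theta)$ is the dissipation evaluated on the now-smooth state $\tilde\bs^\theta$ (using the parabolic regularization of the Fokker--Planck flow for $\theta>0$), and then passing to the limit $h\to 0^+$ by weak lower semi-continuity. Second, the product flow $\tilde\bs^h$ may instantaneously exit $\{\sigma=\omega\}$, so that the extension of $\Pi$ makes $\Ee(\tilde\bs^h)=+\infty$ and renders the competitor inequality vacuous; I would bypass this either by adjoining to the flow a Lagrange multiplier enforcing $\sigma=\omega$ along the evolution, or by first regularizing $\bs^n$ into a strictly sub-saturated state and then removing the regularization at the end by lower semi-continuity. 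Third, the elimination of the common pressure $p_0^n$ and the extraction of the quadratic form in $\grad\bpi^n$ require a careful algebraic manipulation where the uniform strict convexity~\eqref{eq:Pi-unif-conv} of $\Pi$ is crucial --- this is precisely what closes the circle of inequalities and produces the positive constant in the final lower bound.
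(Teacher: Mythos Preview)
Your overall strategy is correct and matches the paper's, but you have missed the key idea that makes the flow-interchange work here, and as a result your ``second technical difficulty'' is not merely technical---under your setup it is fatal.

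You take as auxiliary flow the $W_i$-gradient flow of $\Hh_\omega$, which gives the PDE $\p_h u=\mu_i^{-1}\dive(\bbK\grad u-u\,\bbK\grad\log\omega)$ with a \emph{phase-dependent} diffusion rate $\mu_i^{-1}$. Since the $\mu_i$ are distinct, the sum $\sum_i S_h^i s_i^n$ does \emph{not} solve the same equation and is therefore not preserved; your competitor $\tilde\bs^h$ immediately leaves $\bXx$ and $\Ee(\tilde\bs^h)=+\infty$, so the minimality inequality is vacuous. Your proposed fixes do not work: a Lagrange multiplier enforcing $\sigma=\omega$ would couple the phases nonlinearly and destroy the EVI structure you need, and a ``strictly sub-saturated'' regularization of $\bs^n$ is impossible because the global mass constraint $\sum_i m_i=\int_\O\omega$ forces $\sigma=\omega$ a.e.\ for any element of $\bAa$ with finite energy (this is exactly~\eqref{eq:E-finie}).

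The paper's resolution is simple and elegant: run the \emph{same} linear PDE
\[
\p_t\check s_i=\dive(\bbK\grad\check s_i-\check s_i\,\bbK\grad\log\omega)
\]
for every phase, and observe that this is the $W_i$-gradient flow of the \emph{rescaled} entropy $\mu_i^{-1}\Hh_\omega$ (the $\mu_i$ in the metric cancels the $\mu_i^{-1}$ in the functional). Now all phases evolve under one linear equation for which $\omega$ is stationary; by linearity $\sum_i\check s_i(t)\equiv\omega$, and by the maximum principle $\check s_i\ge 0$, so $\check\bs(t)\in\bXx\cap\bAa$ is automatically admissible for all $t>0$. The EVI then involves $\mu_i^{-1}\Hh_\omega$ rather than $\Hh_\omega$, which is harmless. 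With admissibility secured, the paper computes $\frac{\d}{\d t}\Ee(\check\bs(t))$ directly from the definition of $\Ee$ in terms of $\Pi$ and $\bPsi$ (no phase pressures $p_i^n$ are needed or even defined at this stage---your use of them is circular, since their construction in \S\ref{subsec:pressures} relies on the present lemma), obtains a coercive term $-c\|\grad\check\bpi(t)\|_{L^2}^2$ via~\eqref{eq:H_pi_1}--\eqref{eq:H_pi_2}, and passes to the limit $t\to0^+$ by lower semi-continuity exactly as you outline.
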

\begin{proof}
 The argument relies on the \emph{flow interchange} technique introduced by Matthes, McCann, and Savar\'e 
 in \cite{matthes2009family}. Throughout the proof, $C$ denotes a fluctuating constant that depends on 
 the prescribed data $\O, \Pi, \omega, \bbK, {(\mu_i)}_i,$ and $\bPsi$, but neither on $t$, $\tau$, nor on $n$.
 For $i=0\ldots N$ consider the auxiliary flows
 \be \label{eq:auxiliary_flow_interchange}
 \left\{
 \begin{array}{ll}
{\p_t \check s_i}=\dive(\mathbb K\nabla \check s_i-\check s_i\mathbb K \nabla\log\omega),	& t>0,\,\x\in\Omega,\\
  \mathbb K(\nabla \check s_i-\check s_i \nabla\log\omega)\cdot \nu =0, & t>0,\,\x\in\p\Omega,\\
  \left. \check s_i\right|_{t=0}=s_i^n, & \x\in\Omega
 \end{array}
 \right.
 \ee
 for each $i\in\{0,\dots,N\}$.
 By standard parabolic theory (see for instance \cite[Chapter III, Theorem 12.2]{LSU68}), 
 these Initial-Boundary value problems are well-posed, and their solutions $\check s_i(\x)$ belong to 
 $\mathcal C^{1,2}((0,1]\times\overline\Omega) \cap \Cc([0,1];L^p(\O))$ for all $p\in(1,\infty)$ if $\omega \in \Cc^{2,\a}(\overline\Omega)$ and $\mathbb K\in\mathcal C^{1,\a}(\overline\Omega)$ for some $\alpha >0$.
Therefore, $t\mapsto \check s_i(\cdot, t)$ is absolutely continuous in $L^1(\O)$, thus in $\Aa_i$ endowed with 
the usual quadratic distance $W_{\rm ref}$~\eqref{eq:Wref} thanks to \cite[Prop. 7.4]{Santambrogio_OTAM}. 
Because of~\eqref{eq:W_i-equiv}, the curve $t\mapsto \check s_i(\cdot, t)$ is also 
absolutely continuous in $\Aa_i$ endowed with $W_i$.
 
 From Lisini's results \cite{Lisini09}, we know that the evolution $t\mapsto \check s_i(\cdot, t)$ can be interpreted as the gradient flow of the relative Boltzmann functional $\frac{1}{\mu_i}\mathcal H_\omega$ with respect to the metric $W_i$, the scaling factor $\frac{1}{\mu_i}$ appearing due to the definition \eqref{eq:Wi} of the distance $W_i$.
As a consequence of~\eqref{eq:Ricci}, The Ricci curvature of $(\O,d_i)$ is bounded, hence bounded from below. 
 Since $\omega\in \mathcal C^2(\overline\Omega)$ {and with our assumption \eqref{eq:hyp_Omega_convex}} we also have that $\frac{1}{\mu_i}\mathcal H_\omega$ is $\lambda_i$-displacement convex with respect to $W_i$ for some $\lambda_i\in \R$ depending on $\omega$ and the geometry of $(\Omega,d_i)$, see \cite[Chapter 14]{Villani09}.
 Therefore, we can use the so-called \emph{Evolution Variational Inequality} characterization of gradient flows (see for instance \cite[Definition 4.5]{AG13_UGOT}) centered at $s_i^{n-1}$, namely
\begin{equation*}
\label{eq:EVI-i}
 \frac{1}{2}\frac{\d}{\d t}W_i^2(\check s_i(t),s_i^{n-1})+\frac{\lambda_i}{2}W_i^2(\check s_i(t),s^{n-1}_i)\leq \frac{1}{\mu_i}\mathcal H_\omega(s_i^{n-1})-\frac{1}{\mu_i}\mathcal H_\omega(\check s_i(t)).
\end{equation*}
 Denote by $\check \bs = (\check s_0, \dots, \check s_N)$, and by 
 $\check \bs^* = (\check s_1, \dots, \check s_N)$. 
 Summing the previous inequality over $i\in\{0,\dots,N\}$ leads to
 \be \label{eq:inter_estimate_W}
\frac{\d}{\d t}\left(\frac{1}{2\tau}\bW^2(\check\bs(t),\bs^{n-1})\right)\leq 
C\left(\frac{\bW^2(\check\bs(t),\bs^{n-1})}{\tau} + \sum\limits_{i=0}^N 
\frac{\mathcal H_\omega(s_i^{n-1})-\mathcal H_\omega(\check s_i(t))}{\tau}\right).
 \ee
 
 In order to estimate the internal energy contribution in \eqref{eq:JKO}, we first note that 
 $\sum s^n_i(\x)=\omega(\x)$ for all $\x \in \ov \O$, thus by linearity 
 of \eqref{eq:auxiliary_flow_interchange} and since $\omega$ is a stationary solution we have $\sum \check s_i(\x,t)=\omega(\x)$ as well. 
 Moreover, the problem~\eqref{eq:auxiliary_flow_interchange} is monotone, thus order preserving, 
 and admits $0$ as a subsolution. Hence $\check s_i(\x,t)\geq 0$, so that $\check \bs(t)\in 
 \bAa\cap\bXx$ is an admissible competitor in \eqref{eq:JKO} for all $t>0$.
The smoothness of $\check \bs$ for $t>0$ allows to write 
 \be\label{eq:capi-interchange}
 \frac{\d}{\d t}\left(\int_\Omega \Pi(\check \bs^*(\x,t),\x) \d\x\right)=
 \sum_{i=1}^N\int_\Omega \check \pi_i(\x,t)
 \p_t \check s_i(\x,t)\d\x = I_1(t) + I_2(t),
 \ee
 where $\check \pi_i := \pi_i(\check \bs^*, \cdot)$, and where, for all $t>0$,  we have set 
$$
 I_1(t) =  -\sum_{i=1}^N\int_\Omega \grad \check \pi_i(t) \cdot \bbK\grad \check s_i(t) \d\x, \quad 
 I_2(t) = - \sum_{i=1}^N\int_\Omega \frac{\check s_i(t)}{\omega}\grad \check \pi_i(t) \cdot \bbK \grad \omega \d\x. 
$$
To estimate $I_1$, we first use the invertibility of $\bpi$ 
to write 
$$\check \bs(\x,t)=\bphi(\check \bpi(\x,t),\x)=: \check \bphi(\x,t),$$
yielding
\be
\label{eq:nabla_s_nabla_pi}
\grad \check \bs(\x,t)=\bbJ_{\bz}\boldsymbol{\phi}(\check \bpi(\x,t),\x)\grad \check \bpi(\x,t)
+ \grad_\x \boldsymbol{\phi}(\check \bpi(\x,t),\x).
\ee
Combining \eqref{eq:Kelliptic}, \eqref{eq:H_pi_1}, \eqref{eq:H_pi_2} and the elementary inequality 
\be\label{eq:Young-delta}
\displaystyle ab \le  \delta\frac{a^2}{2} + \frac{b^2}{2\delta} \quad\text{with $\delta>0$ arbitrary}, 
\ee
we get that for all $t>0$, there holds
$$
I_1(t)\leq - \frac{\kappa_\star}{\varpi^\star} \int_\Omega |\grad \check\bpi(t)|^2\d\x 
+ \kappa^\star \left(\delta\int_\Omega |\grad \check \bpi(t)|^2\d\x
+	\frac{1}{\delta}\int_\Omega|\grad_\x \boldsymbol{\phi}(\check\bpi(t))|^2\d\x\right).
$$
Choosing $\delta= \frac{\kappa_\star}{4\kappa^\star \varpi^\star}$, we get that 
\be\label{eq:I1}
I_1(t)\leq - \frac{3\kappa_\star}{4\varpi^\star} \int_\Omega |\grad \check\bpi(t)|^2\d\x + C
, \qquad \forall t >0.
\ee
\smallskip

In order to estimate $I_2$, we use that $\check \bs(t) \in \bXx$ for all $t>0$, so that 
$0\leq \check s_i(\x,t)\leq \omega(\x)$, hence we deduce that
$\sum_{i=1}^N \left(\frac{\check s_i}{\omega}\right)^2 \le 1.$
Therefore, using \eqref{eq:Young-delta} again, we get
$$
I_2(t) \leq  
 \delta \kappa^\star \int_\O |\grad \check \bpi(t)|^2 \d\x + \frac{\kappa^\star}{\delta} \int_\O |\grad \omega|^2\d\x.
$$
Choosing again $\delta= \frac{\kappa_\star}{4\kappa^\star \varpi^\star}$ yields 
\be\label{eq:I2}
I_2(t) \le  \frac{\kappa_\star}{4\varpi^\star} \int_\Omega |\grad \check\bpi(t)|^2\d\x + C.
\ee
Taking~\eqref{eq:I1}--\eqref{eq:I2} into account in~\eqref{eq:capi-interchange} provides
\be
\label{eq:inter_estimate_intern}
 \frac{\d}{\d t}\left(\int_\Omega \Pi(\check \bs^*(\x,t),\x) \d\x\right) \le 
- \frac{\kappa_\star}{2\varpi^\star} \int_\Omega |\grad \check\bpi(t)|^2\d\x + C, 
 \qquad \forall t>0.
\ee

Let us now focus on the potential (gravitational) energy. Since $\check \bs(t)$ belongs to
$\bXx\cap \bAa$ for all $t>0$, we can make use of the relation
$$
\check s_0(\x,t) = \omega(\x) - \sum_{i=1}^N \check s_i(\x,t), \qquad \text{for all }(\x,t) \in \O \times \R_+, 
$$
to write: for all $t>0$, 
$$
 \sum\limits_{i=0}^N\int_\Omega \check s_i(\x,t)\Psi_i(\x)\d\x =
  \sum\limits_{i=1}^N\int_\Omega \check s_i(\x,t)(\Psi_i-\Psi_0)(\x)\d\x + \int_\Omega \omega(\x)\Psi_0(\x)\d\x.
$$ 
This leads to 
\be\label{eq:J12}
 \frac{\d}{\d t}\left(\sum\limits_{i=0}^N\int_\Omega \check s_i(t)\Psi_i\d\x\right)=\sum\limits_{i=1}^N\int_\Omega(\Psi_i(\x)-\Psi_0(\x))\p_t s_i(\x,t)\d\x = J_1(t) + J_2(t),
\ee
where, using the equations~\eqref{eq:auxiliary_flow_interchange}, we have set 
\begin{align*}
J_1(t) = & -\sum_{i=1}^N\int_\Omega \grad (\Psi_i-\Psi_0)\cdot \bbK\grad \check s_i(t) \d\x, 
\\
J_2(t) = & \sum_{i=1}^N\int_\Omega \frac{\check s_i(t)}{\omega}\grad (\Psi_i-\Psi_0)\cdot
\bbK\grad \omega \d\x.
\end{align*}
The term $J_1$ can be estimated using~\eqref{eq:Young-delta}. More precisely, for all $\delta >0$, 
we have 
\be\label{eq:J1_1}
 J_1(t) \le  \kappa^\star \left( \delta \|\grad \check \bs^*(t)\|^2_{L^2} 
 	+ \frac{1}{\delta}\sum\limits_{i=1}^N\|\nabla(\Psi_i-\Psi_0)\|^2_{L^2} \right).
\ee
Using~\eqref{eq:nabla_s_nabla_pi} together with~\eqref{eq:H_pi_1}--\eqref{eq:H_pi_2}, 
we get that 
$$
\left\|\grad \check \bs^*\right\|^2_{L^2} 
\le \left( \frac1{\varpi_\star} \|\grad \check \bpi \|_{L^2} + |\O| M_\bphi \right)^2 
\le \frac2{(\varpi_\star)^2} \|\grad \check \bpi \|_{L^2}^2 + 2 \left(|\O| M_\bphi\right)^2.
$$
Therefore, choosing $\delta = \frac{(\varpi_\star)^2 \kappa_\star}{8 \kappa^\star \varpi^\star}$ in~\eqref{eq:J1_1}, we infer from the regularity of $\bPsi$ that 
\be\label{eq:J1}
 J_1(t) \le  \frac{\kappa_\star}{4 \varpi^\star} \int_\Omega |\grad \check\bpi(t)|^2\d\x +C, \qquad \forall t>0.
\ee
Finally, it follows from the fact that  $\sum_{i=1}^N \check s_i \le \omega$, from the
Cauchy-Schwarz inequality, and from
{the regularity of $\bPsi,\omega$}
that 
\be\label{eq:J2}
 J_2(t) \ge - \kappa^\star \sum_{i=1}^N\| \grad \Psi_i - \grad \Psi_0\|_{L^2} \|\grad \omega\|_{L^2} = C.
\ee
Combining~\eqref{eq:J12}, \eqref{eq:J1}, and~\eqref{eq:J2} with~\eqref{eq:inter_estimate_intern}, 
we get that 
\be\label{eq:interchange_Ee}
\frac{\d}{\d t}\Ee(\check \bs(t)) \le - \frac{\kappa_\star}{4\varpi^\star} \int_\Omega 
|\grad \check\bpi(t)|^2\d\x + C, \qquad \forall t >0. 
\ee
Denote by
\be\label{eq:Fntau}
\mathcal F^{n}_\tau(\bs):=\frac{1}{2\tau}\bW^2(\bs,\bs^{n-1})+\mathcal E(\bs)
\ee
the functional to be minimized in \eqref{eq:JKO}, then gathering 
\eqref{eq:inter_estimate_W} and~\eqref{eq:interchange_Ee} provides
\begin{multline*}
\frac{\d}{\d t}\Ff_\tau^{n}(\check \bs(t))+ \frac{\kappa_\star}{4\varpi^\star}\|\grad \check \bpi\|^2_{L^2} \\
\leq C\left(1+\frac{\bW^2(\check \bs(t),\bs^{n-1})}{\tau} + \sum\limits_{i=0}^N \frac{\mathcal H_\omega(s_i^{n-1})-\mathcal H_\omega(\check s_i(t))}{\tau}\right) 
\qquad \forall t>0.
\end{multline*}
Since $\check \bs(0)=\bs^n$ is a minimizer of \eqref{eq:JKO} we must have
$$
0\leq \limsup\limits_{t \to 0^+}\left(\frac{\d}{\d t}\Ff^n_\tau(\check \bs(t))\right),
$$
otherwise $\check \bs(t)$ would be a strictly better competitor than $\bs^n$ for small $t>0$.
As a consequence, we get 
$$
\liminf\limits_{t \to 0^+}\|\grad\check \bpi(t)\|^2_{L^2}\leq 
C\limsup\limits_{t\to 0^+}\left(1+\frac{\bW^2(\check \bs(t),\bs^{n-1})}{\tau} + \sum\limits_{i=0}^N \frac{\mathcal H_\omega(s_i^{n-1})-\mathcal H_\omega(\check s_i(t))}{\tau}\right).
$$
Since $\check s_i$ belongs to $C([0,1]; L^p(\O))$ for all $p \in [1,\infty)$ (see for instance~\cite{Cont_L1}), the continuity of the Wasserstein distance and of the Boltzmann entropy with respect to strong $L^p$-convergence imply that
$$
\bW^2(\check \bs(t),\bs^{n-1}) \underset{t\to 0^+}\longrightarrow \bW^2(\bs^n,\bs^{n-1}) 
\quad \text{and}\quad
\mathcal H_\omega(\check s_i(t))  \underset{t\to 0^+}\longrightarrow\mathcal H_\omega(s_i^n).
$$
Therefore, we obtain that 
\be\label{eq:liminf_pi_eps}
\liminf\limits_{t \to 0^+}\|\grad\check \bpi(t)\|^2_{L^2}\leq C\left(
1+\frac{\bW^2(\bs^n,\bs^{n-1})}{\tau} + \sum\limits_{i=0}^N \frac{\mathcal H_\omega(s_i^{n-1})-\mathcal H_\omega(s_i^n)}{\tau}
\right).
\ee
It follows from the regularity of $\bpi$ that 
$$
\bpi(\check\bs^{*}(t), \x)=\check \bpi(t)  \underset{t\to 0^+}\longrightarrow \bpi^n = \bpi(\bs^{n*}, \x) \quad \text{in}\; 
L^p(\O).
$$
Finally, let $\left(t_\ell\right)_{\ell \ge 1}$ be a decreasing sequence tending to $0$ 
realizing the $\liminf$ in~\eqref{eq:liminf_pi_eps}, then the sequence 
$\left(\grad \check \bpi(t_\ell)\right)_{\ell\ge1}$ converges weakly in $L^2(\O)^{N\times d}$ towards 
$\grad \bpi^n$. The lower semi-continuity of the norm w.r.t. the weak convergence leads to 
\begin{multline*}
\sum\limits_{i=1}^N\|\grad \pi_i^n\|^2_{L^2} \leq \lim\limits_{\ell \to \infty}\|\grad \check \bpi(t_\ell)\|^2_{L^2} =\liminf\limits_{t\to 0^+}\|\grad \check \bpi(t)\|^2_{L^2}\\
\leq C\left(1+\frac{\bW^2(\bs^n,\bs^{n-1})}{\tau} + \sum\limits_{i=0}^N \frac{\mathcal H_\omega(s_i^{n-1})-\mathcal H_\omega(s_i^n)}{\tau}\right)
\end{multline*}
and the proof is complete.
\end{proof}

\section{The Euler-Lagrange equations and pressure bounds}\label{ssec:Euler-Lagrange}

The goal of this section is to extract informations coming from the optimality of 
$\bs^n$ in the JKO minimization~\eqref{eq:JKO}. The main difficulty consists in 
constructing the phase and capillary pressures from this optimality condition. 
Our proof is inspired from~\cite{MRS10} and makes an extensive use 
of the Kantorovich potentials. Therefore, 
we first recall their definition and some useful properties. 
We refer to \cite[\S1.2]{Santambrogio_OTAM} or \cite[Chapter 5]{Villani09} for details.

\smallskip
Let $(\nu_1, \nu_2) \in \Mm_+(\O)^2$ be two nonnegative measures with same total mass.
A pair of Kantorovich potentials $(\varphi_i ,\psi_i) \in L^1(\nu_1)\times L^1(\nu_2) $ associated to the 
measures $\nu_1$ and $\nu_2$ and to the cost function $\frac12 d_i^2$ defined by~\eqref{eq:def_d_i}, $i\in \{0,\dots, N\}$, 
is a solution of the Kantorovich {\it dual problem} 
$$
DP_i(\nu_1,\nu_2) = \max\limits_{\substack{(\varphi_i ,\psi_i) \in L^1(\nu_1)\times L^1(\nu_2) \\ \varphi_i(\x) + \psi_i(\by) \leq \frac12d_i^2(\x,\by)}} \quad \int_{\O} \varphi_i(\x) \nu_1(\x)\d\x + \int_{\O} \psi_i(\by) \nu_2(\by) \d \by. 
$$
We will use the three following important properties of the Kantorovich potentials:
\begin{enumerate}[(a)]
\item There is always duality
$$
DP_i(\nu_1,\nu_2)=  \frac12 W^2_i(\nu_1,\nu_2), \qquad \forall i \in \{0,\dots, N\}. 
$$
\item A pair of Kantorovich potentials $(\varphi_i, \psi_i)$ is $\d\nu_1 \otimes \d\nu_2$ unique, 
up to additive constants.
\item The Kantorovich potentials $\varphi_i$ and $\psi_i$ are $\frac12d^2_i$-conjugate, that is 
\begin{align*}
\varphi_i( \x ) = \inf\limits_{\by \in \O}  \frac12d^2_i(\x,\by)-\psi_i ( \by ),  &\quad \forall\,  \x \in \O,  \\ 
\psi_i(\by) = \inf\limits_{\x \in \O}  \frac12d^2_i(\x,\by)-\varphi_i(\x),  &\quad \forall\, \by \in \O.
\end{align*}
\end{enumerate}
\begin{rem}\label{rem:potlip}
Since $\O$ is bounded, the cost functions $(\x,\by)\mapsto\frac12 d^2_i(\x,\by)$, $i \in \{1,\dots, N\}$, are globally Lipschitz continuous, see \eqref{eq:d_i-euclid}.
Thus item $(c)$ shows that $\varphi_i$ and $\psi_i$ are also Lipschitz continuous.
\end{rem}
\smallskip

\subsection{A decomposition result}
\label{subsec:decomposition}

The next lemma is an adaptation of  \cite[Lemma 3.1]{MRS10} to our framework. It essentially states that, since $\bs^n$ is a minimizer of \eqref{eq:JKO}, it is also a minimizer of the linearized problem.
{
\begin{lem}
\label{lem:Euler_Lagrange_lin} 
For $n\geq 1$ and $i=0,\ldots,N$ there exist some (backward, optimal) Kantorovich potentials $\varphi_i^n$ from $s^n_i$ to $s^{n-1}_i$ such that, 
using the convention $\pi^n_0=\frac{\partial \Pi}{\partial s_0}(s_1^n,\dots,s_N^n,\x)=0$, setting
\be
\label{eq:def_Fi}
F_i^n:=\frac{\varphi^n_i}{\tau}+\pi_i^n+\Psi_i,\quad \forall  i\in\left\{0,\dots,N\right\},
\ee
and denoting
${\bf F}^n = \left(F_i^n\right)_{0\le i \le N}$, there holds
\begin{equation}
 \label{eq:min_lin_Kantorovich}
 \bs^{n}\in\underset{\bs \in \bXx \cap \bAa }{\operatorname{Argmin}} 
 \int_{\Omega}\mathbf{F}^{n}(\x)\cdot \bs (\x)\d\x.
\end{equation}
Moreover, $F_i^n\in L^\infty\cap H^1(\Omega)$ for all $i \in \{0,\dots,N\}$.
\end{lem}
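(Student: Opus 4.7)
The plan is to exploit the minimality of $\bs^n$ by linearizing $\Ff_\tau^n$ along admissible perturbations. I fix any competitor $\bs \in \bXx \cap \bAa$ and consider the linear interpolate $\bs_\eps := (1-\eps)\bs^n + \eps \bs$ for $\eps \in [0,1]$. Since each fibre $\Delta(\x)$ is convex and the mass constraints defining $\bAa$ are linear, $\bs_\eps$ stays in $\bXx \cap \bAa$, so by optimality $\Ff_\tau^n(\bs_\eps) \ge \Ff_\tau^n(\bs^n)$. Dividing by $\eps$ and passing to the limit $\eps \to 0^+$ will produce the variational inequality
\begin{equation*}
\int_\O \mathbf{F}^n(\x) \cdot (\bs(\x) - \bs^n(\x)) \d\x \ge 0,
\end{equation*}
which, since the left-hand side is linear in the competitor $\bs$, is precisely the minimization property~\eqref{eq:min_lin_Kantorovich}.

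The energy contribution is computed directly: because $\bs_\eps \in \bXx$ the saturation $\sum_{i=0}^N s_{i,\eps} = \omega$ holds identically and the integrand $\Pi(\bs_\eps,\x)$ coincides with its primitive $\Cc^1$ form on $\bXx^*$. Continuity of $\bpi$ on the compact set $\bUpsilon$ and dominated convergence then yield
\begin{equation*}
\lim_{\eps \to 0^+} \frac{\Ee(\bs_\eps) - \Ee(\bs^n)}{\eps} = \sum_{i=0}^N \int_\O (\pi_i^n + \Psi_i)(s_i - s_i^n) \d\x,
\end{equation*}
with the convention $\pi_0^n = 0$ since $\Pi$ depends only on $\bs^*$. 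For the transport term I invoke the classical fact that the one-sided directional derivative of $s \mapsto \tfrac12 W_i^2(s, s_i^{n-1})$ along a linear density interpolation is given by an optimal Kantorovich potential $\varphi_i^n$ from $s_i^n$ to $s_i^{n-1}$ (see e.g.\ \cite{Santambrogio_OTAM}). More concretely, an optimal pair $(\varphi_i^n,\psi_i^n)$ at $\eps=0$ provides the admissible lower bound
\begin{equation*}
\tfrac12 W_i^2(s_{i,\eps}, s_i^{n-1}) \ge \int_\O \varphi_i^n s_{i,\eps}\d\x + \int_\O \psi_i^n s_i^{n-1}\d\x = \tfrac12 W_i^2(s_i^n, s_i^{n-1}) + \eps \int_\O \varphi_i^n (s_i - s_i^n)\d\x,
\end{equation*}
while the reverse bound up to $o(\eps)$ is obtained by plugging in a pair optimal at $\eps>0$ and invoking Arzel\`a--Ascoli on the equi-Lipschitz optimal potentials (Remark~\ref{rem:potlip}), whose uniform limit must coincide with $(\varphi_i^n,\psi_i^n)$ modulo additive constants that drop out after integration against mass-preserving differences. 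Summing over $i$ and dividing by $2\tau$ produces the desired inequality.

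Concerning regularity, $\varphi_i^n$ is globally Lipschitz by Remark~\ref{rem:potlip}, hence in $L^\infty \cap H^1(\O)$; $\Psi_i \in \Cc^1(\ov\O)$ trivially lies in the same space; and $\pi_i^n$ is bounded by continuity of $\bpi$ on the compact set $\bUpsilon$, with its $H^1$ bound supplied by Lemma~\ref{lem:flow_interchange_one_step}. The main technical point I anticipate is the justification of the Wasserstein directional derivative formula itself: the upper-bound half requires the equi-Lipschitz uniformity of optimal potentials as $\eps\to 0^+$, which is inherited from the Lipschitz regularity of the cost $d_i^2$ on $\ov\O \times \ov\O$ together with a standard normalization of the Kantorovich potentials. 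Once this limit is secured, the rest of the argument reduces to summation of the phase-wise directional derivatives and a routine appeal to dominated convergence.
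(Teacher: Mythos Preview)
Your approach---linearizing the JKO functional along the convex interpolation $\bs_\eps=(1-\eps)\bs^n+\eps\bs$ and extracting the Kantorovich potentials from the Wasserstein term---is exactly the one taken in the paper. The energy computation and the regularity claims are fine.

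The gap is in your handling of the Wasserstein directional derivative. You first fix a Kantorovich pair $(\varphi_i^n,\psi_i^n)$ at $\eps=0$, then pass to the limit in the pair optimal at $\eps>0$, and assert that the limit ``must coincide with $(\varphi_i^n,\psi_i^n)$ modulo additive constants.'' This uniqueness-up-to-constants is not free: it requires, for instance, that $s_i^{n-1}>0$ a.e.\ in $\Omega$ (so that $\psi_i^n$ is determined up to a constant on all of $\Omega$, and hence $\varphi_i^n=(\psi_i^n)^c$ as well). Without this, different competitors $\bs$ could produce different subsequential limits of $\varphi_i^\eps$, and you would not obtain a \emph{single} $\mathbf F^n$ independent of $\bs$ for which \eqref{eq:min_lin_Kantorovich} holds. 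The paper addresses this by first proving the lemma under the extra assumption $s_i^{n-1}>0$ (which guarantees uniqueness via \cite[Prop.~7.18]{Santambrogio_OTAM} and hence stability of the full sequence $\varphi_i^\eps\to\varphi_i^n$), and then removing it by an approximation argument: run the auxiliary parabolic flow~\eqref{eq:auxiliary_flow_interchange} from $\bs^{n-1}$ for a short time $\delta>0$ to produce a strictly positive $\bs^{n-1,\delta}$, solve the JKO step with this regularized datum, and pass to the limit $\delta\to 0$ using the $H^1$ bounds from Lemma~\ref{lem:flow_interchange_one_step} together with $\Gamma$-convergence of $\Ff_\tau^{n,\delta}$ to $\Ff_\tau^n$. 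This second step is not a cosmetic technicality, and your sketch omits it entirely.

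A minor remark: the lower-bound inequality you display (using $(\varphi_i^n,\psi_i^n)$ optimal at $\eps=0$) is not needed; only the upper bound on $\frac{1}{\eps}\big(\Ff_\tau^n(\bs_\eps)-\Ff_\tau^n(\bs^n)\big)$ matters, since the left-hand side is already $\geq 0$ by minimality of $\bs^n$.
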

\begin{proof}
 We assume first that $s^{n-1}_i(\x)>0$ everywhere in $ \O$ for all $i \in \{1,\dots, N\}$, 
 so that the Kantorovich potentials $(\varphi_i^n,\psi_i^n)$ from $s_i^n$ to $s_i^{n-1}$ are 
 uniquely determined after 
 normalizing $\varphi_i^n(\x_{\rm ref})=0$ for some arbitrary point $\x_{\rm ref}\in \Omega$ 
 (cf. \cite[Proposition 7.18]{Santambrogio_OTAM}).
 Given any $\bs = \left(s_i\right)_{1\le 0 \le N} \in \bXx\cap \bAa$ and $\eps\in(0,1)$ we define the perturbation
 $$
 \bs^\eps:=(1-\eps)\bs^n+\eps \bs.
 $$
 Note that $\bXx\cap \bAa$ is convex, thus $\bs^\eps$ is an admissible competitor for all $\eps\in(0,1)$.
 Let $(\varphi_i^\eps,\psi_i^\eps)$ be the unique Kantorovich potentials from $s_i^\eps$ to $s_i^{n-1}$, similarly normalized as $\varphi_i^\eps(\x_{\rm ref})=0$.
Then by characterization of the squared Wasserstein distance in terms of the dual Kantorovich problem we have
 $$
\begin{cases}
\displaystyle  \frac{1}{2}W_i^2(s_i^\eps,s_i^{n-1})=\int_\O \varphi_i^\eps(\x) s_i^\eps(\x) \d\x
  +\int_\O \psi_i^\eps(\boldsymbol{y})s_i^{n-1}(\boldsymbol y) \d\boldsymbol{y}, \\[8pt]
\displaystyle  \frac{1}{2}W_i^2(s_i^n,s_i^{n-1})\geq \int_\O \varphi_i^\eps(\x) s_i^n(\x) \d\x
  +\int_\O \psi_i^\eps(\boldsymbol{y})s_i^{n-1}(\boldsymbol y)\d\boldsymbol{y}.
\end{cases}
 $$
 By definition of the perturbation $\bs^\eps$ it is easy to check that 
 $s_i^\eps-s_i^n=\eps(s_i-s_i^n)$.
 Subtracting the previous inequalities we get
  \be
\frac{W_i^2(s_i^\eps,s_i^{n-1})-W_i^2(s_i^n,s_i^{n-1})}{2\tau}
 \leq 
 \frac{\eps}{\tau}\int_\O \varphi_i^\eps(s_i-s_i^n)\d\x.
 \label{eq:W2_kanto_i}
 \ee
 Denote by $\bs^{\eps*} = \left(s^\eps_1,\dots,s_N^\eps\right)$, $\bpi^\eps = \bpi(\bs^{\eps*},\cdot)$, and extend to the zero-th component $\overline{\bpi}^\eps=(0,\bpi^\eps)$. 
 The convexity of $\Pi$ as a function of $s_1,\dots,s_N$ implies that 
 \begin{multline} \label{eq:internal_kanto}
 \int_\O \left(\Pi(\bs^{n*},\x) - \Pi(\bs^{\eps*},\x)\right) \d\x 
 \ge
 \int_\O \bpi^\eps \cdot \left(\bs^{n*} - \bs^{\eps*}\right) \d\x\\
 =\int_\O \overline{\bpi}^\eps \cdot \left(\bs^{n} - \bs^{\eps}\right) \d\x
 = - \eps \int_\O \overline\bpi^\eps \cdot \left(\bs - \bs^n\right) \d\x.
 \end{multline}
For the potential energy, we obtain by linearity that
 \be
 \label{eq:ext_kanto}
 \int_\O \left(\bs^\eps - \bs^n\right)\cdot \bPsi\, \d\x = 
  \eps\int_\Omega(\bs-\bs^{n})\cdot \bPsi\d\x.
 \ee
 Summing \eqref{eq:W2_kanto_i}--\eqref{eq:ext_kanto}, dividing by $\eps$, and recalling that $\bs^n$ minimizes the functional $\mathcal F^n_\tau$ defined by \eqref{eq:Fntau}, we obtain
\be\label{eq:tralala-eps}
  0\leq \frac{\mathcal F_\tau^n (\bs^\eps)-\mathcal F^n_\tau(\bs^n)}{\eps}
  \leq \sum\limits_{i=0}^N\int_\Omega\left(\frac{\varphi^\eps_i}{\tau}+\overline\pi_i^\eps+\Psi_i\right) (s_i-s_i^n)\d\x
 \ee
 for all $\bs\in \bXx\cap\bAa$ and all $\eps\in (0,1)$.
 Because $\Omega$ is bounded, any Kantorovich potential is globally Lipschitz with bounds uniform in $\eps$ 
 (see for instance the proof of \cite[Theorem 1.17]{Santambrogio_OTAM}).
Since $\bs^\eps$ converges uniformly towards $\bs^n$ when $\eps$ tends to $0$, we infer 
from~\cite[Theorem 1.52]{Santambrogio_OTAM} that $\varphi_i^\eps$
converges uniformly towards $\varphi_i^n$ as $\eps$ tends to $0$, where 
$\varphi_i^n$ is a Kantorovich potential form $s_i^n$ to $s_i^{n-1}$.
Moreover, since $\bpi$ is uniformly continuous in $\bs$, we also know that $\bpi^\eps$ converges uniformly 
towards $\bpi^n$ and thus the extension to the zero-th component $\overline{\bpi}^\eps\to \overline\bpi^n=(0,\bpi^n)$ as well.
Then we can pass to the limit in~\eqref{eq:tralala-eps} and infer that
\be\label{eq:tralala-0}
0 \le  \int_\O {\bf F}^{n}\cdot(\bs - \bs^{n})\d\x, 
\qquad \forall \bs \in \bXx \cap \bAa
\ee
and \eqref{eq:min_lin_Kantorovich} holds.

If $s_i^{n-1}>0$ does not hold everywhere we argue by approximation.
Running the flow \eqref{eq:auxiliary_flow_interchange} for a short time $\delta>0$ starting from $\bs^{n-1}$, we construct an approximation $\bs^{n-1,\delta}=(s_0^{n-1,\delta},\dots,s_N^{n-1,\delta})$ converging to $\bs^{n-1}=(s_0^{n-1},\dots,s_N^{n-1})$
in $L^1(\Omega)$ as $\delta$ tends to $0$.
By construction $\bs^{n-1,\delta} \in \bXx\cap\bAa$, and it follows from the strong maximum principle that 
$s_i^{n-1,\delta}>0$ in $\ov\O$ for all $\delta>0$.
By Proposition~\ref{prop:exist-JKO} there exists a unique minimizer $\bs^{n,\delta}$ to the functional 
$$
\Ff_\tau^{n,\delta}: \begin{cases}
\bXx\cap \bAa \to \R_+ \\
 \bs \mapsto  \frac{1}{2\tau} \bW^2(\bs, \bs^{n-1,\delta}) + \Ee(\bs)
 \end{cases}
$$
Since $\bs^{n-1,\delta} >0$, there exist 
unique Kantorovich potentials  $(\varphi_i^{n,\delta},\psi_i^{n,\delta})$ from $s_i^{n,\delta}$ to $s_i^{n-1,\delta}$. 
This allows to construct ${\bf F}^{n,\delta}$ using~\eqref{eq:def_Fi} where $\varphi_i^n$ (resp. 
$\pi_i^n$) has been replaced by $\varphi_i^{n,\delta}$ (resp. $\pi_i^{n,\delta}$).
Thanks to the above discussion, 
\be\label{eq:tralala-delta}
0 \le \int_\O {\bf F}^{n,\delta*}\cdot(\bs^{*} - \bs^{n,\delta*})\d\x, 
\qquad \forall \bs^* \in {\bXx}^* \cap {\bAa}^*.
\ee
We can now let $\delta$ tend to $0$.
Because of the time continuity of the solutions to~\eqref{eq:auxiliary_flow_interchange}, we know that $\bs^{n-1,\delta}$ converges towards $\bs^{n-1}$ in $L^1(\O)$.
On the other hand, from the definition of $\bs^{n,\delta}$ and Lemma~\ref{lem:flow_interchange_one_step} (in particular \eqref{eq:flow_interchange_one_step} with $s^{n-1,\delta},s^{n,\delta},\bpi^{n,\delta}$ instead of $s^{n-1},s^n,\bpi^{n}$) we see that $\bpi^{n,\delta}$ is bounded in $H^1(\Omega)^{N+1}$ uniformly in $\delta>0$.
Using next the Lipschitz continuous~\eqref{eq:H_pi_2} of $\bphi$, 
one deduces that $\bs^{n,\delta}$ is uniformly bounded in $H^1(\O)^{N+1}$.
Then, thanks to Rellich's compactness theorem, we can assume 
that $\bs^{n,\delta}$ converges strongly in $L^2(\O)^{N+1}$ as $\delta$ tends to $0$.
By the strong convergence $\bs^{n-1,\delta}\to \bs^{n-1}$ and standard properties of the squared Wasserstein distance, one readily checks that
$\Ff_\tau^{n,\delta}$ $\G$-converges towards $\Ff_\tau^n$, and we can therefore identify the limit of $\bs^{n,\delta}$ 
as the unique minimizer $\bs^n$ of $\Ff_\tau^n$. 
Thanks to Lebesgue's dominated convergence theorem, we also infer that $\pi_i^{n,\delta}$ converges  in $L^2(\O)$ towards $\pi_i^n$.
Using once again the stability of the Kantorovich potentials 
\cite[Theorem 1.52]{Santambrogio_OTAM}, we know that $\varphi_i^{n,\delta}$ converges 
uniformly towards some Kantorovich potential $\varphi_i^n$. Then we can pass to the limit in
\eqref{eq:tralala-delta} and claim that~\eqref{eq:tralala-0} is satisfied even when some 
coordinates of $\bs^{n-1}$ vanish on some parts of $\O$. 

Finally, note that since the Kantorovich potentials $\varphi_i^n$ are Lipschitz continuous 
and because $\pi^n_i\in H^1$ 
(cf. Lemma~\ref{lem:flow_interchange_one_step}) and $\bPsi$ is smooth, we have $F_i^n\in H^1$.
Since the phases are bounded $0\leq s_i^n(\x)\leq \omega(\x)$ and $\bpi$ is continuous we have $\bpi^n\in L^\infty$, thus $F_i^n\in L^\infty$ as well and the proof is complete.
\end{proof}

We can now suitably decompose the vector field 
${\bf F}^n = \left(F_i^n\right)_{0\le i \le N}$ defined by \eqref{eq:def_Fi}.

\begin{coro}\label{coro:decomposition}
Let $\bF^n=(F_0^n,\dots,F_N^n)$ be as in Lemma~\ref{lem:Euler_Lagrange_lin}.
There exists $\boldsymbol{\alpha}^n \in \R^{N+1}$ such that, setting $\lambda^n(\x):=\min\limits_j(F_j^n(\x)+\alpha_j^n)$, there holds $\lambda^n\in H^1(\O)$ and
\begin{align}
\label{eq:decomp_si_+-}
&F_i^n +\alpha_i^n =  \lambda^n\;\;\d s_i^n-\text{a.e. in }\Omega,  &\forall i \in \{0,\dots,N\}, \\ 
\label{eq:nabla_lambda_i}
&\grad F_i^n=\grad \lambda^n \;\;\d s_i^n-\text{a.e. in }\Omega,  &\forall i \in \{0,\dots,N\}.
\end{align}
 \end{coro}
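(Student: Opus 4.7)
The plan is to apply the generalized multicomponent bathtub principle (Theorem~\ref{theo:bathtub}) to the linearized optimality \eqref{eq:min_lin_Kantorovich}. Since $\bs^n$ minimizes the linear functional $\bs\mapsto\int_\O \bF^n\cdot\bs\,\d\x$ over the convex set $\bXx\cap\bAa$, which is cut out by the pointwise saturation $\sum_i s_i(\x)=\omega(\x)$, the global mass constraints $\int_\O s_i\,\d\x=m_i$, and the sign conditions $s_i\geq 0$, duality produces two kinds of Lagrange multipliers: constants $\alpha_i^n\in\R$ dual to the mass constraints, and a measurable function $\lambda^n(\x)$ dual to the pointwise saturation. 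The structural conclusion of the bathtub principle is that, at almost every $\x$, the available mass $\omega(\x)$ is concentrated only on those indices $i$ which minimize $F_i^n(\x)+\alpha_i^n$. This identifies $\lambda^n(\x)=\min_j(F_j^n(\x)+\alpha_j^n)$ and immediately yields \eqref{eq:decomp_si_+-}: whenever $s_i^n(\x)>0$, the index $i$ must realize this minimum.

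For the regularity of $\lambda^n$, I would use Lemma~\ref{lem:Euler_Lagrange_lin}, which guarantees $F_i^n\in H^1(\O)\cap L^\infty(\O)$ for every $i$. The pointwise minimum of finitely many $H^1$ functions again lies in $H^1(\O)$: this follows from the identity $\min(u,v)=\tfrac{1}{2}(u+v-|u-v|)$ together with the standard fact that $|w|\in H^1$ when $w\in H^1$, with $\grad|w|=\mathrm{sign}(w)\,\grad w$ almost everywhere. Iterating over $i\in\{0,\dots,N\}$ then shows $\lambda^n\in H^1(\O)$.

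To obtain \eqref{eq:nabla_lambda_i}, observe that \eqref{eq:decomp_si_+-} means the $H^1$ functions $F_i^n+\alpha_i^n$ and $\lambda^n$ coincide on the set $\{s_i^n>0\}$ (up to a Lebesgue-null set). The classical Stampacchia lattice result for Sobolev functions — $\grad f=\grad g$ a.e.\ on $\{f=g\}$ whenever $f,g\in H^1$ — then gives $\grad F_i^n=\grad\lambda^n$ Lebesgue-a.e.\ on $\{s_i^n>0\}$. Since $s_i^n\in L^\infty(\O)$ is bounded by $\omega$, the measure $\d s_i^n=s_i^n(\x)\,\d\x$ is absolutely continuous with respect to Lebesgue measure and supported in $\{s_i^n>0\}$, so this is equivalent to the claim $\grad F_i^n=\grad\lambda^n$ $\d s_i^n$-a.e. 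The main obstacle is the first step: one must verify that the multicomponent bathtub principle of Theorem~\ref{theo:bathtub} produces compatible Lagrange multipliers for both the finite-dimensional mass constraints and the pointwise saturation constraint simultaneously, and that the resulting pointwise minimum has the claimed Sobolev regularity. Once this is granted, the remaining steps reduce to standard lattice calculus in $H^1$.
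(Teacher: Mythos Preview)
Your proposal is correct and follows essentially the same route as the paper: apply Theorem~\ref{theo:bathtub} to the linearized minimization \eqref{eq:min_lin_Kantorovich} to obtain $\balpha^n$ and \eqref{eq:decomp_si_+-}, observe that the minimum of finitely many $H^1$ functions is again $H^1$, and then deduce \eqref{eq:nabla_lambda_i} from the Stampacchia/Serrin chain rule $\grad f=\grad g$ a.e.\ on $\{f=g\}$. The paper phrases the last step via $\grad\lambda^n=\grad F_i^n\cdot\chi_{[F_i^n+\alpha_i^n=\lambda^n]}$ combined with $s_i^n=0$ on $[F_i^n+\alpha_i^n\neq\lambda^n]$, which is exactly your Stampacchia argument in different clothing.
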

\begin{proof}
By Lemma~\ref{lem:Euler_Lagrange_lin} we know that $\bs^n$ minimizes $\bs\mapsto \int \bF^n\cdot \bs$ among all admissible $\bs\in \bXx\cap\bAa$.
Applying the multicomponent bathtub principle, Theorem~\ref{theo:bathtub} in appendix, we infer that there exists $\balpha^n=(\alpha_0^n,\dots,\alpha^n_N)\in\R^{N+1}$ such that $F_i^n+\alpha_i^n=\lambda^n$ for $\d s_i^n$-a.e. $\x\in\Omega$ and $\lambda^n=\min\limits_j(F_j^n+\alpha_j^n)$ as in our statement.
 Note first that $\lambda^n\in H^1(\Omega)$ as the minimum of finitely many $H^1$ functions $F_0,\dots,F_N\in H^1(\O)$.
 From the usual Serrin's chain rule we have moreover that
 $$
 \grad \lambda^n=\grad\min\limits_j(F_j^n+\alpha_j^n)=\grad F_i .\chi_{[F_i^n+\alpha_i^n=\lambda^n]},
 $$
 and since $s_i^n=0$ inside $[F_i^n+\alpha_i^n\neq \lambda^n]$ the proof is complete.
\end{proof}
}

\subsection{The discrete capillary pressure law and pressure estimates}\label{subsec:pressures}

In this section, some calculations in the Riemannian settings $(\O,d_i)$ will be carried out.
In order to make them as readable as possible, we have to introduce a few basics. 
We refer to~\cite[Chapter 14]{Villani09} for a more detail presentation.

Let $i \in \{0,\dots, N\}$, then consider the Riemannian geometry $(\O, d_i)$, and let $\x \in \O$, 
then we denote by $g_{i,\x}: \R^d \times \R^d \to \R$ the local metric tensor defined by 
\begin{equation*}
\label{eq:g_i}
g_{i,\x}(\bv, \bv) = \mu_i \bbK^{-1}(\x) \bv\cdot \bv = \bbG_i(\x) \bv\cdot \bv, \qquad \forall \bv \in \R^d.
\end{equation*}
In this framework, the gradient $\grad_{g_i} \varphi$ of a function $\varphi \in \Cc^1(\O)$ is defined by 
$$
\varphi(\x+h \bv ) = \varphi(\x)  + h g_{i,\x}\left(\grad_{g_{i,\x}} \varphi (\x), \bv\right) + o(h), \qquad 
\forall \bv \in \bbS^{d-1}, \; \forall \x \in \O.
$$
It is easy to check that this leads to the formula 
\be\label{eq:grad_gi}
\grad_{g_{i}} \varphi = \frac1{\mu_i} \bbK \grad \varphi,
\ee
where  $\grad \varphi$ stands for the usual (euclidean) gradient. The formula~\eqref{eq:grad_gi} can be 
extended to Lipschitz continuous functions $\varphi$ thanks to Rademacher's theorem. 

For $\varphi$ belonging to $\Cc^2$, we can also define the Hessian $D^2_{g_i} \varphi$ of $\varphi$ in 
the Riemannian setting by 
$$
g_{i,\x}\left(D^2_{g_i}\varphi(\x)\cdot \bv,\bv\right)=\left.\frac{d^2}{dt^2}\varphi(\boldsymbol \gamma_t)\right|_{t=0}
$$
for any geodesic $\boldsymbol \gamma_t=\exp_{i,\x}(t\bv)$ starting from $\x$ with initial speed $\bv\in T_{i,\x}\O$. 

\smallskip
Denote by $\varphi_i^n$ the backward Kantorovich potential sending $s_i^n$ to $s_i^{n-1}$
associated to the cost $\frac12d_i^2$.
By the usual definition of the Wasserstein distance through the Monge problem, one has 
$$
W_i^2(s_i^{n}, s_i^{n-1}) = \int_\O d_i^2(\x, \mathbf{t}_i^n(\x)) s_i^n(\x) \d\x, 
$$
where  $\mathbf{t}_i^n$ denotes the optimal map sending $s_i^n$ on $s_i^{n-1}$. 
It follows from \cite[Theorem 10.41]{Villani09} that 
\be\label{eq:ti}
\mathbf{t}_i^n(\x) = \mathrm{exp}_{i,\x}\left(-\grad_{g_i} \varphi_i^n(\x)\right), \qquad \forall \x \in \O.
\ee
Moreover, using the definition of the exponential and the relation~\eqref{eq:grad_gi}, one gets 
that 
$$
d_i^2(\x, \mathrm{exp}_{i,\x}\left(-\grad_{g_i} \varphi_i^n(\x)\right) = 
	g_{i,\x} \left(\grad_{g_i} \varphi_i^n(\x), \grad_{g_i} \varphi_i^n(\x)\right) = 
	 \frac{1}{\mu_i} \bbK(\x)\grad \varphi_i^n(\x) \cdot \grad \varphi_i^n(\x).
$$
This yields the formula
\be\label{eq:BB00}
W_i^2(s_i^{n}, s_i^{n-1})  = \int_\O \frac{s_i^n}{\mu_i} \bbK \grad \varphi_i^n \cdot \grad \varphi_i^n \d\x, 
\qquad \forall i \in \{0,\dots, N\}.
\ee

We have now introduced the necessary material in order to reconstruct 
the phase and capillary pressures. This is the purpose of the following 
Proposition~\ref{ajustement} and of then Corollary~\ref{cor:discrete_phase_pressure}
\begin{prop} \label{ajustement}
For $n\geq 1$ let $\varphi_i^n:s^n_i\to s_i^{n-1}$ be the (backward) Kantorovich potentials from Lemma~\ref{lem:Euler_Lagrange_lin}.
There exists $\bh = \left(h_0^n,\dots,h_N^n\right) \in 
H^1(\Omega)^{N+1}$ such that
\begin{enumerate}[(i)]
 \item 
 $\grad h_i^n=-\frac{\grad\varphi_i^n}{\tau}$ for $\d s_i^n$-a.e. $\x\in \Omega$
 \item
 $h_i^n(\x)-h_0^n(\x)=\pi_i^n(\x)+\Psi_i(\x)-\Psi_0(\x)$ for $\d \x$-a.e. $\x\in \Omega$, $i \in \{1,\dots,N\}$
 \item
 there exists $C$ depending only on $\O, \Pi, \omega, \bbK, 
{(\mu_i)}_i,$ and $\bPsi$ such that, for all $n\ge1$ and all $\tau>0$, one has 
$$
\|\bh^n\|^2_{H^1(\Omega)^{N+1}}
\leq 
C\left(1+\frac{\bW^2(\bs^n,\bs^{n-1})}{\tau^2} + \sum\limits_{i=0}^N \frac{\mathcal H_\omega(s_i^{n-1})-\mathcal H_\omega(s_i^n)}{\tau}\right).
$$
\end{enumerate}
\label{prop:pressures}
\end{prop}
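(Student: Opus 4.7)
The plan is to construct $\bh^n$ directly from the Lagrangian decomposition furnished by Corollary~\ref{coro:decomposition}: with the convention $\pi_0^n\equiv 0$, I set
$$ h_i^n := \pi_i^n+\Psi_i-\lambda^n,\qquad i\in\{0,\dots,N\}. $$
Property (ii) is immediate by subtraction. For (i), Corollary~\ref{coro:decomposition} gives $\grad F_i^n=\grad\lambda^n$ for $\d s_i^n$-a.e.\ $\x$; since $F_i^n=\varphi_i^n/\tau+\pi_i^n+\Psi_i$, this rearranges to $\grad h_i^n=-\grad\varphi_i^n/\tau$ for $\d s_i^n$-a.e.\ $\x$, as required. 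The $H^1$ regularity of each $h_i^n$ follows from the $H^1$ regularity of $\pi_i^n$ (Lemma~\ref{lem:flow_interchange_one_step}), of $\lambda^n$ (Corollary~\ref{coro:decomposition}), and the smoothness of $\Psi_i$.

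The substantive work is estimate (iii). Splitting $\grad h_i^n=\grad\pi_i^n+\grad\Psi_i-\grad\lambda^n$, the first two pieces are already of the right order: the gravity contribution is a constant, and $\sum_i\|\grad\pi_i^n\|_{L^2}^2$ is controlled by Lemma~\ref{lem:flow_interchange_one_step}. The real obstacle is to bound $\|\grad\lambda^n\|_{L^2}^2$ by the right-hand side of (iii). This is delicate because $\lambda^n$ is the pointwise minimum of the $F_j^n+\alpha_j^n$, so its gradient coincides with $\grad F_i^n$ only on $\{s_i^n>0\}$, a possibly thin set, whereas the Brenier-type identity \eqref{eq:BB00} only controls $\grad\varphi_i^n$ in the \emph{weighted} norm $\int s_i^n|\grad\varphi_i^n|^2\,\d\x$.

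To bypass this, the key idea is a pigeonhole argument exploiting the pointwise non-degeneracy $\sum_i s_i^n=\omega\geq\omega_\star>0$: for almost every $\x$ one may choose $i^\ast(\x)$ with $s_{i^\ast(\x)}^n(\x)\geq\omega_\star/(N+1)$, so that Corollary~\ref{coro:decomposition} yields $\grad\lambda^n(\x)=\grad F_{i^\ast(\x)}^n(\x)$ and hence the pointwise estimate
$$ |\grad\lambda^n(\x)|^2 \;\leq\; \frac{N+1}{\omega_\star}\,s_{i^\ast(\x)}^n(\x)\,|\grad F_{i^\ast(\x)}^n(\x)|^2 \;\leq\; \frac{N+1}{\omega_\star}\sum_{i=0}^N s_i^n(\x)\,|\grad F_i^n(\x)|^2. $$
Integrating and expanding $\grad F_i^n=\grad\varphi_i^n/\tau+\grad\pi_i^n+\grad\Psi_i$, I will handle the three resulting contributions separately: the $\varphi$-piece becomes $\int s_i^n|\grad\varphi_i^n|^2/\tau^2\leq (\mu_i/\kappa_\star)\,W_i^2/\tau^2\leq C\bW^2/\tau^2$ by \eqref{eq:BB00} and the ellipticity of $\bbK$; the $\pi$-piece is dominated by $\omega^\star\|\grad\pi_i^n\|_{L^2}^2$ and reduced to Lemma~\ref{lem:flow_interchange_one_step}; the $\Psi$-piece contributes a harmless constant. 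Since $\bW^2/\tau\leq T\cdot \bW^2/\tau^2$ when $\tau\leq T$, all cross-terms coming from Lemma~\ref{lem:flow_interchange_one_step} are absorbed into the target right-hand side.

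Finally, for the $L^2$ component of the $H^1$ norm, I rely on the uniform boundedness of $\pi_i^n$ (continuity of $\bpi$ on the compact set $\bUpsilon$), on the smoothness of $\Psi_i$, and on the residual additive-constant freedom of the Kantorovich potentials $\varphi_i^n$; normalizing $\lambda^n$ to have zero average and invoking Poincar\'e's inequality converts the gradient bound into an $L^2$ bound. The pigeonhole step is the essential obstacle, and it is the only place where the multiphase structure combined with the pointwise saturation lower bound $\omega\geq\omega_\star>0$ plays a decisive role.
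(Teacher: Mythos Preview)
Your proposal is correct and follows essentially the same route as the paper. Your definition $h_i^n=\pi_i^n+\Psi_i-\lambda^n$ coincides with the paper's $h_i^n=-\varphi_i^n/\tau+F_i^n-\lambda^n$ after simplification, and the core of part (iii) --- the pigeonhole/covering argument based on the sets $\{s_i^n\geq\omega_\star/(N+1)\}$, combined with the weighted identity~\eqref{eq:BB00} and Lemma~\ref{lem:flow_interchange_one_step} --- is exactly the paper's argument. The only cosmetic difference is that you bound $\|\grad\lambda^n\|_{L^2}$ directly via a pointwise inequality and then assemble $\grad h_i^n$, whereas the paper first bounds $\|\grad h_0^n\|_{L^2}$ by splitting $\O=\Uu_0\cup(\Uu_0)^c$ and then recovers the other $h_i^n$ through relation~(ii); your organization is arguably slightly cleaner. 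The normalization step (zero average of $\lambda^n$ for you, of $h_0^n$ for the paper) exploits the same additive freedom and is equivalent.
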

\begin{proof}
{
Let $\varphi^n_i$ be the Kantorovich potentials from Lemma~\ref{lem:Euler_Lagrange_lin} and $F_i^n\in L^\infty\cap H^1(\O)$ as in \eqref{eq:def_Fi}, as well as $\boldsymbol{\alpha}^n \in \R^{N+1}$ and $\lambda^n=\min\limits_j(F_j^n+\alpha_j^n) \in L^\infty\cap H^1(\O)$ 
as in Corollary~\ref{coro:decomposition}.
Setting
 \begin{equation*}
 \label{eq:hin}
h_i^n:=-\frac{\varphi^n_i}{\tau}+F_i^n-\lambda^n, \qquad \forall i \in \{0,\dots, N\}, 
 \end{equation*}
 we have $h_i^n\in H^1(\Omega)$ as the sum of Lipschitz functions (the Kantorovich potentials $\varphi_i^n$) and $H^1$ functions $F_i^n,\lambda^n$.
 Recalling that we use the notation $\pi_0=\frac{\partial\Pi}{\partial s_0}=0$, we see from the definition \eqref{eq:def_Fi} of $F_i^n$ that
 \begin{multline}
 \label{eq:diff_h_in=pi}
h_i^n-h_0^n=  \left(F_i^n-\frac{\varphi_i^n}{\tau}\right)-\left(F_0^n-\frac{\varphi_0^n}{\tau}\right)=(\pi_i^n+\Psi_i)-(\pi_0^n+\Psi_0)=\pi_i^n+\Psi_i-\Psi_0
 \end{multline}
 for all $i\in \{1,\dots,N\}$ and $\d \x$-a.e. $x$, which is exactly our statement (ii).
 
  For (i), we simply use \eqref{eq:nabla_lambda_i} to compute
 \be\label{eq:grad-hi}
\grad h_i^n=-\frac{\grad \varphi_i^n}{\tau} + \grad(F_i^n-\lambda_i^n)=-\frac{\grad \varphi_i^n}{\tau}\quad \text{ for }\d s_i^n\text{-a.e. }\x\in \Omega, \qquad \forall i \in \{0,\dots, N\}.
 \ee
 }
In order to establish now the $H^1$ estimate (iii), let us denote
$$
\Uu_i = \left\{\x \in \O \; \middle| \; s_i^n(\x) \ge \frac{\omega_\star}{N+1} \right\}.
$$
Then since $\sum s_i^n(\x) = \omega(\x) \ge \omega_\star>0$, one gets that, 
up to a negligible set, 
\be\label{eq:cup-Uu}
\bigcup_{i=0}^N \Uu_i = \O, 
\quad \text{hence}\quad 
\left(\Uu_i\right)^c \subset \bigcup_{j\neq i} \Uu_j.
\ee
We first estimate $\grad h_0^n$.
To this end, we write
\be\label{eq:grad-h0-1}
\|\grad h^n_0\|^2_{L^2} \le \frac{1}{\kappa_\star} \int_\O \bbK \grad h_0^n \cdot \grad h_0^n \d\x  \le A + B, 
\ee
where we have set
$$
A= \frac{1}{\kappa_\star} \int_{\Uu_0} \bbK \grad h_0^n \cdot \grad h_0^n\d\x,
\qquad 
B =  \frac{1}{\kappa_\star} \int_{\left(\Uu_0\right)^c} \bbK \grad h_0^n \cdot \grad h_0^n\d\x.
$$

Owing to~\eqref{eq:grad-hi} one has $\grad h_0^n = -\frac{\grad \varphi_0}{\tau}$ on 
$\Uu_0\subset \O$, where $s_0^n\geq\frac{\omega_\star}{N+1}$. Therefore, 
$$
A \le \frac{(N+1)\mu_0}{\omega_\star \kappa_\star} \int_{\Uu_0}
\frac{s_0^n}{\mu_0} \bbK \grad h_0^n \cdot \grad h_0^n\d\x \le 
 \frac{(N+1)\mu_0}{\tau^2 \omega_\star \kappa_\star} \int_{\O}
\frac{s_0^n}{\mu_0} \bbK \grad \varphi_0^n \cdot \grad \varphi_0^n\d\x.
$$
Then it results from formula~\eqref{eq:BB00} that
\be\label{eq:pressure_A}
A \le \frac{C}{\tau^2} W_0^2(s_0^n, s_0^{n-1})
\ee
where $C$ depends neither on $n$ nor on $\tau$.
Combining~\eqref{eq:cup-Uu} and \eqref{eq:diff_h_in=pi}, we infer
$$
B \le \frac{1}{\kappa_\star} \sum_{i=1}^N \int_{\Uu_i} 
\bbK \grad [h^n_i-(\pi_i^n+\Psi_i-\Psi_0)]\cdot \grad [h^n_i-(\pi_i^n+\Psi_i-\Psi_0)] \d\x.
$$
Using $(a+b+c)^2 \le 3 (a^2 + b^2 + c^2)$ and~\eqref{eq:Kelliptic}, we get that 
\be\label{eq:pressure_B1}
B \le \frac3{\kappa_\star}  \sum_{i=1}^N 
\int_{\Uu_i} \bbK \grad h_i \cdot \grad h_i\d\x + \frac{3 \kappa^\star}{\kappa_\star} \sum_{i=1}^N
\left(\| \grad \pi_i^n \|_{L^2}^2 +\| \grad (\Psi_i-\Psi_0) \|_{L^2}^2\right).
\ee
Similar calculations to those carried out to estimate $A$ yield 
$$
\int_{\Uu_i} \bbK \grad h_i \cdot \grad h_i\d\x \le \frac{C}{\tau^2} W_i^2(s_i^n, s_i^{n-1})
$$
for some $C$ depending neither on $n, i$ nor on $\tau$.
Combining this inequality with Lemma~\ref{lem:flow_interchange_one_step} and the regularity of $\bPsi$, we get from \eqref{eq:pressure_B1} that
\be\label{eq:pressure_B}
B \le C \left( 1 + \frac{\bW^2(\bs^n, \bs^{n-1})}{\tau^2} + \sum_{i=0}^N \frac{\Hh_\omega(s_i^{n-1}) - \Hh_\omega(s_i^n)}{\tau} \right)
\ee
for some $C$ not depending on $n$ and $\tau$ (here we also used $1/\tau\leq 1/\tau^2$ for small $\tau$ in the $W^2$ terms).
Gathering~\eqref{eq:pressure_A} and 
\eqref{eq:pressure_B} in~\eqref{eq:grad-h0-1} provides 
$$
\|\grad h^n_0\|^2_{L^2} \le C\left( 1 + \frac{\bW^2(\bs^n, \bs^{n-1})}{\tau^2} + \sum_{i=0}^N \frac{\Hh_\omega(s_i^{n-1}) - \Hh_\omega(s_i^n)}{\tau} \right).
$$

Note that (i)(ii) remain invariant under subtraction of the same constant 
$h_0^n,h_i^n\leadsto h^n_0-C,h^n_i-C$, as the gradients remain unchanged 
in (i) and only the differences $h_i^n-h_0^n$ appear in (ii) for $i\in \{1\dots N\}$.
We can therefore assume without loss of generality that $\int_\O h^n_0 \d\x =0$.
Hence by the Poincar\'e-Wirtinger inequality, we get that
$$
\|h_0^n\|^2_{H^1}\leq C\|\grad h_0^n\|^2_{L^2}\leq C\left(1+\frac{\bW^2(\bs^n,\bs^{n-1})}{\tau^2} + \sum\limits_{i=0}^N \frac{\mathcal H_\omega(s_i^{n-1})-\mathcal H_\omega(s_i^n)}{\tau}\right).
$$
Finally, from (ii) $h_i^n=h_0^n+\pi_i^n+\Psi_i-\Psi_0$, the smoothness of $\bPsi$, and using again the estimate \eqref{eq:flow_interchange_one_step} for $\|\grad\bpi^n\|^2_{L^2}$ we finally get 
that for all $i \in \{1,\dots, N\}$, one has 
\begin{multline*}
\|h_i^n\|^2_{H^1}\leq C(\|h_0^n\|_{H^1}^2+\|\pi_i^n\|_{H^1}^2+\|\Psi_i\|_{H^1}^2+\|\Psi_0\|_{H^1}^2)\\
 \leq C\left(1+\frac{\bW^2(\bs^n,\bs^{n-1})}{\tau^2} + \sum\limits_{i=0}^N \frac{\mathcal H_\omega(s_i^{n-1})-\mathcal H_\omega(s_i^n)}{\tau}\right), 
\end{multline*}
and the proof of Proposition \ref{ajustement} is complete.
 \end{proof}
We can now define the phase pressures $\left(p_i^n\right)_{i=0,\dots,N}$ by setting 
\be\label{eq:phase-pressures}
p_i^n := h_i^n - \Psi_i, \qquad \forall i \in \{0, \dots, N\}.
\ee
The following corollary is a straightforward consequence of Proposition~\ref{ajustement} 
and of the regularity of $\Psi_i$. 

\begin{coro}
\label{cor:discrete_phase_pressure}
The phase pressures $\bp^n = \left(p_i^n\right)_{0 \le i \le N} \in H^1(\O)^{N+1}$ satisfy 
 \begin{equation}
\|\bp^n\|^2_{H^1(\Omega)}
\leq 
C\left(1+\frac{\bW^2(\bs^n,\bs^{n-1})}{\tau^2} + \sum\limits_{i=0}^N \frac{\mathcal H_\omega(s_i^{n-1})-\mathcal H_\omega(s_i^n)}{\tau}\right)
\label{eq:H1_estimate_pressures_p}
 \end{equation}
 for some $C$ depending only on $\O, \Pi, \omega, \bbK, 
{(\mu_i)}_i,$ and $\bPsi$ (but neither on $n$ nor on $\tau$), 
 and the capillary pressure relations are fulfilled:
 \begin{equation}
 \label{eq:discrete_capillary}
p_i^n-p_0^n=\pi_i^n, \qquad \forall i \in \{1,\dots, N\}.
 \end{equation}
\end{coro}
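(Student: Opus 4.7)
The plan is to simply unpack the definition $p_i^n := h_i^n - \Psi_i$ and apply Proposition~\ref{ajustement} together with the regularity of the potentials $\Psi_i$. First I would verify the capillary pressure relations \eqref{eq:discrete_capillary}: subtracting the pressures gives
$$
p_i^n - p_0^n = (h_i^n - \Psi_i) - (h_0^n - \Psi_0) = (h_i^n - h_0^n) - (\Psi_i - \Psi_0),
$$
and then item (ii) of Proposition~\ref{ajustement}, which states $h_i^n - h_0^n = \pi_i^n + \Psi_i - \Psi_0$, causes the gravity terms to cancel and yields $p_i^n - p_0^n = \pi_i^n$.

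For the $H^1$ estimate \eqref{eq:H1_estimate_pressures_p}, I would use the triangle inequality in $H^1(\Omega)$ to write $\|p_i^n\|_{H^1} \le \|h_i^n\|_{H^1} + \|\Psi_i\|_{H^1}$. The first term is controlled by item (iii) of Proposition~\ref{ajustement}; the second is bounded uniformly since $\Psi_i \in \Cc^1(\ov\O)$ and $\O$ is bounded, so $\|\Psi_i\|_{H^1}$ contributes only to the constant $C$. Summing the squared norms over $i \in \{0,\dots,N\}$ and absorbing $\|\Psi_i\|_{H^1}^2$ into the ``$1$'' on the right-hand side gives the desired inequality with a constant depending only on $\O, \Pi, \omega, \bbK, (\mu_i)_i,$ and $\bPsi$.

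There is no real obstacle here: the result is essentially a rewriting of Proposition~\ref{ajustement}, where the substitution $h_i^n = p_i^n + \Psi_i$ converts the ``total potential'' formulation (i)--(iii) of that proposition into the ``phase pressure plus gravity'' formulation used in the weak formulation of Definition~\ref{Def:weak}. The only minor thing to notice is that in the difference $p_i^n - p_0^n$ the gravitational potentials $\Psi_i, \Psi_0$ cleanly cancel, which is precisely why this change of variables is natural.
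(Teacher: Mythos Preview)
Your proposal is correct and matches the paper's approach exactly: the paper states that the corollary is ``a straightforward consequence of Proposition~\ref{ajustement} and of the regularity of $\Psi_i$,'' which is precisely the argument you have written out in detail.
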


Our next result is a first step towards the recovery of the PDEs. 
\begin{lem}\label{lem:PDE-disc}
There exists $C$ depending depending only on $\O, \Pi, \omega, \bbK, 
{(\mu_i)}_i,$ and $\bPsi$ (but neither on $n$ nor on $\tau$) such that, for all 
$i \in \{0,\dots, N\}$ and all $\xi \in \Cc^2(\ov \O)$, one has 
\begin{multline}
\label{eq:discrete_weak_form}
\left|\int_\Omega \left(s_i^n-s^{n-1}_i\right)\xi\d\x + \tau\int_\Omega s_i^n\frac{\bbK}{\mu_i}\grad\left(p_i^n+\Psi_i\right)\cdot \grad\xi\d\x\right|\\
\le C W_i^2(s_i^n,s_i^{n-1})\|D^2_{g_i}\xi\|_{\infty}.
\end{multline}
\end{lem}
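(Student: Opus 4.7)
The plan is to turn the integral $\int_\Omega(s_i^n-s_i^{n-1})\xi\,\d\x$ into a Riemannian expression using the optimal transport map from $s_i^n$ to $s_i^{n-1}$, expand $\xi$ to second order along the connecting geodesic, and substitute $\grad \varphi_i^n$ by $-\tau\grad h_i^n$ on the support of $s_i^n$ via Proposition~\ref{ajustement}(i).

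First, since $s_i^n$ is bounded and in particular absolutely continuous with respect to Lebesgue measure, the Brenier-McCann theorem on the Riemannian manifold $(\O,d_i)$ provides an optimal transport map $\mathbf{t}_i^n$ such that $(\mathbf{t}_i^n)_\# s_i^n=s_i^{n-1}$, given by formula~\eqref{eq:ti}, namely $\mathbf{t}_i^n(\x)=\exp_{i,\x}(-\grad_{g_i}\varphi_i^n(\x))$. Thanks to the geodesic convexity assumption~\eqref{eq:hyp_Omega_convex}, the minimizing geodesic $\gamma_{i,\x}(t):=\exp_{i,\x}(-t\,\grad_{g_i}\varphi_i^n(\x))$, $t\in[0,1]$, stays inside $\overline\O$, so that $\xi\circ\gamma_{i,\x}$ is well defined, and
$$
\int_\Omega (s_i^n-s_i^{n-1})\xi\,\d\x=\int_\Omega s_i^n(\x)\bigl[\xi(\x)-\xi(\mathbf{t}_i^n(\x))\bigr]\d\x.
$$

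Next, since $\xi\in\mathcal C^2(\overline\O)$ and $\gamma_{i,\x}$ is a $C^1$ constant-speed geodesic with $\|\gamma'_{i,\x}(t)\|_{g_i}^2=\|\grad_{g_i}\varphi_i^n(\x)\|_{g_i}^2=d_i^2(\x,\mathbf{t}_i^n(\x))$, a second-order Taylor expansion along $\gamma_{i,\x}$ yields
$$
\xi(\mathbf{t}_i^n(\x))-\xi(\x)=-g_{i,\x}\bigl(\grad_{g_i}\xi(\x),\grad_{g_i}\varphi_i^n(\x)\bigr)+R_i^n(\x),
$$
with pointwise remainder
$$
|R_i^n(\x)|\le \tfrac12\,\|D^2_{g_i}\xi\|_\infty\,d_i^2(\x,\mathbf{t}_i^n(\x)).
$$
Integrating this against $s_i^n\,\d\x$ and using $W_i^2(s_i^n,s_i^{n-1})=\int_\Omega s_i^n\,d_i^2(\x,\mathbf{t}_i^n(\x))\,\d\x$ controls the remainder term by $\tfrac12\|D^2_{g_i}\xi\|_\infty W_i^2(s_i^n,s_i^{n-1})$.

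It remains to identify the leading term. Straightforward computation using the definition of $g_{i,\x}$ and formula~\eqref{eq:grad_gi} gives
$$
g_{i,\x}\bigl(\grad_{g_i}\xi,\grad_{g_i}\varphi_i^n\bigr)=\tfrac{1}{\mu_i}\bbK(\x)\grad\varphi_i^n(\x)\cdot\grad\xi(\x).
$$
By Proposition~\ref{ajustement}(i), $\grad\varphi_i^n=-\tau\grad h_i^n$ for $\d s_i^n$-a.e.\ $\x$, and since $h_i^n=p_i^n+\Psi_i$ by definition~\eqref{eq:phase-pressures}, the leading term equals
$$
-\int_\Omega s_i^n\,g_{i,\x}\bigl(\grad_{g_i}\xi,\grad_{g_i}\varphi_i^n\bigr)\d\x=\tau\int_\Omega s_i^n\,\tfrac{\bbK}{\mu_i}\grad(p_i^n+\Psi_i)\cdot\grad\xi\,\d\x.
$$
Combining the three previous displays yields exactly~\eqref{eq:discrete_weak_form} with $C=\tfrac12$.

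The main technical point is the second-order Taylor expansion along $\gamma_{i,\x}$: one has to argue that even though the Kantorovich potential $\varphi_i^n$ is only Lipschitz, the geodesic itself is smooth enough and $\xi\in\mathcal C^2$, so the expansion of $\xi\circ\gamma_{i,\x}$ is legitimate, and the bound on $|R_i^n|$ uses only $\|D^2_{g_i}\xi\|_\infty$ and the (geometric) length $d_i(\x,\mathbf{t}_i^n(\x))$ of the curve. Assumption~\eqref{eq:hyp_Omega_convex} is what ensures the geodesic does not escape $\O$; without it, this whole argument would need some extension of $\xi$ to $\R^d$ and an uncontrolled Hessian bound.
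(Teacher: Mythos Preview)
Your proof is correct and follows essentially the same approach as the paper: both use the pushforward $s_i^{n-1}=(\mathbf{t}_i^n)_\# s_i^n$ with $\mathbf{t}_i^n=\exp_{i,\x}(-\grad_{g_i}\varphi_i^n)$, perform a second-order Riemannian Taylor expansion of $\xi$ along the geodesic to control the remainder by $\tfrac12\|D^2_{g_i}\xi\|_\infty d_i^2(\x,\mathbf{t}_i^n(\x))$, and then replace $\grad\varphi_i^n$ by $-\tau\grad(p_i^n+\Psi_i)$ on the support of $s_i^n$ via Proposition~\ref{ajustement}(i) and~\eqref{eq:phase-pressures}. Your additional remarks on the role of the geodesic convexity assumption~\eqref{eq:hyp_Omega_convex} and on why the Lipschitz regularity of $\varphi_i^n$ is enough are welcome clarifications, not departures from the paper's argument.
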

This is of course a discrete approximation to the continuity equation $\partial_t s_i=\grad\cdot( s_i\frac{\bbK}{\mu_i}\grad\left(p_i+\Psi_i\right))$.
\begin{proof}
Let $\varphi_i^{n}$ denote the (backward) optimal Kantorovich potential from Lemma~\ref{lem:Euler_Lagrange_lin} sending $s^{n}_i$ to $s^{n-1}_i$, and let $\mathbf t_i^{n}$
be the corresponding optimal map as in~\eqref{eq:ti}.
For fixed $\xi\in \mathcal{C}^2(\overline{\Omega})$ let us first Taylor expand (in the $g_i$ Riemannian framework)
$$
\left|\xi(\mathbf t^{n}_i(\x))
-\xi(\x)+\frac{1}{\mu_i}\bbK(\x) \grad\xi(\x)\cdot\grad\varphi^{n}_i(\x)\right|
\leq\frac12\|D^2_{g_i}\xi\|_\infty d^2_i(\x,\mathbf t^{n}_i(\x)).
$$
Using the definition of the pushforward $s_i^{n-1}=\mathbf t_i^{n} \# s_i^n$, we then compute
\begin{multline*}
\left|\int_\Omega (s_i^{n}(\x)-s_i^{n-1}(\x))\xi(\x)\d\x -\int_\Omega \frac{\bbK(\x)}{\mu_i}\grad\xi(x) \cdot \grad\varphi^n_i(\x) s_i^{n}(\x)\d\x \right|\\
=\left|
\int_\Omega (\xi(\x)-\xi(\mathbf t^{n}_i(\x)) s_i^{n}(\x)\d\x
 -\int_\Omega \frac{\bbK(\x)}{\mu_i}\grad\xi(x) \cdot \grad\varphi^n_i(\x) s_i^{n}(\x)\d\x\right|\\
 \leq \int_\Omega  \frac 12\|D^2_{g_i}\xi\|_\infty d_i^2(\x,\mathbf t_i^{n}(\x))s_i^{n}(\x)\d\x=\frac 12 \|D^2_{g_i}\xi\|_\infty W_i^2(s_i^n,s_i^{n-1}).
\end{multline*}
From Proposition~\ref{prop:pressures}(i) we have $\grad\varphi_i^n=-\tau\grad h_i^n$ for $\d s_i^n$ 
a.e. $\x\in\Omega$, thus by the definition~\eqref{eq:phase-pressures} of $p_i^n$, we get 
$\grad\varphi^n=-\tau\grad(p_i^n+\Psi_i)$.
Substituting in the second integral of the left-hand side gives exactly \eqref{eq:discrete_weak_form} and the proof is complete.
\end{proof}

\section{Convergence towards a weak solution}\label{sec:convergence}
The goal is now to prove the convergence of the piecewise constant inteprolated solutions $\bs^\tau$, defined by~\eqref{eq:stau}, towards 
a weak solution $\bs$ as $\tau\to 0$.
Similarly, the $\tau$ superscript denotes the piecewise constant interpolation of any previous discrete quantity (e.g. $p_i^\tau(t)$ stands for the piecewise constant time interpolation of the discrete pressures $p^n_i$).
In what follows, we will also use the notations 
$\bs^{\tau*} = (s_1^\tau, \dots, s_N^\tau) \in L^\infty((0,T);\bXx^*)$ and 
$\bpi^\tau = \bpi(\bs^{\tau*},\x)$.
\subsection{Time integrated estimates}\label{ssec:estimates-time}
We immediately deduce from \eqref{eq:discrete_12_holder} that 
\begin{equation}
\label{eq:approx_12_Holder}
\bW(\bs^\tau(t_2),\bs^\tau(t_1))\leq C|t_2- t_1+\tau|^{\frac{1}{2}}, \qquad \forall\; 0\leq t_1\leq t_2\leq T.
\end{equation}
From the total saturation $\sum\limits_{i=0}^Ns_i^n(\x)=\omega(\x) \leq \omega^\star$ and $s_i^\tau\geq 0$, we have 
the $L^\infty$ estimates
\begin{equation}
\label{eq:pointwise_si_omega}
0 \le s^\tau_i(\x,t) \le \omega^\star \quad \text{a.e. in $Q$ for all $i \in \{0,\dots, N\}$}.
\end{equation}
\begin{lem}
 \label{lem:integrated_pressure_L2H1}
 There exists $C$ depending only on $\O, T, \Pi, \omega, \bbK, {(\mu_i)}_i$, and $\bPsi$
 such that 
 \begin{equation}
 \label{eq:integrated_pressure_L2H1}
  \|\bp^\tau\|^2_{L^2\left((0,T);H^1(\O)^{N+1}\right)}+\|\bpi^\tau\|^2_{L^2\left((0,T);H^1(\O)^{N}\right)} 
  \leq C.
 \end{equation}
\end{lem}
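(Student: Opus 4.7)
The estimate \eqref{eq:integrated_pressure_L2H1} is a time-integrated version of the pointwise-in-$n$ bounds already established, so the plan is to take the discrete estimates and sum over $n$, handling the Wasserstein and entropy contributions separately. Concretely, Corollary~\ref{cor:discrete_phase_pressure} gives
$$
\|\bp^n\|_{H^1}^2 \le C\Bigl(1+\frac{\bW^2(\bs^n,\bs^{n-1})}{\tau^2}+\sum_{i=0}^N\frac{\Hh_\omega(s_i^{n-1})-\Hh_\omega(s_i^n)}{\tau}\Bigr),
$$
and by Lemma~\ref{lem:flow_interchange_one_step} and the uniform $L^\infty$ control $0\le\pi_i^n\le\|\bpi\|_{L^\infty(\bUpsilon)}$ coming from continuity of $\bpi$ on the compact set $\bUpsilon$ together with \eqref{eq:pointwise_si_omega}, an analogous bound holds for $\|\bpi^n\|_{H^1}^2$ (the $L^2$ part being controlled by $|\Omega|\|\bpi\|_{L^\infty(\bUpsilon)}^2$ and the $\grad$ part by \eqref{eq:flow_interchange_one_step}).

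Since $\bp^\tau$ and $\bpi^\tau$ are piecewise constant on intervals of length $\tau$, one has
$$
\|\bp^\tau\|_{L^2((0,T);H^1)}^2 \le \tau\sum_{n=1}^{N_\tau}\|\bp^n\|_{H^1}^2,\qquad N_\tau:=\lceil T/\tau\rceil,
$$
and similarly for $\bpi^\tau$. Multiplying the discrete bound by $\tau$ and summing over $n\in\{1,\ldots,N_\tau\}$ produces three types of contributions. The constant $1$ yields $\tau N_\tau\le T+\tau$, which is bounded. The Wasserstein term becomes $\sum_n \bW^2(\bs^n,\bs^{n-1})/\tau$, controlled by the total square distance estimate \eqref{eq:tot_square_dist}. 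The entropy term telescopes into $\sum_{i}(\Hh_\omega(s_i^0)-\Hh_\omega(s_i^{N_\tau}))$.

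The one step that requires a brief justification is the boundedness of this telescoping sum. Since $0\le s_i^n(\x)\le\omega(\x)\le\omega^\star$ for all $n$ thanks to \eqref{eq:pointwise_si_omega}, the integrand $s\log(s/\omega)$ lies in $[-\omega/e,\,0]$ for $s\in[0,\omega]$ (the minimum being attained at $s=\omega/e$), hence
$$
-\,\frac{\omega^\star|\Omega|}{e}\le \Hh_\omega(s_i^n)\le 0\qquad\forall n\ge 0,\;\forall i,
$$
so the telescoping sum is dominated by $(N+1)\omega^\star|\Omega|/e$, uniformly in $\tau$. Plugging these three contributions back gives the desired bound for both $\bp^\tau$ and $\bpi^\tau$.

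The main (mild) obstacle is therefore ensuring that nothing escapes control in the telescoping argument, which is where the a priori $L^\infty$ confinement $\bs^n\in\bXx$ (proved in Proposition~\ref{prop:exist-JKO}) is crucial; without it the relative entropy could a priori diverge and the telescoping would be useless. All other ingredients are direct consequences of results already proved in Sections~\ref{ssec:flow-interchange} and~\ref{subsec:pressures}.
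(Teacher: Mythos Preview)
Your proposal is correct and follows essentially the same approach as the paper: sum the one-step $H^1$ estimates (Corollary~\ref{cor:discrete_phase_pressure} for $\bp^n$, Lemma~\ref{lem:flow_interchange_one_step} for $\bpi^n$) over $n=1,\dots,N_\tau$, use the total square distance estimate~\eqref{eq:tot_square_dist} for the Wasserstein contribution, and bound the telescoping entropy sum via the confinement $0\le s_i^n\le\omega$ which forces $-\|\omega\|_{L^1}/e\le\Hh_\omega(s_i^n)\le 0$. Your explicit mention of the $L^\infty$ control on $\bpi^n$ (needed to pass from the gradient bound~\eqref{eq:flow_interchange_one_step} to a full $H^1$ bound) is a detail the paper leaves implicit, but otherwise the arguments coincide.
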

\begin{proof}
Summing \eqref{eq:H1_estimate_pressures_p} from $n=1$ to $n=N_\tau :=\lceil T/\tau\rceil$, we get
\begin{align*}
 \|\bp^\tau\|^2_{L^2(H^1)} =&\; \sum\limits_{n=1}^{N_\tau}\tau \|\bp^n\|^2_{H^1}\\
 \leq&\; C\sum\limits_{n=1}^{N_\tau}\tau\left(1+\frac{\bW^2(\bs^n,\bs^{n-1})}{\tau^2} + \sum\limits_{i=0}^{N_\tau}\frac{\mathcal H_\omega(s_i^{n-1})-\mathcal H_\omega(s_i^n)}{\tau}\right)\\
 \leq&\; C\left((T+1)+\sum_{n=1}^{N_\tau}\frac{\bW^2(\bs^n,\bs^{n-1})}{\tau}+\sum\limits_{i=0}^N
 \left(\mathcal H_\omega(s_i^{0})-\mathcal H_\omega(s_i^{N_\tau})\right)\right).
\end{align*}
We use that 
$$
0 \ge \Hh_\omega(s) \ge - \frac{1}{e}\|\omega\|_{L^1} \ge -\frac{|\O|}e, \qquad \forall s \in L^\infty(\O)
\text{ with } 0\le s \le \omega
$$
together with the total square distance estimate~\eqref{eq:tot_square_dist} to infer that
$
\|\bp\|^2_{L^2(H^1)} \le C.
$
The proof is identical for the capillary pressure $\bpi^\tau$ (simply summing the one-step estimate from Lemma~\ref{lem:flow_interchange_one_step}).
\end{proof}

\subsection{Compactness of approximate solutions}\label{ssec:compact}
We denote by $H'=H^1(\Omega)'$.
\begin{lem}
\label{lem:time_equicont_H*}
For each $i \in \{0,\dots, N\}$, there 
exists $C$ depending only on $\O$, $\Pi$, $\bPsi$, $\bbK$, and $\mu_i$ (but not on $\tau$) 
such that
$$
\|s_i^\tau(t_2)-s_i^\tau(t_1)\|_{H'} \leq C |t_2-t_1+\tau|^{\frac 12}, \qquad 
\forall \;0\leq t_1\leq t_2\leq T.
$$
\end{lem}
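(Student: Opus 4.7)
The plan is to dualize and show that $|\int_\Omega(s_i^\tau(t_2)-s_i^\tau(t_1))\xi\,\d\x|\leq C\sqrt{|t_2-t_1|+\tau}\,\|\xi\|_{H^1(\Omega)}$ uniformly for $\xi\in H^1(\Omega)$. Writing $s_i^\tau(t_j)=s_i^{n_j}$ with $(n_2-n_1)\tau\leq |t_2-t_1|+\tau$ (the case $n_1=n_2$ is trivial, so assume $n_2>n_1$), I would telescope
$$\int_\Omega(s_i^{n_2}-s_i^{n_1})\xi\,\d\x=\sum_{n=n_1+1}^{n_2}\int_\Omega(s_i^n-s_i^{n-1})\xi\,\d\x.$$

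The crucial step is a one-step estimate
$$\Bigl|\int_\Omega(s_i^n-s_i^{n-1})\xi\,\d\x\Bigr|\leq C\,\|\grad\xi\|_{L^2}\,W_i(s_i^n,s_i^{n-1})\qquad\forall\xi\in H^1(\Omega).$$
I would derive this via Benamou--Brenier on the Riemannian manifold $(\Omega,d_i)$: letting $(\rho_\theta^n,v_\theta^n)_{\theta\in[0,1]}$ be the constant-speed $W_i$-geodesic from $s_i^n$ to $s_i^{n-1}$ (well defined thanks to \eqref{eq:hyp_Omega_convex}), the continuity equation $\partial_\theta\rho_\theta^n+\div(\rho_\theta^n v_\theta^n)=0$ yields
$$\int_\Omega(s_i^{n-1}-s_i^n)\xi\,\d\x=\int_0^1\!\!\int_\Omega\rho_\theta^n\, v_\theta^n\cdot\grad\xi\,\d\x\d\theta.$$
A weighted Cauchy--Schwarz in $(\x,\theta)$ combined with the Benamou--Brenier identity $\int_0^1\!\int\rho_\theta^n|v_\theta^n|_{g_i}^2=W_i^2(s_i^n,s_i^{n-1})$ bounds the right-hand side by $\bigl(\int_0^1\!\int\rho_\theta^n|\grad_{g_i}\xi|_{g_i}^2\bigr)^{1/2}W_i(s_i^n,s_i^{n-1})$. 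The uniform $L^\infty$-bound $\|\rho_\theta^n\|_\infty\leq C$---a consequence of displacement convexity of sup-type functionals along $W_i$-geodesics, available under \eqref{eq:hyp_Omega_convex} because both endpoints are bounded by $\omega^\star$---together with \eqref{eq:Kelliptic} upgrades the first factor to $\leq C\|\grad\xi\|_{L^2}$, establishing the claim.

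With the one-step estimate in hand, I would conclude by Cauchy--Schwarz in $n$:
$$\Bigl|\int_\Omega(s_i^{n_2}-s_i^{n_1})\xi\,\d\x\Bigr|\leq C\|\grad\xi\|_{L^2}\sum_{n=n_1+1}^{n_2}W_i(s_i^n,s_i^{n-1})\leq C\|\grad\xi\|_{L^2}\sqrt{n_2-n_1}\Bigl(\sum_n W_i^2\Bigr)^{1/2},$$
and the total square distance estimate \eqref{eq:tot_square_dist} yields $\sum_n W_i^2\leq C\tau$, hence the right-hand side is $\leq C\|\xi\|_{H^1}\sqrt{(n_2-n_1)\tau}\leq C\|\xi\|_{H^1}\sqrt{|t_2-t_1|+\tau}$. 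Supremizing over $\|\xi\|_{H^1}\leq 1$ proves the lemma.

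The main obstacle is the uniform $L^\infty$-bound on the displacement interpolant $\rho_\theta^n$: although plausible under \eqref{eq:hyp_Omega_convex}, it is the only nontrivial ingredient and deserves a careful justification. A robust alternative that sidesteps it entirely is to apply Lemma~\ref{lem:PDE-disc} directly: summing it over $n\in\{n_1+1,\dots,n_2\}$ and combining with Lemma~\ref{lem:integrated_pressure_L2H1}, Cauchy--Schwarz in $n$, and \eqref{eq:tot_square_dist} yields, for $\xi\in\mathcal C^2(\ov\Omega)$, the mixed bound $C\sqrt{|t_2-t_1|+\tau}\,\|\grad\xi\|_{L^2}+C\tau\,\|D^2_{g_i}\xi\|_\infty$. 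The passage from $\mathcal C^2$ to $H^1$ test functions then becomes the technical heart of the argument, requiring a careful regularization $\xi_\delta\to\xi$ that balances $\|s_i^{n_2}-s_i^{n_1}\|_{L^2}\|\xi-\xi_\delta\|_{L^2}$ (small because $s_i^\tau\leq\omega^\star$) against the blow-up of $\|D^2\xi_\delta\|_\infty$ under mollification.
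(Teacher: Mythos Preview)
Your approach is correct in substance but takes a considerably more laborious route than the paper's. The paper's proof is a three-line argument: it invokes \cite[Lemma~3.4]{MRS10}, which gives directly
\[
\left|\int_\Omega f\,(s_i^\tau(t_2)-s_i^\tau(t_1))\,\d\x\right|\le \|\grad f\|_{L^2}\,W_{\rm ref}(s_i^\tau(t_1),s_i^\tau(t_2))
\]
for \emph{any} pair of times (not just consecutive steps), using only the uniform $L^\infty$ bound~\eqref{eq:pointwise_si_omega} on the endpoints. It then converts $W_{\rm ref}$ to $W_i$ via~\eqref{eq:W_i-equiv} and plugs in the already-established H\"older estimate~\eqref{eq:approx_12_Holder}. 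No telescoping is needed here, because the telescoping was already absorbed into~\eqref{eq:discrete_12_holder}--\eqref{eq:approx_12_Holder}.

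Two points of comparison are worth noting. First, by working with the \emph{flat} Wasserstein distance $W_{\rm ref}$ rather than $W_i$, the paper sidesteps entirely the issue you flag as the ``main obstacle'': the $L^\infty$ bound on Euclidean displacement interpolants between bounded densities is classical, whereas in the Riemannian setting $(\Omega,d_i)$ it is more delicate. Second, your one-step estimate is essentially the content of \cite[Lemma~3.4]{MRS10}; re-deriving it via Benamou--Brenier and then re-telescoping duplicates work already done upstream. Your alternative via Lemma~\ref{lem:PDE-disc} is also sound but introduces an unnecessary $\Cc^2$-to-$H^1$ density argument. The moral: once a Kantorovich--Rubinstein type bound $\|\mu-\nu\|_{H'}\lesssim W(\mu,\nu)$ is available for bounded densities, apply it directly to the pair $(t_1,t_2)$ and use the metric H\"older continuity already in hand.
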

\begin{proof}
Thanks to~\eqref{eq:pointwise_si_omega}, we can apply  \cite[Lemma 3.4]{MRS10} to get 
$$
\left|\int_\Omega f \{s_i^\tau(t_2)-s_i^\tau(t_1)\}\d\x\right|\leq \|\grad f\|_{L^2(\Omega)}
W_{\rm ref}(s_i^\tau(t_1),s_i^\tau(t_2)), \qquad \forall f \in H^1(\O).
$$
Thus by duality and thanks to the distance estimate \eqref{eq:approx_12_Holder} and to the 
lower bound in~\eqref{eq:W_i-equiv}, we obtain that 
$$
\|s_i^\tau(t_2)-s_i^\tau(t_1)\|_{H'}\leq W_{\rm ref}(s_i^\tau(t_1),s_i^\tau(t_2))\leq C 
W_i(s_i^\tau(t_1),s_i^\tau(t_2))\leq C|t_2-t_1+\tau|^{\frac 12}
$$
for some $C$ depending only on $\O$, $\Pi$, $\left(\rho_i\right)_i$, 
$\g$, $\left(\mu_i\right)_i$, $\bbK$.
\end{proof}
From the previous equi-continuity in time, we deduce full compactness of the capillary pressure:
\begin{lem}
\label{lem:rel_comp_capillary}
The family $(\bpi^{\tau})_{\tau> 0}$ is sequentially relatively compact in $L^2(Q)^N$. 
\end{lem}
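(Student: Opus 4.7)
The proof relies on an Aubin--Simon type compactness argument. The plan is to first establish compactness of $\bs^{\tau*}$ in $L^2(Q)^N$, then transfer it to $\bpi^\tau$ via the Lipschitz dependence encoded in~\eqref{eq:Pi-unif-conv}.

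\textbf{Step 1: spatial regularity of $\bs^{\tau*}$.} Writing $\bs^{\tau*}(t,\x) = \bphi(\bpi^\tau(t,\x),\x)$ and differentiating in $\x$, one gets
\[
\nabla \bs^{\tau*} = \bbJ_\bz \bphi(\bpi^\tau,\cdot)\,\nabla \bpi^\tau + \nabla_\x \bphi(\bpi^\tau,\cdot).
\]
By the Jacobian bound \eqref{eq:H_pi_1} and the $\x$-regularity bound \eqref{eq:H_pi_2}, this gives $|\nabla \bs^{\tau*}| \le \frac{1}{\varpi_\star} |\nabla \bpi^\tau| + M_\bphi$. Combined with Lemma~\ref{lem:integrated_pressure_L2H1} and the $L^\infty$ estimate~\eqref{eq:pointwise_si_omega}, this shows that $(\bs^{\tau*})_{\tau>0}$ is bounded in $L^2((0,T); H^1(\O)^N)$.

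\textbf{Step 2: compactness of $\bs^{\tau*}$.} We next invoke a discrete Aubin--Simon lemma suited to piecewise constant interpolants. The inputs are the uniform bound from Step 1 (with the compact embedding $H^1(\O) \hookrightarrow\hookrightarrow L^2(\O)$, continuously included in $H'$), together with the uniform time equicontinuity from Lemma~\ref{lem:time_equicont_H*}. Indeed, the Hölder estimate there yields
\[
\int_0^{T-h}\|s_i^\tau(t+h) - s_i^\tau(t)\|_{H'}\d t \le CT\,|h+\tau|^{1/2} \xrightarrow[(h,\tau)\to 0]{} 0,
\]
which is exactly the form of uniform equicontinuity required by standard discrete Aubin--Simon statements. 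We conclude that $(\bs^{\tau*})_{\tau>0}$ is relatively compact in $L^2(Q)^N$.

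\textbf{Step 3: transfer to $\bpi^\tau$.} Given a sequence $\tau_k \to 0$, extract a subsequence along which $\bs^{\tau_k*}\to \bs^*$ strongly in $L^2(Q)^N$. The upper bound in~\eqref{eq:Pi-unif-conv} asserts that $\bpi(\cdot,\x)$ has $\varpi^\star$-Lipschitz gradient of $\Pi$, i.e.\ $|\bpi(\bs_1^*,\x)-\bpi(\bs_2^*,\x)|\le \varpi^\star|\bs_1^*-\bs_2^*|$, uniformly in $\x \in \ov\O$. Since $\bpi$ is also continuous in $\x$ on the compact set $\bUpsilon$, we deduce $\bpi^{\tau_k} = \bpi(\bs^{\tau_k*},\cdot)\to \bpi(\bs^*,\cdot)$ strongly in $L^2(Q)^N$, establishing the claimed relative compactness.

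The only delicate point is the invocation of the right variant of the Aubin--Simon lemma: because the interpolants are merely piecewise constant in time, the classical continuous-in-time version does not apply verbatim. However, the offset $\tau$ appearing in the equicontinuity estimate of Lemma~\ref{lem:time_equicont_H*} is precisely the form handled by standard discrete versions in the literature, so this is only a notational technicality. All the other ingredients are direct consequences of estimates already established.
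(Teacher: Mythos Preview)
Your proof is correct, but it takes a different route from the paper's. The paper uses the Alt--Luckhaus trick: exploiting the monotonicity of $\bpi(\cdot,\x)$, it bounds $\|\bpi^\tau(\cdot+h)-\bpi^\tau\|_{L^2(Q)}^2$ directly by the duality pairing $\int (\bpi^\tau(\cdot+h)-\bpi^\tau)\cdot(\bs^{\tau*}(\cdot+h)-\bs^{\tau*})$, which is then controlled as $\|\bpi^\tau\|_{L^2(H^1)}\cdot\|\bs^{\tau*}(\cdot+h)-\bs^{\tau*}\|_{L^\infty(H')}\le C|h+\tau|^{1/2}$ via Lemmas~\ref{lem:integrated_pressure_L2H1} and~\ref{lem:time_equicont_H*}; together with the spatial $L^2(H^1)$ bound on $\bpi^\tau$ this feeds straight into Kolmogorov's criterion, and $\bs^{\tau*}\in L^2(H^1)$ is never needed. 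Your argument instead upgrades the spatial regularity of $\bs^{\tau*}$ first, applies a discrete Aubin--Simon lemma to $\bs^{\tau*}$, and then transfers the compactness back to $\bpi^\tau$ via the Lipschitz bound on $\bpi(\cdot,\x)$. Both proofs rest on the same two estimates, but the paper's is more direct and leverages the monotone structure of the capillary relation; yours is a more conventional black-box use of Aubin--Simon, at the cost of the extra detour through $\bs^{\tau*}$. One minor remark on your Step~3: the continuity of $\bpi$ in $\x$ is irrelevant there, since the second argument is the same on both sides and the Lipschitz bound in the first variable alone gives $|\bpi^{\tau_k}-\bpi(\bs^*,\cdot)|\le\varpi^\star|\bs^{\tau_k*}-\bs^*|$.
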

\begin{proof}
 We use Alt \& Luckhaus' trick \cite{AL83} (an alternate solution would consist in slightly adapting the nonlinear time compactness results 
\cite{Moussa16,ACM} to our context).
Let $h>0$ be a small time shift,
then {by monotonicity} and Lipschitz continuity of the capillary pressure function $\bpi(.,\x)$
\begin{multline*}
\|\bpi^{\tau}(\cdot+h) - \bpi^{\tau}(\cdot)\|^2_{L^2((0,T-h);L^2(\O)^N)} \\
\le \frac1{\kappa_\star} \int_0^{T-h} \!\!\int_\Omega (\bpi^{\tau}(t+h,\x) - \bpi^{\tau}(t,\x))\cdot 
 (\bs^{\tau*}(t+h,\x) - \bs^{\tau*}(t,\x)) \d\x \d t \\
 \leq \frac{2\sqrt{T}}{\kappa_\star} \| \bpi^{\tau}\|_{L^2((0,T);H^1(\O)^N)} 
 \|\bs^{\tau*}(\cdot+h,\cdot) - \bs^{\tau*}\|_{L^\infty((0,T-h);H')^N}.
\end{multline*}
Then it follows from Lemmas~\ref{lem:integrated_pressure_L2H1} and~\ref{lem:time_equicont_H*} that 
there exists $C>0$, depending neither on $h$ nor on $\tau$, such that 
$$
\|\bpi^\tau(\cdot+h,\cdot) - \bpi^\tau\|_{L^2((0,T-h);L^2(\O)^N)}  \le C |h+\tau|^{1/2}.
$$
On the other hand, the (uniform w.r.t. $\tau$) $L^2((0,T);H^1(\O)^N)$- and $L^\infty(Q)^N$-estimates on $\bpi^\tau$ ensure
that 
$$
\|\bpi^\tau(\cdot,\cdot+\by)) - \bpi^\tau\|_{L^2(0,T;L^2)} \le C\sqrt{|\by|}(1+\sqrt{|\by|}), \qquad \forall\, \by \in \R^d,
$$
where $\bpi^\tau$ is extended by $0$ outside $\Omega$. 
This allows to apply Kolmogorov's compactness theorem (see, for instance,~\cite{HOH10}) and entails the desired relative compactness.
\end{proof}

\subsection{Identification of the limit}\label{ssec:identify}
In this section we prove our main Theorem~\ref{thm:main}, and the proof goes in two steps: 
we first retrieve strong convergence of the phase contents $\bs^\tau\to \bs$ and weak 
convergence of the pressures $\bp^\tau\rightharpoonup\bp$, and then use the strong-weak 
limit of products to show that the limit is a weak solution. 
All along this section, $\left(\tau_k\right)_{k\ge1}$ denotes a sequence of times steps 
tending to $0$ as $k\to\infty$.

\begin{lem}\label{lem:exists_limit}
There exist $\bs\in L^\infty(Q)^{N+1}$ 
with $\bs(\cdot,t)\in \bXx \cap \bAa$ 
for a.e. $t \in (0,T)$, and $\bp\in L^2((0,T);H^1(\O)^{N+1})$ such that, up to an 
unlabeled subsequence, the following convergence properties hold:
\begin{align}
\label{eq:ae-conv}
\bs^{\tau_k}& \underset{k\to\infty}\longrightarrow \bs \quad 
\text{
a.e. in $Q$},  \\
\label{eq:weak-conv-capi}
\bpi^{\tau_k}& \underset{k\to\infty}{-\!\!\!\rightharpoonup} 
\bpi(\bs^*,\cdot)
\quad \text{weakly in $L^2((0,T);H^1(\O)^N)$}, \\
\label{eq:weak-conv-pressures}
\bp^{\tau_k}& \underset{k\to\infty}{-\!\!\!\rightharpoonup} 
\bp \quad \text{weakly in $L^2((0,T);H^1(\O)^{N+1})$}.
\end{align}
Moreover, the capillary pressure relations~\eqref{eq:pi_i} hold.
\end{lem}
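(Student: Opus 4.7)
The plan is to combine the weak compactness of the pressures from Lemma~\ref{lem:integrated_pressure_L2H1} with the strong compactness of the capillary pressure field from Lemma~\ref{lem:rel_comp_capillary}, and then to promote the latter to strong compactness of $\bs^\tau$ itself by inverting the capillary pressure map $\bpi(\cdot,\x)$.

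First, from the bound \eqref{eq:integrated_pressure_L2H1} on $\bp^\tau$, reflexivity of $L^2((0,T);H^1(\O))^{N+1}$ immediately yields a subsequence (not relabeled) such that $\bp^{\tau_k}\rightharpoonup\bp$ in that space, which establishes \eqref{eq:weak-conv-pressures}. Applying the same reflexivity argument to $\bpi^\tau$ and combining with Lemma~\ref{lem:rel_comp_capillary}, I extract a further common subsequence along which $\bpi^{\tau_k}\to\bz$ strongly in $L^2(Q)^N$ and a.e.\ in $Q$, and weakly in $L^2((0,T);H^1(\O)^N)$, for some $\bz$.

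Next, I exploit the uniform bounds \eqref{eq:H_pi_1}--\eqref{eq:H_pi_2} on $\bphi=\bpi^{-1}$, which make $\bphi(\cdot,\x)$ Lipschitz with constant $1/\varpi_\star$ uniformly in $\x$. Since by construction $\bs^{\tau_k*}(\x,t)=\bphi(\bpi^{\tau_k}(\x,t),\x)$, the a.e.\ convergence of $\bpi^{\tau_k}$ transfers to a.e.\ convergence of $\bs^{\tau_k*}$ towards $\bs^*:=\bphi(\bz,\cdot)$, and the $L^\infty$ bound \eqref{eq:pointwise_si_omega} together with dominated convergence upgrades this to strong $L^p(Q)^N$ convergence for every $p<\infty$. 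Setting $s_0:=\omega-\sum_{i=1}^N s_i$ and $\bs:=(s_0,\bs^*)$, I obtain \eqref{eq:ae-conv}, and the identity $\bz=\bpi(\bs^*,\cdot)$ follows from bijectivity of $\bpi(\cdot,\x)$, so that \eqref{eq:weak-conv-capi} and the capillary pressure relations~\eqref{eq:pi_i} are both established.

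Finally, the membership $\bs(\cdot,t)\in\bXx\cap\bAa$ follows by passing the pointwise constraints $\bs\geq 0$ and $\sum_i s_i=\omega$ to the a.e.\ limit, and by passing the mass identity $\int_\O s_i^{\tau_k}\d\x=m_i$ to the limit for a.e.\ $t$ via dominated convergence. The only real subtlety in the argument is the bookkeeping of successive subsequence extractions (pressures, capillary pressures, saturations) to produce a single limiting subsequence along which all the claimed convergences hold simultaneously; once the Lipschitz inversion of $\bpi$ is invoked this is entirely routine, and no genuine analytical obstacle remains since Lemma~\ref{lem:rel_comp_capillary} has already done the heavy lifting.
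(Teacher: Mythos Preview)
Your approach is essentially the same as the paper's: extract weak limits of $\bp^{\tau_k}$ and $\bpi^{\tau_k}$ from the uniform $L^2(H^1)$ bounds, use the strong $L^2$ compactness of $\bpi^{\tau_k}$ from Lemma~\ref{lem:rel_comp_capillary} together with the inverse $\bphi$ to recover a.e.\ convergence of $\bs^{\tau_k*}$, and then read off $s_0$ from the saturation constraint.

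There is, however, one genuine omission. You assert that ``\eqref{eq:weak-conv-capi} and the capillary pressure relations~\eqref{eq:pi_i} are both established'' once you have $\bz=\bpi(\bs^*,\cdot)$ from the bijectivity of $\bpi(\cdot,\x)$. This is not correct: bijectivity only identifies the limit of $\bpi^{\tau_k}$, which is \eqref{eq:weak-conv-capi}. The capillary pressure relation~\eqref{eq:pi_i} is the statement $p_i-p_0=\pi_i(\bs^*,\x)$, which links the \emph{phase pressures} $\bp$ (the weak limit of $\bp^{\tau_k}$) to $\bpi(\bs^*,\cdot)$. Nothing in your argument connects these two objects. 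What is missing is the discrete identity $p_i^{\tau_k}-p_0^{\tau_k}=\pi_i^{\tau_k}$ from Corollary~\ref{cor:discrete_phase_pressure}, which passes to the weak $L^2((0,T);H^1(\O))$ limit and yields~\eqref{eq:pi_i}. This is the step the paper performs explicitly; it is a one-line fix, but without it your limiting phase pressures $\bp$ are floating free of the saturations.
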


\begin{proof}
From Lemma~\ref{lem:rel_comp_capillary}, we can assume that
$\bpi^{\tau_k}\to \bz$ strongly in $L^2(Q)^N$
for some limit $\bz$, thus a.e. up to the extraction of an additional subsequence.
Since $\bz \mapsto \boldsymbol\phi(\bz,\x)=\bpi^{-1}(\bz,\x)$ is continuous, we have that
\begin{align*}
\bs^{\tau_k*} = \bphi(\bpi^{\tau_k},\x) \underset{k\to\infty}\longrightarrow 
\bphi(\bpi,\x) =: \bs^* \quad \text{a.e. in } Q.
\end{align*}
In particular, this yields 
$\bpi^{\tau_k} \underset{k\to\infty}{\longrightarrow} \bpi(\bs^*,\cdot)$ a.e. in $Q$.
Since we had the total saturation $\sum\limits_{i=0}^Ns_i^{\tau_k}(t,\x)=\omega(\x)$, 
we conclude that the first component $i=0$ converges pointwise as well. 
Therefore, \eqref{eq:ae-conv} holds. Thanks to Lebesgue's dominated convergence 
theorem, it is easy to check that $\bs(\cdot,t) \in \bXx \cap \bAa$ for a.e. $t\in (0,T)$.
The convergences \eqref{eq:weak-conv-capi} and~\eqref{eq:weak-conv-pressures} 
are straightforward consequences of Lemma~\ref{lem:integrated_pressure_L2H1}.
Lastly, it follows from~\eqref{eq:discrete_capillary} that 
$$
p^{\tau_k}_i-p^{\tau_k}_0=\bpi_i(\bs^{\tau_k*},\cdot), \qquad \forall i \in \{1,\dots, N\}, \; 
\forall k \ge 1.
$$
We can finally pass to the limit $k\to \infty$ in the above relation thanks 
to~\eqref{eq:weak-conv-capi}--\eqref{eq:weak-conv-pressures} 
and infer 
$$
p_i-p_0=\bpi_i(\bs^*,\x)\quad \text{in }L^2((0,T);H^1(\O)), 
\qquad \forall i \in \{1,\dots, N\}.
$$
which immediately implies \eqref{eq:pi_i} as claimed.
\end{proof}

\begin{lem}\label{lem:cont-W}
Up to the extraction of an additional subsequence, the limit $\bs$ of 
$\left(\bs^{\tau_k}\right)_{k\ge1}$ belongs to $\Cc([0,T]; \bAa)$ 
where $\bAa$ is equipped with the metric $\bW$. 
Moreover, 
$
\bW(\bs^{\tau_k}(t), \bs(t)) \underset{k\to \infty}\longrightarrow 0\text{ for all $t \in [0,T]$}.
$
\end{lem}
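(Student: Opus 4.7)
The plan is to run a refined Arzel\`a--Ascoli argument for curves in the metric space $(\bAa,\bW)$, using the almost-$1/2$-H\"older estimate \eqref{eq:approx_12_Holder} as equicontinuity and the uniform $L^\infty$ bound \eqref{eq:pointwise_si_omega} as pointwise relative compactness, and then identify the limit curve with $\bs$ by means of the a.e.\ convergence \eqref{eq:ae-conv}.

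First I would establish pointwise-in-$t$ relative compactness. For each fixed $t \in [0,T]$, the family $\{\bs^{\tau_k}(t,\cdot)\}_{k\ge 1}$ is bounded in $L^\infty(\Omega;\R^{N+1})$ by $\omega^\star$ and preserves mass $m_i$, hence is relatively compact in the weak $L^1$ topology by the Dunford--Pettis theorem, with any cluster point still belonging to $\bXx\cap\bAa$. Since $\Omega$ is bounded, the second moments of the associated measures $s_i(\x)\,\d\x$ are uniformly bounded, so weak $L^1$-convergence of densities implies narrow convergence of measures, which on a bounded domain is equivalent to $W_{\rm ref}$-convergence; via \eqref{eq:W_i-equiv} this gives $W_i$-convergence, and therefore $\bW$-convergence.

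Next I would extract a pointwise $\bW$-limit on a dense countable set. Pick $D\subset[0,T]$ countable and dense. A standard diagonal extraction (combined with the previous step) produces a subsequence, still denoted by $(\tau_k)_k$, and a map $\wt\bs: D\to \bAa$ such that $\bW(\bs^{\tau_k}(t),\wt\bs(t))\to 0$ for every $t\in D$. For arbitrary $t\in[0,T]$, choose $t_j\in D$ with $t_j\to t$. The triangle inequality and \eqref{eq:approx_12_Holder} give
\begin{equation*}
\bW(\bs^{\tau_k}(t),\wt\bs(t_j)) \le C|t-t_j+\tau_k|^{1/2} + \bW(\bs^{\tau_k}(t_j),\wt\bs(t_j)),
\end{equation*}
which shows that $(\wt\bs(t_j))_j$ is Cauchy in $(\bAa,\bW)$. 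Denoting its limit by $\wt\bs(t)$, one obtains $\bW(\bs^{\tau_k}(t),\wt\bs(t))\to 0$ for every $t\in[0,T]$. Passing to the $\liminf_{k\to\infty}$ in \eqref{eq:approx_12_Holder} and using lower semi-continuity of $\bW$ with respect to weak $L^1$-convergence then yields
\begin{equation*}
\bW(\wt\bs(t_1),\wt\bs(t_2))\le C|t_2-t_1|^{1/2}, \qquad 0\le t_1\le t_2\le T,
\end{equation*}
so $\wt\bs\in \Cc([0,T];\bAa)$.

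Finally I would identify $\wt\bs$ with $\bs$. The a.e.\ convergence \eqref{eq:ae-conv}, combined with the uniform $L^\infty$ bound and Fubini, ensures via dominated convergence that $\bs^{\tau_k}(t,\cdot)\to \bs(t,\cdot)$ in $L^1(\Omega;\R^{N+1})$ for a.e. $t\in(0,T)$, hence also in $\bW$ by the equivalence recalled in the first step. Uniqueness of limits gives $\wt\bs(t)=\bs(t)$ for a.e.\ $t$, so $\bs$ has a (unique) continuous representative in $\Cc([0,T];\bAa)$ and the pointwise convergence $\bW(\bs^{\tau_k}(t),\bs(t))\to 0$ holds for every $t\in[0,T]$. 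The main subtlety to check carefully is the reduction from weak $L^1$-convergence to $\bW$-convergence for our mass-normalized bounded densities on the bounded domain $\Omega$; everything else is a routine diagonal/triangle inequality argument.
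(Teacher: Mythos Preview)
Your proposal is correct and follows essentially the same route as the paper: pointwise-in-time relative compactness in $(\bAa,\bW)$ from the uniform $L^\infty$ bound, combined with the asymptotic equicontinuity coming from \eqref{eq:approx_12_Holder}, fed into a refined Arzel\`a--Ascoli argument. The paper simply invokes \cite[Prop.~3.3.1]{AGS08} for this last step, whereas you unpack the diagonal extraction and triangle-inequality extension by hand; you are also more explicit than the paper about identifying the Arzel\`a--Ascoli limit with the $\bs$ already obtained in Lemma~\ref{lem:exists_limit} via the a.e.\ convergence \eqref{eq:ae-conv} and Fubini/dominated convergence, which is a point the paper glosses over.
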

\begin{proof}
It follows from the bounds \eqref{eq:pointwise_si_omega} on $s_i$ that for all $t \in [0,T]$, the sequence 
$\left(s_i^{\tau_k}\right)_k$ is weakly compact in $L^1(\O)$. It is also compact 
in $\Aa_i$ equipped with the metric $W_{i}$ due to the 
continuity of $W_{i}$ with respect to the weak convergence in $L^1(\O)$ 
(this is for instance a consequence of~\cite[Theorem 5.10]{Santambrogio_OTAM} 
together with the equivalence of $W_i$ with $W_{\rm ref}$ stated in~\eqref{eq:W_i-equiv}).
Thanks to~\eqref{eq:approx_12_Holder}, one has 
$$
\limsup\limits_{k\to\infty} W_i\left(s_i^{\tau_k}(t_2), s_i^{\tau_k}(t_1)\right) \le |t_2 - t_1|^{1/2}, 
\qquad \forall t_1, t_2 \in [0,T].
$$
Applying a refined version of the Arzel\`a-Ascoli theorem~\cite[Prop. 3.3.1]{AGS08} 
then provides the desired result.
\end{proof}

In order to conclude the proof of Theorem~\ref{thm:main}, it only remains to 
show that $\bs=\lim \bs^{\tau_k}$ and $\bp=\lim \bp^{\tau_k}$ satisfy the weak formulation~\eqref{eq:weak}:
\begin{prop}
 \label{prop:lim_weak_sol}
Let $\left(\tau_k\right)_{k\ge1}$ be a sequence such that the convergences
in Lemmas~\ref{lem:exists_limit} and~\ref{lem:cont-W} hold.
Then the limit $\bs$ of $\left(\bs^{\tau_k}\right)_{k\ge1}$ is a weak solution in the sense 
 of Definition~\ref{Def:weak} {(with $-\rho_i\mathbf g$ replaced by $+\nabla\Psi_i$ in the general case)}.
\end{prop}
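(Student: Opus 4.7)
The plan is to start from the discrete weak formulation of Lemma~\ref{lem:PDE-disc}, sum it up in time against a smooth test function, and pass to the limit using the convergences already established in Lemma~\ref{lem:exists_limit} and Lemma~\ref{lem:cont-W}. Fix $i\in\{0,\dots,N\}$ and $\phi\in C^\infty_c(\ov\O\times[0,T))$. For each $n\ge 1$, I would set $\xi^n(\x):=\phi(\x,n\tau)$ (with $\xi^n\equiv 0$ for $n\tau$ outside the support of $\phi$) and apply~\eqref{eq:discrete_weak_form} to $\xi=\xi^n$. Summing over $n=1,\dots,N_\tau=\lceil T/\tau\rceil$ produces, modulo an error term,
\begin{equation*}
\sum_{n\ge 1}\int_\Omega (s_i^n-s_i^{n-1})\xi^n\,\d\x+\sum_{n\ge 1}\tau\int_\Omega s_i^n\frac{\bbK}{\mu_i}\grad(p_i^n+\Psi_i)\cdot\grad\xi^n\,\d\x\;=\;R_\tau,
\end{equation*}
where $|R_\tau|\le C\|D^2\phi\|_\infty\sum_n W_i^2(s_i^n,s_i^{n-1})$. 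Since the Riemannian Hessian $D^2_{g_i}\xi^n$ is controlled by $\|\phi\|_{C^2}$ uniformly (thanks to the smoothness and uniform ellipticity of $\bbK$) and $\sum_n\bW^2(\bs^n,\bs^{n-1})\le C\tau$ by the total square distance estimate~\eqref{eq:tot_square_dist}, we have $R_\tau\to 0$.

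An Abel summation on the first sum gives
\begin{equation*}
\sum_{n\ge 1}\int_\Omega(s_i^n-s_i^{n-1})\xi^n\,\d\x\;=\;-\int_\Omega s_i^0\,\xi^1\,\d\x-\sum_{n\ge 1}\tau\int_\Omega s_i^n\,\frac{\xi^{n+1}-\xi^n}{\tau}\,\d\x,
\end{equation*}
the boundary term at $n=N_\tau$ vanishing by the compact support of $\phi$ in time. Introducing the natural piecewise constant interpolants in time, this rewrites as
\begin{equation*}
-\int_\Omega s_i^0\phi(\cdot,\tau)\,\d\x-\int_0^T\!\!\int_\Omega s_i^\tau(t,\x)\,\Delta^\tau\phi(t,\x)\,\d\x\,\d t,
\end{equation*}
where $\Delta^\tau\phi$ denotes the discrete forward time difference quotient of $\phi$. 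By smoothness of $\phi$, one has $\Delta^\tau\phi\to\p_t\phi$ uniformly on $Q$ and $\phi(\cdot,\tau)\to\phi(\cdot,0)$ uniformly on $\ov\O$. Combined with the a.e.\ convergence $\bs^{\tau_k}\to\bs$ and the uniform $L^\infty$ bound~\eqref{eq:pointwise_si_omega} (hence strong $L^p$ convergence), Lebesgue dominated convergence yields
$$
-\int_\Omega s_i^0\phi(\cdot,\tau)\,\d\x-\int_0^T\!\!\int_\Omega s_i^\tau\Delta^\tau\phi\,\d\x\,\d t\;\xrightarrow[k\to\infty]{}\;-\int_\Omega s_i^0\phi(\cdot,0)\,\d\x-\iint_Q s_i\p_t\phi\,\d\x\,\d t.
$$

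For the flux term, I would write it as $\iint_Q \bG^{\tau_k}\cdot\grad(p_i^{\tau_k}+\Psi_i)\,\d\x\,\d t$ with $\bG^\tau:=\frac{s_i^\tau}{\mu_i}\bbK\grad\phi^\tau$, where $\phi^\tau$ is the piecewise constant time interpolant at nodes $n\tau$. Using a.e.\ convergence of $s_i^{\tau_k}$, uniform convergence of $\grad\phi^\tau\to\grad\phi$, and dominated convergence, $\bG^{\tau_k}\to \frac{s_i}{\mu_i}\bbK\grad\phi$ strongly in $L^2(Q)^d$. Combined with the weak convergence $\grad p_i^{\tau_k}\rightharpoonup \grad p_i$ from~\eqref{eq:weak-conv-pressures} (and the fact that $\grad\Psi_i$ is a fixed smooth function), the strong--weak product passes to the limit and produces $\iint_Q\frac{s_i}{\mu_i}\bbK\grad(p_i+\Psi_i)\cdot\grad\phi\,\d\x\,\d t$. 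Assembling everything and using that $R_{\tau_k}\to 0$ yields the weak formulation~\eqref{eq:weak} (with $-\rho_i\g$ replaced by $+\grad\Psi_i$ in the general setting). Combined with Lemma~\ref{lem:exists_limit}, which already provides $\bp\in L^2((0,T);H^1(\Omega))^{N+1}$ and the capillary pressure relations~\eqref{eq:pi_i}, this shows that $\bs$ is a weak solution in the sense of Definition~\ref{Def:weak}.

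The only non-routine point is bookkeeping the Abel summation together with the compact support of $\phi$ in $[0,T)$, plus checking that the Riemannian Hessian norm $\|D^2_{g_i}\xi^n\|_\infty$ arising in the error bound of Lemma~\ref{lem:PDE-disc} is indeed dominated by $\|\phi\|_{C^2(\ov\O\times[0,T])}$; everything else is a direct application of the strong/weak convergence results proved in Section~\ref{ssec:identify} together with Lebesgue dominated convergence.
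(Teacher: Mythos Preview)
Your argument is correct and in fact more streamlined than the paper's. The paper proceeds in two stages: first it fixes a purely spatial test function $\xi\in C^2(\ov\O)$ and arbitrary times $t_1\le t_2$, sums~\eqref{eq:discrete_weak_form} over the corresponding range of indices, and passes to the limit to obtain the intermediate identity
\[
\int_\Omega (s_i(t_2)-s_i(t_1))\xi\,\d\x+\int_{t_1}^{t_2}\!\int_\O\frac{s_i}{\mu_i}\bbK\grad(p_i+\Psi_i)\cdot\grad\xi\,\d\x\,\d t=0,
\]
using the pointwise-in-$t$ narrow convergence from Lemma~\ref{lem:cont-W} for the boundary term. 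It then performs a \emph{second} time-discretization of a genuine space-time test function $\phi$ to upgrade this identity to~\eqref{eq:weak}. You instead test directly against $\xi^n=\phi(\cdot,n\tau)$, do a single Abel summation, and pass to the limit via dominated convergence and the strong--weak product; this avoids the intermediate formulation entirely and does not actually require Lemma~\ref{lem:cont-W}. What the paper's route buys is that the identity displayed above is of independent interest (it encodes $\p_t s_i\in L^2(0,T;H^1(\O)')$ more transparently), whereas your route is shorter and uses only the $L^1$-a.e.\ convergence of Lemma~\ref{lem:exists_limit}.
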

\begin{proof}
Let $0\leq t_1 \le t_2\leq T$, and denote 
$n_{j,k} = \left\lceil \frac{t_j}{\tau_k}\right\rceil$ and $\tilde t_j = n_{j,k} \tau_k$ for $j \in \{1,2\}$.
Fixing an arbitrary $\xi \in \Cc^2(\ov \O)$ and summing~\eqref{eq:discrete_weak_form} 
from $n=n_{1,k}+1$ to $n=n_{2,k}$ yields
\begin{multline}\label{eq:conv-presque}
 \int_\Omega (s_i^{\tau_k}(t_2)-s_i^{\tau_k}(t_1))\xi \d\x=\sum\limits_{n=n_{1,k}+1}^{n_{2,k}}
 	\int_\Omega (s_i^n-s_i^{n-1})\xi \d\x\\
 =- \int_{\tilde t_1}^{\tilde t_2} \int_\O \frac{s_i^{\tau_k}}{\mu_i} \bbK\grad\left(p_i^{\tau_k} + \Psi_i \right)
	  \cdot \grad\xi \d\x \d t
+\mathcal O\left(\sum\limits_{n=n_{1,k}+1}^{n_{2,k}}W_i^2(s_i^n,s_i^{n-1})\right).
\end{multline}
Since $0 \le \tilde t_j - t_j \le {\tau_k}$ and $\frac{s_i^{\tau_k}}{\mu_i} \bbK\grad\left(p_i^{\tau_k} + \Psi_i \right) \cdot \grad\xi$ is uniformly bounded in $L^2(Q)$, 
one has 
\begin{multline*}
\int_{\tilde t_1}^{\tilde t_2} \int_\O \frac{s_i^{\tau_k}}{\mu_i} \bbK\grad\left(p_i^{\tau_k} + \Psi_i \right)
	  \cdot \grad\xi \d\x \d t \\
	  =  \int_{t_1}^{t_2} \int_\O \frac{s_i^{\tau_k}}{\mu_i} \bbK\grad\left(p_i^{\tau_k} + \Psi_i \right)
	  \cdot \grad\xi \d\x \d t + \Oo(\sqrt{\tau_k}).
\end{multline*}
Combining the above estimate with the total square distance estimate~\eqref{eq:tot_square_dist} 
in~\eqref{eq:conv-presque}, we obtain
\begin{multline}\label{eq:conv-presque2}
 \int_\Omega (s_i^{\tau_k}(t_2)-s_i^{\tau_k}(t_1))\xi \d\x+ \int_{t_1}^{t_2} \int_\O \frac{s_i^{\tau_k}}{\mu_i} \bbK\grad\left(p_i^{\tau_k} + \Psi_i \right)
	  \cdot \grad\xi \d\x \d t
= \mathcal O\left(\sqrt{\tau_k}\right).
\end{multline}
Thanks to Lemma~\ref{lem:cont-W}, and since the convergence in $(\Aa_i, W_i)$ is equivalent to the 
narrow convergence of measures (i.e., the convergence in $\Cc(\ov \O)'$, see for instance 
\cite[Theorem 5.10]{Santambrogio_OTAM}), we get that 
 \be\label{eq:conv-time}
 \int_\Omega (s_i^{\tau_k}(t_2)-s_i^{\tau_k}(t_1))\xi \d\x \underset{k\to\infty}\longrightarrow 
  \int_\Omega (s_i(t_2)-s_i(t_1))\xi \d\x.
 \ee
Moreover, thanks to Lemma~\ref{lem:exists_limit}, one has 
\be\label{eq:conv-space}
\int_{t_1}^{t_2} \int_\O \frac{s_i^{\tau_k}}{\mu_i} \bbK\grad\left(p_i^{\tau_k} + \Psi_i \right)
	  \cdot \grad\xi \d\x \d t  \underset{k\to\infty}\longrightarrow 
	\int_{t_1}^{t_2} \int_\O \frac{s_i}{\mu_i} \bbK\grad\left(p_i + \Psi_i \right)
	  \cdot \grad\xi \d\x \d t.
\ee
Gathering~\eqref{eq:conv-presque2}--\eqref{eq:conv-space} yields, for all $\xi \in \Cc^2(\ov \O)$ 
and all $0\leq t_1\leq t_2\leq T$,
\be\label{eq:conv-alter}
\int_\Omega (s_i(t_2)-s_i(t_1))\xi \d\x + 
\int_{t_1}^{t_2} \int_\O \frac{s_i}{\mu_i} \bbK\grad\left(p_i + \Psi_i \right)
	  \cdot \grad\xi \d\x \d t = 0.
\ee

In order to conclude the proof, it remains to check that the formulation~\eqref{eq:conv-alter} 
is stronger the formulation~\eqref{eq:weak}.
Let $\eps>0$ be a time step 
(unrelated to that appearing in the minimization scheme~\eqref{eq:JKO}),
and set 
$L_\eps = \left\lfloor \frac{T}{\eps}\right\rfloor$.
Let $\phi \in \Cc^\infty_c(\ov \O \times [0,T))$, one sets $\phi_\ell = \phi(\cdot,\ell \eps)$ for 
$\ell \in \left\{0,\dots,L_\eps \right\}$. Since $t \mapsto \phi(\cdot, t)$ is compactly supported in $[0,T)$, 
then there exists $\eps^\star>0$ such that $\phi_{L_\eps} \equiv 0$ for all $\eps \in (0,\eps^\star]$.
Then define by 
$$\phi^\eps: 
\left\{ \begin{array}{rcl}
\ov \O \times [0,T] &\to& \R \\
(\x,t) & \mapsto & \phi_\ell(\x) \quad \text{ if } t \in [\ell \eps, (\ell+1)\eps).
\end{array}\right.
$$
Choose $t_1 = \ell \eps$, $t_2 = (\ell+1)\eps$, $\xi = \phi_\ell$ in~\eqref{eq:conv-alter} and 
sum over $\ell \in \{0,\dots, L_\eps-1\}$. This provides 
\be\label{eq:AB-conv}
A(\eps)+B(\eps) = 0, \qquad \forall \eps >0. 
\ee
where 
\begin{align*}
A (\eps)= & \sum_{\ell = 0}^{L_\eps -1} \int_\O \left(s_i((\ell+1)\eps) - s_i(\ell \eps)\right) \phi^\ell \d\x, \\
B (\eps) = &  \iint_Q\frac{s_i}{\mu_i} \bbK\grad\left(p_i + \Psi_i \right) \cdot \grad\phi^\eps \d\x \d t.
\end{align*}
Due to the regularity of $\phi$, $\grad \phi^\eps$ converges uniformly towards $\phi$ as $\eps$ tends to $0$, so that 
\be\label{eq:B-conv}
B(\eps) \underset{\eps \to 0} \longrightarrow  \iint_Q  \frac{s_i}{\mu_i} \bbK\grad\left(p_i + \Psi_i \right) \cdot \grad\phi \d\x \d t.
\ee
Reorganizing the first term and using that $\phi_{L_\eps} \equiv 0$, we get that 
$$
A(\eps) = - \sum_{\ell = 1}^{L_\eps} \eps \int_\O s_i(\ell \eps) \frac{\phi_\ell - \phi_{\ell-1}}{\eps} \d\x - \int_\O s_i^0 \phi(\cdot, 0) \d\x.
$$
It follows from the continuity of $t \mapsto s_i(\cdot,t)$ in $\Aa_i$ equipped with $W_i$ and from 
the uniform convergence of 
$$
(\x,t) \mapsto  \frac{\phi_\ell(\x) - \phi_{\ell-1}(\x)}{\eps} \quad \text{if}\; t \in [(\ell-1)\eps, \ell \eps)
$$
towards $\p_t \phi$ that 
\be\label{eq:A-conv}
A(\eps) \underset{\eps \to 0} \longrightarrow  
- \iint_Q  s_i\p_t \phi \d\x \d t - \int_\O s_i^0 \phi(\cdot, 0) \d\x.
\ee
Combining~\eqref{eq:AB-conv}--\eqref{eq:A-conv} shows that the weak formulation~\eqref{eq:weak} 
is fulfilled.
\end{proof}

\appendix

\section{A simple condition for the geodesic convexity of $(\O, d_i)$}\label{app:convex}
The goal of this appendix is to provide a simple condition on the permeability tensor in order to ensure that 
Condition~\eqref{eq:hyp_Omega_convex} is fulfilled. For the sake of simplicity, we only consider here the case 
of isotropic permeability tensors 
\be\label{eq:K-iso}
\bbK(\x) = \kappa(\x) \bbI_d, \qquad \forall \x \in \ov \O
\ee
with $\kappa_\star \le \kappa(\x) \le \kappa^\star$ for all $\x \in \ov \O$.
Let us stress that the condition we provide is not optimal.

As in the core of the paper, $\O$ denotes a convex open subset of $\R^d$ with $C^2$ boundary $\p\O$. 
For $\ov \x \in \p\O$, we denote by $\n(\ov \x)$ the outward-pointing normal.
Since $\p\O$ is smooth, then there exists $\ell_0 >0$ such that, for all $\x\in \O$ such that 
$\text{dist}(\x, \p\O) < \ell_0$, there exists a unique $\ov \x \in \p\O$ such that $\text{dist}(\x, \p\O) = |\x - \ov \x|$
(here $\text{dist}$ denotes the usual Euclidian distance between sets in $\R^d$).
As a consequence, one can rewrite 
$\x = \ov \x - \ell \n(\ov\x)$ for some $\ell \in (0,\ell_0)$.

In what follows, a function $f: \ov \O \to \R$ is said to be normally nondecreasing (resp. nonincreasing) 
on a neighborhood of $\p\O$ if there exists $\ell_1 \in (0,\ell_0]$ such that 
$\ell \mapsto f(\ov \x - \ell \n(\ov \x))$ is nonincreasing (resp. nondecreasing) on $[0, \ell_1].$

\begin{prop}\label{prop:O-conv}
Assume that:
\begin{enumerate}[(i)]
\item  the permeability field $\x \mapsto \kappa(\x)$ is normally non-increasing in a neighborhood of $\p\O$;
\item  for all $\ov \x \in \p\O$, either $\grad \kappa(\ov \x) \cdot \n(\ov \x) < 0$, or $\grad \kappa(\ov \x) \cdot \n(\ov \x)= 0$ 
and $D^2 \kappa(\ov \x)\n(\x) \cdot \n(\x) =0.$ 
\end{enumerate}
Then there exists a $C^2$ extension $\wt \kappa: \R^d \to [\frac{\kappa_\star}2, \kappa^\star]$ of $\kappa$ and a Riemannian metric 
\be\label{eq:delta-tilde}
\wt \delta(\x,\by) = \inf_{\bga\in \wt P(\x, \by)} \left( \int_0^1 \frac1{\wt \kappa(\bga(\tau))} |\bga'(\tau)|^2{\rm d}\tau\right)^{1/2}, \qquad \forall \x, \by \in \R^d
\ee
with
$
\wt P(\x,\by) = \left\{\bga \in C^1([0,1]; \R^d)\, \middle| \, \bga(0) = \x\;  \text{and}\; 
\bga(1) = \by\right\}, 
$
such that $(\O, \wt \delta_i)$ is geodesically convex. 
\end{prop}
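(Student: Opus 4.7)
The plan is twofold: first, construct $\wt \kappa$ by extending $\kappa$ smoothly across $\p\O$ so that it strictly decreases in the outward normal direction, then establish geodesic convexity of $\O$ in $(\R^d,\wt \delta)$ via a maximum-principle argument applied to the signed distance function to $\p\O$. For the construction, in a tubular neighborhood $\{|d|<\ell_0\}$ where the signed distance $d$ (negative inside, positive outside) is $C^2$, I would parametrize points near $\p\O$ as $\x=\ov\x+t\n(\ov\x)$ with $t=d(\x)$. On $\{t>0\}$ the goal is a $C^2$ extension that is strictly decreasing in $t$ near $t=0$. Condition~(ii) is precisely what allows this: when $\grad\kappa(\ov\x)\cdot\n(\ov\x)<0$, a first-order Taylor extrapolation in $t$ already produces the strict outward decrease, whereas in the degenerate case $\grad\kappa\cdot\n=0$ and $D^2\kappa\,\n\cdot\n=0$, I would use a cubic ansatz $\wt \kappa(\ov\x+t\n)=\kappa(\ov\x)-a(\ov\x)\,t^3$ with $a>0$ smooth, which matches $\kappa$ to order two at $t=0$ while strictly decreasing for small $t>0$. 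Condition~(i) ensures this local recipe is globally consistent with the interior profile. A smooth cutoff then glues this extension to the constant $\kappa_\star/2$ far from $\p\O$, producing a global $C^2$ function with values in $[\kappa_\star/2,\kappa^\star]$ and with $\grad\wt \kappa\cdot\grad d\le 0$ throughout the exterior transition region.

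For geodesic convexity, let $\gamma:[0,1]\to\R^d$ be a $\wt \delta$-geodesic with $\gamma(0),\gamma(1)\in\O$, and suppose by contradiction that $\gamma$ exits $\O$. Then $f:=d\circ\gamma$ is negative at the endpoints but attains a positive maximum at some $s^\star\in(0,1)$, so $f'(s^\star)=0$ and $f''(s^\star)\le 0$. The geodesic equation $\gamma''=-\Gamma(\gamma',\gamma')$ for the conformally-flat metric $\wt g_{ij}=\delta_{ij}/\wt \kappa$, whose Christoffel symbols read
\[
\Gamma^k_{ij}=-\frac{1}{2\wt \kappa}\bigl(\p_i\wt \kappa\,\delta_{jk}+\p_j\wt \kappa\,\delta_{ik}-\p_k\wt \kappa\,\delta_{ij}\bigr),
\]
combined with the criticality condition $\grad d\cdot\gamma'(s^\star)=0$, yields after direct computation
\[
f''(s^\star)=D^2 d\bigl(\gamma'(s^\star),\gamma'(s^\star)\bigr)\;-\;\frac{|\gamma'(s^\star)|^2}{2\wt \kappa(\gamma(s^\star))}\bigl(\grad\wt \kappa\cdot\grad d\bigr)(\gamma(s^\star)).
\]
The first term is nonnegative because convexity of $\O$ forces the sublevel sets $\{d\le c\}$ to be convex, so $D^2 d$ is positive semidefinite on directions tangent to $\{d=f(s^\star)\}$; the second is nonnegative by the outward-decay of $\wt \kappa$ built into the construction. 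A strict inequality coming from either the strict Euclidean convexity of $\O$ or the strict outward-decay of $\wt \kappa$ at $\gamma(s^\star)$ gives $f''(s^\star)>0$, contradicting $f''(s^\star)\le 0$.

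The main obstacle is twofold. First, guaranteeing that the outward-monotonicity $\grad\wt \kappa\cdot\grad d\le 0$ persists \emph{globally} in the exterior region where $s^\star$ may lie, not merely infinitesimally at $\p\O$: this forces a careful global gluing via cutoff, and possibly a very thin transition region so that exterior excursions are heavily penalized. Second, in the degenerate case of condition~(ii) where both first and second normal derivatives of $\kappa$ vanish at $\p\O$, the cubic extension demands delicate third-order compatibility and the strict inequality for $f''(s^\star)$ must be extracted from a non-vanishing third-order profile of $\wt \kappa$, which can be mixed with the non-degenerate case along $\p\O$ through a smooth partition of unity. A fallback if the second-order argument fails at isolated degenerate points would be to compare $\gamma$ with a competitor path obtained by piecewise reflection across $\p\O$ back into $\O$, and show via the outward decay of $\wt \kappa$ that the inside competitor has strictly smaller $\wt \delta$-cost.
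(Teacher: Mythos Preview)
Your construction of the extension $\wt\kappa$ is close in spirit to the paper's, though the paper simply uses the second-order normal Taylor polynomial outside $\O$ (which, in the degenerate case of (ii), gives a \emph{constant} normal profile rather than your strictly decreasing cubic) and then convexly interpolates with the constant $\kappa_\star/2$ via a cutoff in the distance variable.

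The real difference is in the geodesic-convexity step. The paper does not use the geodesic equation or a maximum principle at all. Instead, given $\x,\by\in\O$ and a putative minimizing geodesic $\bga$ that wanders outside the inner set $\ov\O_\eps=\{\operatorname{dist}(\cdot,\p\O)\ge\eps\}$, it builds the competitor $\bga_\eps:=\operatorname{proj}_{\ov\O_\eps}\circ\,\bga$ by Euclidean projection onto the convex set $\ov\O_\eps$. Two monotonicities then do the work: $|\bga_\eps'|\le|\bga'|$ (projections onto convex sets are $1$-Lipschitz) and $\wt\kappa(\bga_\eps)\ge\wt\kappa(\bga)$ (normal non-increase of $\wt\kappa$), yielding a strictly smaller action and a contradiction. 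The crucial point is that the strict inequality comes from the \emph{projection} shortening the Euclidean speed on a nontrivial time interval, not from any strict outward decay of $\wt\kappa$. This is precisely why the paper can get away with the bare Taylor extension (constant in the degenerate case) and does not need your cubic fix. Your second-derivative computation of $f''(s^\star)$ is correct, but as you yourself flag, it only gives $f''(s^\star)\ge 0$ unless you secure either strict Euclidean convexity of $\O$ or strict outward decay of $\wt\kappa$ \emph{at the excursion point}, and the latter may fail once the cutoff has flattened $\wt\kappa$ to $\kappa_\star/2$. Your proposed fallback---replace $\gamma$ by an interior competitor and compare costs---is exactly the paper's route (with projection playing the role of your reflection), and is the cleaner path here.
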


\begin{proof}
Since $\O$ is convex, then for all $\x \in \R^d \setminus \O$, there exists a unique $\ov \x \in \p\O$ 
such that $\text{dist}(\x,\O)=|\x - \ov \x|$. Then one can extend $\kappa$ in a $C^2$ way into the whole $\R^d$ by defining 
$$
\kappa(\x) = \kappa(\ov \x) + |\x-\ov \x| \grad \kappa(\ov \x) \cdot \n(\ov \x) +\frac{|\x-\ov \x|^2}2 D^2\kappa(\ov \x) \n(\ov \x) \cdot \n(\ov \x), 
\quad \forall \x \in \R^d \setminus \O.
$$
Thanks to Assumptions (i) and (ii), the function $\ell \mapsto \kappa(\ov \x - \ell \n(\ov \x))$ is non-decreasing on $(-\infty, \ell_1]$ 
for all $\ov \x \in \p\O$. Since $\p\O$ is compact, there exists $\ell_2 >0$ such that 
$$\kappa(\ov \x - \ell \n(\ov \x) \ge \frac{\kappa_\star}2, \qquad \forall \ell \in (-\ell_2, 0].$$
Let $\rho: \R_+\to \R$ be a non-decreasing $C^2$ function such that $\rho(0) = 1$, $\rho'(0) = \rho''(0)=0$ and $\rho(\ell) = 0$ for all $\ell \ge \ell_2$. Then define 
$$
\wt \kappa(\x) = \rho(\text{dist}(\x,\O)) \kappa(\x) + (1-\rho(\text{dist}(\x,\O))) \frac{\kappa_\star}2, \qquad \forall \x \in \R^d, 
$$
so that the function $\ell \mapsto \wt \kappa(\ov \x - \ell \n (\ov \x))$ is non-increasing on $(-\infty, \ell_1)$ and bounded from below by $\frac{\kappa_\star}2$.

Let $\x, \by \in \O$, then there exists $\eps>0$ such that $\text{dist}(\x, \p\O) \ge \eps$,  $\text{dist}(\by, \p\O) \ge \eps$,  
and $\kappa$ is normally nonincreasing on $\p\O_\eps:= \{\x \in \ov \O \; | \; \text{dist}(\x, \p\O) < \eps\}.$ A sufficient condition for $(\O, \wt \delta)$ 
to be geodesic is that the geodesic $\bga_{\x,\by}^{\text{opt}}$ from $\x$ to $\by$ is such that 
\be\label{eq:geo-eps}
\text{dist}\left(\bga_{\x,\by}^{\text{opt}}(t), \p\O\right) \ge \eps, \quad \forall t \in [0,1].
\ee
In order to ease the reading, we denote by $\bga = \bga_{\x,\by}^{\text{opt}}$ {any geodesic} such that  
\be\label{eq:bga_opt-delta}
\wt \delta^2(\x,\by) =\int_0^1 \frac1{\wt \kappa(\bga(\tau))} |\bga'(\tau)|^2{\rm d}\tau. 
\ee
We define the continuous and piecewise $C^1$
path $\bga_\eps$ from $\x$ to $\by$ by setting 
\be\label{eq:bga_eps}
\bga_\eps(t) = \text{proj}_{\ov \O_\eps}
(\bga(t)), \qquad \forall t \in [0,1],
\ee
where $\ov \O_\eps := \{ \x \in \O\; | \; \text{dist}(\x,\p\O) \ge \eps\}$ is convex, 
and the orthogonal (w.r.t. the euclidian distance $\text{dist}$) projection $\text{proj}_{\ov \O_\eps}$ onto $\ov \O_\eps$ 
is therefore uniquely defined. 

Assume that Condition~\eqref{eq:geo-eps} is violated.
Then by continuity there exists a non-empty interval $[a,b] \subset [0,1]$ such that 
$$\text{dist}(\bga(t), \p\O) < \eps, \qquad \forall t \in (a,b), $$
 the geodesic between $\gamma(a)$ and $\gamma(b)$ coincides with the part of the geodesics between $\x$ and $\by$. 
Then, changing $\x$ into $\bga(a)$ and $\by$ into $\bga(b)$, we can assume without loss of generality that 
$$\text{dist}(\bga(t), \p\O) < \eps, \qquad \forall t \in (0,1).$$
It is easy to verify that 
\be\label{eq:comp-bga_eps-bga}
|\bga'_\eps(t)| \le |\bga'(t)|, \quad \forall t \in [0,1], \qquad \text{and} \quad 
|\bga'_\eps(t)| < |\bga'(t)| \;\text{on}\; (a,b)
\ee
for some non-empty interval $(a,b) \subset [0,1]$.
It follows from~\eqref{eq:delta-tilde} that 
$$
\wt \delta^2(\x,\by) \le \int_0^1  \frac1{\wt \kappa(\bga_\eps(\tau))} |\bga_\eps'(\tau)|^2{\rm d}\tau.
$$
Since $\kappa$ is normally non-increasing, one has 
$$
\wt \delta^2(\x,\by) \le \int_0^1  \frac1{\wt \kappa(\bga(\tau))} |\bga_\eps'(\tau)|^2{\rm d}\tau.
$$
Thanks to~\eqref{eq:comp-bga_eps-bga}, one obtains that 
$$
\wt \delta^2(\x,\by) <  \int_0^1  \frac1{\wt \kappa(\bga(\tau))} |\bga'(\tau)|^2{\rm d}\tau, 
$$
providing a contradiction with the optimality \eqref{eq:bga_opt-delta} of $\bga$. Thus Condition~\eqref{eq:geo-eps}
holds, hence $(\O,\delta)$ is a geodesic space. 
\end{proof} 

\section{A multicomponent bathtub principle}\label{app:bathtub}

The following theorem can be seen as a generalization of the classical scalar bathtub principle (see for instance \cite[Theorem 1.14]{LL01}). 
In what follows, $N$ is a positive integer and $\O$ denotes an arbitrary measurable subset of $\R^d$. 

\begin{thm}
\label{theo:bathtub}
Let $\omega\in L^1_+(\Omega)$, and let $\boldsymbol{m}=(m_0,\dots,m_N)\in (\R_+^*)^{N+1}$ 
 be such that $\sum_{i=0}^N m_i= \int_\O \omega \,\d\x$. We denote by 
 $$
 \bXx\cap\bAa=\left\{ \bs=(s_0,\dots,s_N) \in L^1_+(\O)^{N+1}\middle| 
 \; \int_\O s_i \d\x =m_i\text{ and }\sum_{i=0}^N s_i=\omega\,  \text{a.e. in $\O$}\right\}.
 $$
Then for any $\bF=(F_0,\dots,F_N)\in (L^\infty(\O))^{N+1}$, the functional
 $$
 \mathcal F: \bs \mapsto \int_\O \bF\cdot \bs \,\d\x
 $$
 has a minimizer in $\bXx\cap\bAa$.
 Moreover, there exists $\balpha=(\alpha_0,\dots,\alpha_N)\in\R^{N+1}$ such that, denoting
 $$
 \lambda(\x):=\min\limits_{0 \le j \le N}\{F_j(\x)+\alpha_i\}, \qquad \x \in \O,
 $$
 any minimizer $\un \bs = \left(\un{s}_0, \dots, \un{s}_N\right)$ satisfies
 $$
 F_i+\alpha_i=\lambda \qquad \d \un s_i\text{-a.e. in }\Omega,  \quad \forall i \in \{0, \dots, N\}.
 $$
\end{thm}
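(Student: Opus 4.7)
My plan is to treat the minimization as an infinite-dimensional linear program and extract $\balpha$ via Lagrangian duality on the integral mass constraints $\int s_i\,\d\x=m_i$. Existence of a minimizer is immediate from weak $L^1$ compactness: every $\bs\in\bXx\cap\bAa$ satisfies $0\le s_i\le\omega\in L^1(\O)$, so $\bXx\cap\bAa$ is equi-integrable, closed under weak $L^1$ convergence (all three constraints pass to the limit), and $\Ff$ is weakly $L^1$-continuous since $\bF\in L^\infty$; Dunford--Pettis then concludes.

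Next I would introduce the concave continuous dual function
$$
g(\balpha):=-\sum_{i=0}^N \alpha_i m_i+\int_\O\omega(\x)\,\lambda_\balpha(\x)\,\d\x,
\qquad \lambda_\balpha(\x):=\min_{j}\bigl(F_j(\x)+\alpha_j\bigr),
$$
for which weak duality $g(\balpha)\le\Ff(\bs)$ is immediate from $\sum_i s_i=\omega$ and $\int s_i\,\d\x=m_i$. Since $\sum_i m_i=\int_\O\omega$, the function $g$ is invariant under the diagonal shift $\balpha\mapsto\balpha+c\1$. Restricted to the hyperplane $H=\{\sum\alpha_i=0\}$, the trivial bound $\lambda_\balpha\le\|\bF\|_\infty+\min_j\alpha_j$ yields $g(\balpha)\le C+\sum_j(\alpha_{\min}-\alpha_j)m_j$, which tends to $-\infty$ as $\|\balpha\|\to\infty$ on $H$ (the term for the index realizing $\alpha_{\max}$ drives the loss, using $m_j>0$). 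Continuity plus coercivity then yield a maximizer $\balpha^*\in H$.

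The main step is strong duality: reconstructing a minimizer supported on the contact sets $A_i:=\{F_i+\alpha_i^*=\lambda^*\}$, where $\lambda^*:=\lambda_{\balpha^*}$. Since $\lambda^*$ is attained pointwise, $\bigcup_i A_i=\O$; setting $A^*(\x):=\{i:\x\in A_i\}$, I introduce the convex set
$$
V:=\Bigl\{\Bigl(\textstyle\int_\O\omega\,\theta_i\,\d\x\Bigr)_{i=0,\dots,N}\;\Big|\;\theta_i\ge 0,\ \textstyle\sum_i\theta_i=1,\ \theta_i=0 \text{ a.e.\ on }A_i^c\Bigr\}\subset\R^{N+1}.
$$
The set of admissible $\theta$'s is nonempty (take $\theta_i=\1_{A_i\setminus\cup_{j<i}A_j}$), convex, and weakly-$*$ compact in $L^\infty$, so $V$ is convex and compact in $\R^{N+1}$. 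If $\boldsymbol m\notin V$, Hahn--Banach produces $\bw\in\R^{N+1}$ with $\sum w_i m_i>\sup_{\bv\in V}\bw\cdot\bv=\int_\O\omega\,\max_{i\in A^*(\x)}w_i\,\d\x$, the last equality by pointwise maximization on the simplex. Replacing $\bw$ by $-\bw$ this reads $-\sum w_i m_i+\int_\O\omega\,\min_{i\in A^*(\x)}w_i\,\d\x>0$, which is precisely the one-sided directional derivative $g'(\balpha^*;-\bw)$: for $t>0$ small, the $\arg\min$ of $F_j+\alpha_j^*+tw_j$ is contained in $A^*(\x)$, so $\lambda_{\balpha^*-t\bw}=\lambda^*+t\min_{i\in A^*(\x)}(-w_i)$, and dominated convergence passes the derivative under the integral. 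Positivity of this derivative contradicts maximality of $\balpha^*$, so $\boldsymbol m\in V$, yielding admissible $\theta$ and the minimizer $\un s_i:=\omega\theta_i\in\bXx\cap\bAa$ with $\Ff(\un\bs)=g(\balpha^*)$.

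The pointwise characterization of every minimizer then falls out of strong duality: for any $\bs$ with $\Ff(\bs)=\min\Ff$,
$$
\Ff(\bs)=\int_\O\sum_i(F_i+\alpha_i^*)s_i\,\d\x-\sum_i\alpha_i^*m_i\ge\int_\O\omega\lambda^*\,\d\x-\sum_i\alpha_i^*m_i=g(\balpha^*)=\Ff(\bs),
$$
so equality must hold throughout, forcing $(F_i+\alpha_i^*-\lambda^*)s_i=0$ a.e.\ in $\O$, i.e.\ $F_i+\alpha_i^*=\lambda^*$ $\d s_i$-a.e. The principal difficulty is the strong-duality step: verifying compactness of $V$ in a topology compatible with Hahn--Banach separation, together with the careful identification of $\sup_V\bw\cdot\bv$ with the one-sided directional derivative of $g$ in the presence of ties in the $\arg\min$ defining $\lambda^*$---this is where the multicomponent nature of the problem ($N\ge 2$) prevents the scalar bathtub argument from reducing directly to one dimension.
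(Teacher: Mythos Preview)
Your proof is correct and follows the same duality route as the paper: same dual functional $g=J$, same weak-duality inequality, same diagonal-invariance/coercivity argument for the dual maximizer, and the same equality-case computation at the end to extract the pointwise characterization. The only difference is in how strong duality is established: the paper invokes Clarke's differentiation-under-the-integral theorem to identify the superdifferential $\partial\Lambda(\ov\balpha)$ directly with your set $V$ and then reads $\boldsymbol m\in V$ off from the first-order condition $\mathbf 0\in\partial J(\ov\balpha)$, whereas you prove $\boldsymbol m\in V$ by hand via Hahn--Banach separation and a one-sided directional-derivative computation---effectively an elementary proof of that instance of Clarke's result. One bookkeeping slip: after your relabeling $\bw\mapsto -\bw$, the positive quantity $-\sum w_im_i+\int_\O\omega\min_{i\in A^*(\x)}w_i\,\d\x$ is $g'(\balpha^*;\bw)$, not $g'(\balpha^*;-\bw)$, and the subsequent line mixes $F_j+\alpha_j^*+tw_j$ with $\lambda_{\balpha^*-t\bw}$; once the direction is made consistent the argument goes through unchanged.
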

One can think of this as: $\un s_i=0$ in $\{F_i+\alpha_i>\lambda\}$ and $F_i+\alpha_i\geq \lambda$ everywhere, i.e., $\un s_i>0$ can only occur in the ``contact set'' $\left\{\x\middle| \;
F_i(\x)+\alpha_i=\min\limits_j (F_j(\x)+\alpha_j)\right\}$.
\begin{proof}
For the existence part, note that $\mathcal F$ is continuous for the weak $L^1$ convergence, and that $\bXx\cap\bAa$ is weakly closed.
Since $\sum s_i=\omega$ and $s_i\geq 0$ we have in particular $0\leq s_i\leq \omega\in L^1$ for all $i$ and $\bs\in \bXx\cap\bAa$.
This implies that $\bXx\cap\bAa$ is uniformly integrable, and since the mass $\|s_i\|_{L^1}=\int s_i=m_i$ is prescribed, the Dunford-Pettis theorem shows that $\bXx\cap\bAa$ is $L^1$-weakly relatively compact.
Hence from any minimizing sequence we can extract a weakly-$L^1$ converging subsequence, and by weak $L^1$ continuity the weak limit is a minimizer.

Let us now introduce a dual problem: for fixed $\balpha=(\alpha_0,\dots,\alpha_N)\in\R^{N+1}$ we denote
\begin{equation}
\label{eq:def_lambda_alpha}
\lambda_{\balpha}(\x):=\min\limits_i\{F_i(\x)+\alpha_i\}
\end{equation}
and define
$$
J(\balpha):=\int_\O \lambda_{\balpha}(\x)\omega(\x)\d\x -\sum_{i=0}^N\alpha_im_i.
$$
We shall prove below that 
\begin{enumerate}[(i)]
\item  $\sup\limits_{\balpha \in\R^{N+1}} J(\balpha)=\max\limits_{\balpha \in\R^{N+1}} J(\balpha)$ is achieved,
\item $\min\limits_{\bs\in \bXx\cap\bAa}\mathcal F (\bs) =\max\limits_{\balpha  \in\R^{N+1} }J(\balpha)$.
\end{enumerate}
The desired decomposition will then follow from equality conditions in (ii), and $\lambda(\x)=\lambda_{\ov \balpha}(\x)$ will be retrieved from any maximizer $\ov\balpha\in \operatorname{Argmax}J$.
\begin{rem}
The above dual problem can be guessed by introducing suitable Lagrange multipliers $\lambda(\x),\balpha$ for the total saturation and mass constraints, respectively, and writing the convex indicator of the constraints as a supremum over these multipliers.
Formally exchanging $\inf\sup =\sup \inf$ and computing the optimality conditions in the right-most infimum relates $\lambda$ to $\balpha$ as in \eqref{eq:def_lambda_alpha}, which in turn yields exactly the duality $\inf\limits_{\bs}\mathcal F=\max\limits_{\balpha}J$. See also Remark \ref{bath}
\end{rem}

\medskip
Let us first establish property (i).
 For all $\balpha\in\R^{N+1}$ and all $\bs\in \bXx\cap\bAa$, we first observe that
 \begin{align*}
  J(\balpha)=&\int_\O  \min\limits_j\{F_j(\x)+\alpha_j\}\omega(\x) \d\x-\sum_{i=0}^N \alpha_i m_i\\
  \nonumber
  =&\int_\O  \min\limits_j\{F_j(\x)+\alpha_j\}\sum_{i=0}^N s_i(\x) \d\x-\sum_{i=0}^N \alpha_i \int_\O s_i(\x) \d\x\\
  \nonumber
  =& \sum_{i=0}^N\int_\O \left(  \min\limits_j\{F_j(\x)+\alpha_j\} - \alpha_i \right) s_i(\x)  \d\x 
  \leq
  \int_\O \bF\cdot \bs\, \d\x =\mathcal F(\bs).
  \end{align*}
 In particular $J$ is bounded from above and 
 \begin{equation}\label{evident}
\sup\limits_{\balpha \in\R^{N+1}} J(\balpha) \leq \min\limits_{\bs\in \bXx\cap\bAa}\mathcal F (\bs).
 \end{equation}
Since $\int \omega \d\x=\sum m_i$, the function $J$ is invariant under diagonal shifts, i.e., $J(\balpha+c\mathbf{1} )=J(\balpha)$ 
for any constant $c\in \R$.
 As a consequence we can choose a maximizing sequence $\{\balpha^k\}_{k\geq 1}$ such that $\min\limits_j \alpha^k_j=0$ for all $k\geq 0$.
 Let $j(k)$ be an index such that $\alpha_{j(k)}^k=\min\limits_j \alpha^k_j=0$.
 Then, since $\balpha^k$ is maximizing {and $\omega(\x)\geq 0$}, we get, for $k$ large enough,
 \begin{align*}
  \sup J-1\leq& \;  J(\balpha^k)=\int_\O \min\limits_j\{F_j(\x)+\alpha_j^k\}\omega(\x)\d\x-\sum \alpha_i^k m_i\\
  \leq& \int_\O \big(F_{j(k)}(\x)+\underbrace{\alpha^k_{j(k)}}_{=0}\big)\omega(\x)\d\x-\sum \alpha_i^k m_i
  \leq \|\bF\|_{L^\infty}\|\omega\|_{L^1}-\sum \alpha_i^k m_i.
 \end{align*}
Thus $\sum \alpha^k_i m_i\leq C$, and since $\alpha^k_i\geq 0$ and $m_i>0$ we {deduce} that $\left(\balpha^k\right)_k$ is bounded.
Hence, up to extraction of an unlabelled subsequence, we can assume that $\balpha^k$ converges towards some $\ov \balpha \in \R^{N+1}_+$. 
The map $J$ is continuous, hence $\ov \balpha$ is a maximizer.

\medskip
Let us now focus on property (ii). 
Note from \eqref{evident} and (i) it suffices to prove the reverse inequality
$$
\max\limits_{\balpha \in\R^{N+1}} J(\balpha)\geq \min \limits_{\bs\in\bXx\cap\bAa}\mathcal F (\bs).
$$
We show below that, for any maximizer $\ov \balpha$ of $J$, we can always construct a suitable $\bs\in \bXx\cap\bAa$ such that $\mathcal F(\bs)=J(\ov \balpha)$.
This {will immediately imply the reverse inequality} and thus our claim (ii).
In order to do so, we first observe that $J$ is concave, thus the optimality condition at $\ov \balpha$ can be written in terms of superdifferentials as $\mathbf{0}_{\R^{N+1}}\in \partial J(\ov \balpha)$.
Denoting by 
$$\Lambda(\balpha)=\int_\O 
\lambda_{\balpha} \omega \d\x=\int_\O \min\limits_j\{F_j(x)+\alpha_j\}\omega(\x) \d\x$$ 
the first contribution in $J$, this optimality can be recast as
\begin{equation}
\label{eq:m_in_superdiff}
\boldsymbol{m}\in \partial \Lambda(\ov \balpha).
\end{equation}
For fixed $\x \in \O$ and by usual properties of the $\min$ function, the superdifferential $\p \lambda_{\balpha}(\x)$ of {the concave map} $\balpha \mapsto \lambda_{\balpha}(\x)$ at  $\balpha \in \R^{N+1}$ is characterized by 
$$
\p \lambda_{\balpha}(\x) = \left\{ 
\boldsymbol \theta \in \R_+^{N+1} \; \middle| \;   \sum_{i= 0}^N \theta_i = 1, \;\text{and}\;
\theta_i = 0 \; \text{if}\; F_i(\x) + \alpha_i > \lambda_{\balpha}(\x)
\right\}.
$$
Therefore, it follows from the extension of the formula of differentiation under the integral to the non-smooth case (cf.  \cite[Theorem 2.7.2]{clarke}) that 
\be\label{eq:clarke}
\partial \Lambda(\balpha)= \left\{
\boldsymbol w \in \R_+^{N+1} \;\middle| \;
\boldsymbol w = \int_\O \boldsymbol \theta(\x) \omega(\x) \d\x \; \text{with}\; \boldsymbol \theta(\x) \in \p \lambda_{\balpha}(\x)\;\text{a.e. in}\; \O 
\right\}.
\ee
The optimality criterion \eqref{eq:m_in_superdiff} at {any maximizer }$\ov \balpha$ gives the existence 
of some function $\boldsymbol{\theta}$ as in \eqref{eq:clarke} such that
$$
m_i=\int_\O \theta_i(\x)\omega(\x) \d\x, \qquad \forall i \in \{0, \dots, N\}.
$$
Defining 
\be\label{eq:s-theta}
s_i(\x):=\theta_i(\x)\omega(\x), \qquad \forall i \in \{0,\dots, N\},
\ee
 we have by construction that $s_i\geq 0$, $\int s_i=m_i$, and $\sum s_i=(\sum_i\theta_i)\omega=\omega$ a.e, thus $\bs \in \bXx\cap\bAa$.
Exploiting again $\sum s_i=\omega$ as well as the crucial property that  $\theta_i=0$ a.e. in $\{\x \; | \; F_i+\ov \alpha_i>\lambda_{\ov \balpha}\}$, or in other words that $F_i+\ov \alpha_i=\lambda_{\ov \balpha}$ for $\d s_i$-a.e $\x\in\O$, we get
\begin{multline*}
J(\ov \balpha)=\int_\O \lambda_{\ov \balpha}\omega \d\x - \sum_{i=0}^N \ov \alpha_im_i
=\sum_{i=0}^N \int_\O \lambda_{\ov \balpha} s_i \d\x - \sum_{i=0}^N \ov \alpha_im_i\\
=\sum_{i=0}^N \int_\O (F_i+\ov \alpha_i) s_i \d\x - \sum_{i=0}^N \ov \alpha_im_i =\mathcal F(\bs)
\end{multline*}
as claimed. Therefore $\bs$ constructed by~\eqref{eq:s-theta} is a minimizer of $\Ff$ and 
\be\label{eq:min-max}
J(\ov \balpha) = \Ff( \un \bs).
\ee

\medskip
In order to finally retrieve the desired decomposition, {choose any minimizer} $\un \bs \in \bXx\cap \bAa$ of $\Ff$ and 
{any} maximizer $\ov \balpha \in \R^{N+1}$ of $J$.
Then it follows from~\eqref{eq:min-max} that 
$$
0 =  \Ff(\un \bs) - J(\ov \balpha) = \sum_{i=0}^N \int_\O F_i \un s_i \d\x - \int_\O \lambda_{\ov \balpha} \omega \d\x 
+ \sum_{i=0}^N \ov \alpha_i m_i. 
$$
Using once again that $\int \un s_i = m_i$ and $\sum_i \un s_i = \omega$, we get that 
$$
\sum_{i=0}^N \int_\O \left(F_i + \ov \alpha_i - \lambda_{\ov \balpha}\right) \un s_i \d\x = 0. 
$$
By definition of $ \lambda_{\ov \balpha} $ the above integrand is nonnegative, hence $F_i + \ov \alpha_i = \lambda_{\ov \balpha}$ a.e. in $\{ \un s_i >0 \}$.
\end{proof}

\begin{rem}\label{bath}
To understand the dual problem one chan think the function $F_i$ as $N+1$ bathub that can be translated vertically. 
The translation of each bathtub is given by $\balpha_i$. 
Once these translations are given one just wants to fill the bathubs starting from the bottom (that is $\lambda_{\balpha}$),
while satisfying the global saturation and mass constraints. 
For an optimal translation vector $\balpha$, each phase $i$ contributes at $\x$ with a ratio $\theta_i(\x)$ as in \eqref{eq:s-theta}. 
\end{rem} 

\subsection*{Acknowledgements}
This project was supported by the ANR GEOPOR project (ANR-13-JS01-0007-01). 
CC also aknowledges the support of Labex CEMPI (ANR-11-LABX-0007-01).
TOG was supported by the ANR ISOTACE project (ANR-12-MONU-0013).
LM was supported by the Portuguese Science Fundation through FCT fellowship
SFRH/BPD/88207/2012 and the UT Austin | Portugal CoLab project.
Part of this work was carried out during the stay of CC and TOG at 
CAMGSD, Instituto Superior T\'ecnico, Universidade de Lisboa. 
The authors whish to thank Quentin M\'erigot for fruitful discussion.

\def\cprime{$'$}

\end{document}